\documentclass[12pt]{article}

\usepackage{amsmath,amscd,amsfonts,amsthm, amssymb, graphicx, mathdots, xypic}
\input{xy}
\usepackage{color}
\usepackage{cleveref}


\newcommand{\shrinkmargins}[1]{
  \addtolength{\textheight}{#1\topmargin}
  \addtolength{\textheight}{#1\topmargin}
  \addtolength{\textwidth}{#1\oddsidemargin}
  \addtolength{\textwidth}{#1\evensidemargin}
  \addtolength{\topmargin}{-#1\topmargin}
  \addtolength{\oddsidemargin}{-#1\oddsidemargin}
  \addtolength{\evensidemargin}{-#1\evensidemargin}
  }

\shrinkmargins{.7}

\DeclareMathOperator{\rk}{rk}
\DeclareMathOperator{\et}{et}

\DeclareMathOperator{\Conf}{Conf}

\DeclareMathOperator{\CHur}{CHur}

\DeclareMathOperator{\Hur}{Hur}

\DeclareMathOperator{\Hom}{Hom}

\DeclareMathOperator{\SL}{SL}
\DeclareMathOperator{\Sub}{Sub}

\DeclareMathOperator{\GL}{GL}
\DeclareMathOperator{\PSL}{PSL}

\DeclareMathOperator{\Spec}{Spec}

\DeclareMathOperator{\Sym}{Sym}

\DeclareMathOperator{\Gal}{Gal}
\DeclareMathOperator{\Frob}{Frob}

\DeclareMathOperator{\Aut}{Aut}

\DeclareMathOperator{\gr}{gr}
\DeclareMathOperator{\br}{br}
\DeclareMathOperator{\tr}{tr}
\DeclareMathOperator{\cok}{cok}
\DeclareMathOperator{\ind}{ind}

\DeclareMathOperator{\Tor}{Tor}

\DeclareMathOperator{\Ext}{Ext}

\DeclareMathOperator{\Ni}{Ni}

\newcommand{\UU} {\mathcal{U}}

\newcommand{\Hn}{\mathsf{Hn}}
\newcommand{\co}{co\,}

\newcommand{\field}[1]{\mathbb{#1}}
\newcommand{\eps}{\epsilon}

\newcommand{\Q}{\field{Q}}

\newcommand{\Zhat}{\hat{\field{Z}}}
\newcommand{\Z}{\field{Z}}
\newcommand{\A}{\field{A}}

\newcommand{\F}{\field{F}}

\newcommand{\Fqbar}{\bar{\field{F}}_q}
\newcommand{\R}{\field{R}}
\newcommand{\C}{\field{C}}

\newcommand{\YD}{\mathcal{YD}}

\newcommand{\Ahat}{\hat{A}}

\newcommand{\HHH}{\mathbb{H}}
\renewcommand{\P}{\field{P}}

\newcommand{\KK} {\mathcal{K}}
\newcommand{\LL} {\mathcal{L}}
\newcommand{\XX} {\mathcal{X}}
\newcommand{\ra}{\rightarrow}

\newcommand{\OO}{\mathcal{O}}

\newcommand{\mA}{\mathfrak{A}}
\newcommand{\grA}{\mathfrak{A}^{\gr}}
\newcommand{\mB}{\mathfrak{B}}

\newcommand{\mC}{\mathfrak{C}}

\newcommand{\mD}{\mathfrak{D}}
\newcommand{\mE}{\mathfrak{E}}

\newcommand{\mS}{\mathfrak{S}}

\newcommand{\id}{\mbox{id}}
\newcommand{\im}{{\rm im \,}}

\newcommand{\mat}[4]{\left[\begin{array}{cc}#1 & #2 \\
                                         #3 & #4\end{array}\right]}

\newcommand{\inj}{\hookrightarrow}
\newcommand{\tensor} {\otimes}

\newcommand{\bs}{\backslash}

\newcommand{\tautilde}{\widetilde{\tau}}

\newcommand{\beq}{\begin{displaymath}}
\newcommand{\eeq}{\end{displaymath}}
\newcommand{\beqn}{\begin{equation}}
\newcommand{\eeqn}{\end{equation}}

\newcommand{\nth}{\textsuperscript{th}}
\newcommand{\st}{\textsuperscript{st}}

\theoremstyle{plain}
\newtheorem{thm}{Theorem}[section]
\newtheorem{prop}[thm]{Proposition}
\newtheorem{cor}[thm]{Corollary}
\newtheorem{lem}[thm]{Lemma}

\theoremstyle{definition}
\newtheorem{defn}[thm]{Definition}
\newtheorem{conj}[thm]{Conjecture}
\newtheorem{hyp}[thm]{Hypothesis}
\newtheorem{exmp}[thm]{Example}

\theoremstyle{remark}
\newtheorem{rem}[thm]{Remark}

\title{Fox-Neuwirth-Fuks cells, quantum shuffle algebras, and Malle's conjecture for function fields}
\author{Jordan S. Ellenberg, TriThang Tran and Craig Westerland}

\begin{document}
\bibliographystyle{amsalpha}

\maketitle

\begin{abstract}

The purpose of this paper is to prove the upper bound in the weak Malle conjecture on the distribution of finite extensions of $\F_q(t)$ with specified Galois group.  As in \cite{evw}, our result is based upon computations of the homology of braid groups with certain coefficients.  However, the approach in this paper is new, relying on a connection between the cohomology of Hurwitz spaces and the cohomology of quantum shuffle algebras and Nichols algebras.
\end{abstract}

\section{Introduction}
\label{intro}

The fundamental objects of algebraic number theory are global fields, which fall into two types:  number fields (finite algebraic extensions of $\Q)$ and function fields (finite algebraic extensions of the field of rational functions $\F_q(t)$ over some finite field $\F_q$).  If $K$ is a global field and $L/K$ a finite extension of $L$ of degree $m$, the two basic invariants of $L/K$ are the {\em Galois group} $\Gal(L/K)$, a transitive subgroup of $S_m$, and the {\em discriminant} $\Delta_{L/K}$, a real number measuring the extent to which $L/K$ is ramified.  An old result of Hermite guarantees that the number of isomorphism classes of degree-$m$ extensions of $K$ with Galois group $G$ and having $|\Delta_{L/K}| < X$ is finite; it is thus natural to ask how {\em many} isomorphism classes of such extensions there are.  We denote this number by $N_G(K,X)$. 

In 2002, Malle organized much of the previous work on this problem into a single governing conjecture.  If $g$ is an element of $S_m$, we define the \emph{index} of $g$ to be 
$$\ind(g) = m - \# (\{1, \dots, m\} / \langle g \rangle).$$
We define $a(G)$ to be $[\min_{G \setminus \{1 \}} \ind(g)]^{-1}$.  For instance, if $G = S_m$, the minimal index is $1$, realized by transpositions, and so $a(S_m) = 1$.

\begin{conj}[\cite{malle02}]  Let $K$ be a number field.  For any $\eps > 0$, there exist constants $c_1(K,G)$ and $c_2(K,G,\eps)$ such that
\beq
c_1(K,G) X^{a(G)} \leq N_G(K,X) \leq c_2(K,G,\eps) X^{a(G) + \eps}.
\eeq
\label{co:weakmalle}
\end{conj}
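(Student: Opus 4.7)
The plan is to treat the two inequalities in \Cref{co:weakmalle} separately, since their natures are quite different. For the lower bound $N_G(K,X) \geq c_1(K,G) X^{a(G)}$, I would fix an element $g \in G \setminus \{1\}$ attaining the minimum $\ind(g) = a(G)^{-1}$, and construct explicit $G$-extensions of $K$ whose ramification at each of $n$ auxiliary primes is generated by a conjugate of $g$. Each such prime $\mathfrak{p}$ contributes $N\mathfrak{p}^{\ind(g)}$ to the discriminant, so choosing $\sim \log X / \log\log X$ primes of norm $\sim \log X$ yields the desired $\gg X^{a(G)}$ count. When $G$ is abelian, class field theory supplies the extensions directly. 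For non-abelian $G$ the construction is more delicate: starting from a single $G$-extension $L_0/K$ (whose existence is a separate standing input, provable for the $G$'s where the inverse Galois problem is known), one twists by characters of subgroups of $G$ and invokes embedding theorems of Shafarevich type to realize the prescribed local ramification pattern.

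For the upper bound $N_G(K,X) \leq c_2(K,G,\eps) X^{a(G)+\eps}$, the strategy would mirror the Hurwitz-space framework that this paper develops over $\F_q(t)$. A $G$-extension $L/K$ with $n$ ramified primes and bounded discriminant corresponds, after choosing an integral model, to an $\OO_K$-point of a Hurwitz-type arithmetic moduli space $\Hur_{G,n}/\OO_K$ parametrizing $G$-covers with $n$ branch points. Over $\F_q(t)$, Lefschetz converts a cohomological bound on $\Hur_{G,n}$ --- which this paper derives from the homology of a quantum shuffle algebra --- into a point count. Over a number field, I would try to obtain the same conclusion by combining (i) a geometry-of-numbers count of integral points on a fundamental domain for an expanding region in $\Hur_{G,n}(\OO_K)$, (ii) a sieve that enforces connectedness of the cover and the full Galois group $G$, and (iii) a summation over $n \leq \log X$ controlled by the same Betti-number estimates that drive the function-field case. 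The exponent $a(G)$ should emerge as the weighted volume growth of $\Hur_{G,n}$ under the discriminant filtration, with the $X^\eps$ absorbing sieve error.

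The principal obstacle --- and the reason \Cref{co:weakmalle} remains open in its stated generality --- is precisely the uniform sieve step in the upper bound: no method is currently known that controls the tail of heavily-ramified covers with sufficient uniformity in $G$. This is in stark contrast to the function-field setting, where Lefschetz and the Weil conjectures convert cohomological bounds directly into counts without any sieving, and where the homological input this paper constructs from quantum shuffle algebras suffices on its own. A realistic intermediate target is therefore to complete the function-field case first, as this paper does, in the hope that the resulting sharp bounds on $\dim H^*(\Hur_{G,n})$ will eventually be transferable to number fields --- whether by a large-sieve argument anchored on Hurwitz spaces, or by a cohomological input via Poitou--Tate duality --- and thus recover \Cref{co:weakmalle} for broad classes of $G$. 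I expect the lower bound to be within reach for most $G$, while the uniform upper bound over number fields is the true technical heart of the conjecture.
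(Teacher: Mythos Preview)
The statement you are attempting to prove is \Cref{co:weakmalle}, which is a \emph{conjecture} in the paper, not a theorem: the paper does not contain a proof of it, nor does it claim one exists. The paper's actual result is Theorem~\ref{malle_intro0} (and more generally Theorem~\ref{th:mainmalle}), which establishes only the upper bound, only over $\F_q(t)$, and only for $q$ exceeding a constant $Q(G)$ depending on $G$. There is therefore no paper proof to compare your proposal against.

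Your proposal is itself a heuristic plan rather than a proof, and you acknowledge as much when you write that the conjecture ``remains open in its stated generality.'' A few concrete points on the sketch. For the lower bound, your construction presupposes the existence of at least one $G$-extension of $K$, which is the inverse Galois problem for $G$ over $K$; this is open for general $G$, so even the lower bound in \Cref{co:weakmalle} is not known unconditionally (the paper flags this in the paragraph following Theorem~\ref{malle_intro0}). For the upper bound over number fields, your proposed ``geometry-of-numbers on $\Hur_{G,n}(\OO_K)$'' has no established meaning: Hurwitz schemes over $\Spec \OO_K$ do not carry the lattice-point structure that geometry of numbers requires, and there is at present no mechanism playing the role that Grothendieck--Lefschetz plus Deligne's bounds play over $\F_q$. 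This is not a gap you can close by sieving; it is the absence of the entire counting apparatus. Your final paragraph correctly identifies this as the essential obstruction.
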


Later, in \cite{malle}, Malle proposed a more refined conjecture, that
\beq
N_G(K,X) \sim c(K,G) X^{a(G)} \log X^{b(K,G)-1}
\eeq
for a specified constant $b(K,G) \in \Z_{\geq 1}$.  We refer to Conjecture~\ref{co:weakmalle} as the {\em weak Malle conjecture} and the more refined version as the {\em strong Malle conjecture}.

Malle stated his conjecture only for extensions of a number field $K$, but it makes perfect sense when $K$ is a general global field, and nowadays ``Malle's conjecture" is commonly understood to have this scope.  We will assume, {\em here and for the rest of this paper}, that $K$ is a field whose characteristic is prime to $\#G$.   Malle's conjecture still makes sense without this tameness restriction, but we have somewhat less certainty about its correctness.  

Malle's conjecture has been proved in many special cases.  The strong Malle conjecture holds for abelian Galois groups \cite{wright}; for transitive permutation groups of degree at most $4$, it is either proven or strongly supported by numerical evidence in each case~\cite{cohen}.  The strong conjecture (over number fields or function fields) is known to be not quite correct as formulated here with respect to the power of $\log(X)$; Kl\"{u}ners gave a counterexample in \cite{klunerscounter}, and T\"{u}rkelli proposed a modified conjecture with a slightly different value of $b(K,G)$ in \cite{turkelli:malle}; note that these counterexamples do not contradict the weak Malle conjecture.  In \cite{evw}, strong Malle was shown to hold up to multiplicative constants over $\F_q(t)$ when $G$ is a generalized dihedral group of order prime to $q$; this is the case that's relevant to the Cohen-Lenstra conjectures over function fields.\footnote{More precisely, \cite{evw} addresses the case where the local monodromy consists entirely of involutions.}  The theorem of Davenport-Heilbronn \cite{dh} shows that strong Malle holds for $S_3$-extensions of $\Q$, and the later work of Datskovsky and Wright~\cite{datskovsky} extends this not only to $\F_q(t)$ but to an arbitrary global field of characteristic prime to $6$.  The work of Bhargava and his collaborators \cite{bhargava-4, bhargava-5,bhargava15} show that strong Malle holds for the groups $S_4$ and $S_5$ for arbitrary global fields of characteristic other than $2$, and even explicitly computes the constant $c(K,G)$.  There is recent work of Koymans and Pagano in the case where $G$ is nilpotent~\cite{koymanspagano21}, and of Wang and collaborators where $G$ is a direct product of a small finite group with an abelian group~\cite{jiuyacompositio},\cite{mttw}.  See \cite[\S 10.4,10.5]{wood:survey} for a fuller description of what is currently known about Malle's conjecture and its variants.  For general $G$, however, very little is known, beyond some upper and lower bounds for the case $G=S_m$ which are very far from Malle's prediction~(\cite{evcounting},\cite{bhargavalowerbounds},\cite{rlothorne}.) 

The goal of the present paper is to prove that the upper bound in the weak Malle conjecture over $\F_q(t)$ holds for all choices of $G$ and all $q$ sufficiently large relative to $G$.

\begin{thm} \label{malle_intro0}  For each integer $m$ and each transitive $G \leq S_m$, there are constants $C(G), Q(G)$, and $e(G)$ such that, for all $q>Q(G)$ coprime to $\#G$ and all $X>0$, 
$$N_G(\F_q(t),X) \leq C(G) X^{a(G)} \log(X)^{e(G)}$$
\end{thm}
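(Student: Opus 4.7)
The plan has three stages. First, translate the counting of $G$-extensions into a sum of point counts on Hurwitz schemes, with the discriminant tracked by the Riemann--Hurwitz formula. Second, bound each Hurwitz-space point count in terms of $\ell$-adic Betti numbers via Grothendieck--Lefschetz and Deligne's Weil~II estimates. Third --- and this is where the new input enters --- bound those Betti numbers polynomially in $n$ by identifying the relevant cohomology with graded pieces of the quantum shuffle algebra attached to a Yetter--Drinfeld module over $G$.

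\emph{Stage 1.} A geometrically connected degree-$m$ extension $L/\F_q(t)$ with Galois group $G \le S_m$ is the same as a geometrically connected $G$-cover of $\P^1_{\F_q}$. Under the tameness hypothesis $\car\F_q \nmid |G|$, the covers with $n$ branch points and local inertia in prescribed nontrivial conjugacy classes $\vec c=(c_1,\dots,c_n)$ of $G$ are parametrized by a smooth Hurwitz scheme $\Hur^G_{\vec c}$ of dimension $n$, and $\deg \Delta_{L/\F_q(t)} = \sum_i \ind(c_i)$. Thus
$$N_G(\F_q(t),X)\ \leq\ \sum_{\vec c\,:\,\sum_i\ind(c_i)\,\leq\,\log_q X}\ \bigl|\Hur^G_{\vec c}(\F_q)\bigr|,$$
after a harmless accounting of automorphisms and of the point at $\infty$. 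The number of admissible $\vec c$ in this sum grows only polynomially in $\log_q X$.

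\emph{Stage 2.} Grothendieck--Lefschetz plus Deligne gives $\bigl|\Hur^G_{\vec c}(\F_q)\bigr| \leq q^n \sum_i \dim H^i(\Hur^G_{\vec c,\Fqbar},\Ql) \cdot q^{-i/2}$. By definition of $a(G)$, each nontrivial $c_i$ satisfies $\ind(c_i) \geq 1/a(G)$, so $n \leq a(G) \sum_i \ind(c_i) \leq a(G)\log_q X$, whence $q^n \leq X^{a(G)}$. This is precisely how the Malle exponent appears, and the inequality is tight exactly when every $c_i$ achieves the minimum index. To finish we need a polynomial-in-$n$ bound on $\sum_i \dim H^i(\Hur^G_{\vec c,\Fqbar},\Ql)$, uniform in $\vec c$ and valid once $q \geq Q(G)$.

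\emph{Stage 3 (main obstacle).} The Hurwitz scheme $\Hur^G_{\vec c}$ is a finite étale cover of $\Conf_n(\A^1)$ whose monodromy is the Hurwitz action of $B_n$ on tuples in the Nielsen set, i.e.\ on the weight-$\vec c$ component of the $n$-fold tensor power of the Yetter--Drinfeld module $V_G = \bigoplus_{g\neq 1} \Q\cdot g$ over $G$. Consequently $H^*(\Hur^G_{\vec c,\Fqbar},\Ql)$ is computed by a graded piece of the braid group cohomology $H^*(B_n;V_G^{\otimes n})$, and standard braided-algebra yoga identifies these braid cohomology groups with graded pieces of a bar-type complex for the quantum shuffle algebra $\sA(V_G)$. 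The needed polynomial-in-$n$ control of their dimensions will then follow from structural estimates on $\sA(V_G)$ --- essentially, polynomial bounds on the Hilbert series of the primitives in each weight --- after comparing étale cohomology in characteristic $p$ with singular cohomology in characteristic zero (valid when $q$ is large compared with the weights that can appear in $\vec c$, which is the source of $Q(G)$). Establishing these Hilbert-series bounds for an arbitrary finite group $G$, with no hypothesis like the ``coefficients concentrated on involutions'' assumption that powered \cite{evw}, is the genuinely new technical work; given that bound, summing over $\vec c$ and absorbing the polynomial count of admissible tuples into the logarithmic factor yields the theorem with the advertised $(\log X)^{e(G)}$ correction.
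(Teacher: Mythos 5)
Your Stages 1 and 2 follow the paper's route (reduction to point counts on Hurwitz schemes, Grothendieck--Lefschetz plus Deligne, and the observation that $n \leq a(G)\log_q X$ produces the exponent $X^{a(G)}$). The problem is in Stage 3, where you assert that what remains is ``a polynomial-in-$n$ bound on $\sum_i \dim H^i(\Hur^G_{\vec c,\Fqbar},\Ql)$.'' No such bound is proved in the paper, and it is not what the argument needs. A polynomial bound on the \emph{total} Betti numbers is essentially Hypothesis~\ref{intermediate_hyp} (intermediate growth of $\Ext_{\mA(V_\epsilon)}(k,k)$), which the paper explicitly cannot establish in general --- indeed, if it held one could take $Q(G)=1$. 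What is actually proved (Corollary~\ref{we_win_cor} feeding into Theorem~\ref{weak_counting_thm}) is the bifurcated bound $\rk H_j(\Hur^c_{G,n}) \leq r(n)\, g_j$, where $r(n) = \rk \Sym_{\br}(V)_n$ is polynomial in $n$ of some degree $d$ but $g_j$ is only controlled \emph{exponentially} in the cohomological degree $j$ (via the minimal model $(T_RW,d)$ for the Nichols algebra cobar complex, the Koszul complex, and the Fried--V\"{o}lklein-type vanishing). Since $j$ ranges up to $2n$, the total Betti number may well grow exponentially in $n$; the theorem is rescued only because Deligne's weight bounds contribute a factor $q^{-j/2}$ that absorbs $g_j \sim e^{Kj}$ once $q > e^{2K}$.

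This also means your explanation of the source of $Q(G)$ is wrong: it does not come from comparing \'{e}tale and singular cohomology (that comparison holds for any $q$ prime to $|G|$, via \cite[Prop.~7.7]{evw}), but precisely from the requirement $q^{1/2} > e^{K}$ needed so that the geometric series $\sum_j g_j q^{-j/2}$ converges to something bounded independently of $n$. As written, your Stage 3 would stall: you would be trying to prove a statement (polynomial total Betti numbers) that is strictly stronger than what the quantum shuffle / Nichols algebra machinery delivers, and you would have no mechanism generating the hypothesis $q > Q(G)$ that your theorem statement requires.
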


More generally, we prove in Theorem~\ref{th:mainmalle} an upper bound for the number of extensions of $\F_q(t)$ with a given Galois group and conditions on the local monodromy at the ramified places of $\F_q(t)$.  We note that any nontrivial {\em lower} bound for $N_G(\F_q(t),X)$ for general $G$ would give a positive answer to the inverse Galois problem for $\F_q(t)$, in a strong quantitative sense; this result is beyond the reach of our techniques for now, apart from a few cases outlined in \S 8 where circumstances are especially favorable.  

The exponent $e(G)$ in Theorem~\ref{malle_intro0} may be taken to be $d-1$, where $d$ is the Gelfand-Kirillov dimension of a certain graded ring $R$: this ring may be presented either as a ring of components of the family of Hurwitz spaces described below, or as a free braided commutative algebra on a braided vector space $V$ determined by $G$.  The value of $e(G)$ provided by our theorem is obtainable by a finite combinatorial computation on $G$; in any event, it is bounded above (typically very non-sharply) by $|G|-1$.

The proof of Theorem~\ref{malle_intro0} involves a fairly broad range of techniques.  On the one hand, we address the counting questions at hand by relating them to counting points on certain moduli spaces over finite fields, which involves the arithmetic geometry of moduli spaces and their etale cohomology.  In turn, we understand this etale cohomology by relating it to the singular cohomology of analogous topological spaces that can be studied by the machinery of homotopy theory.  So far, this combination of arithmetic and topological methods is very much in the spirit of (and in some respects identical to) what happens in the first and third author's earlier work with Venkatesh in \cite{evw}, concerning problems of Cohen-Lenstra type over function fields.  In \cite{evw} our approach to the topological problems was to prove homological stability theorems in the style of Quillen and Harer.  In a very recent preprint~\cite{bianchimiller}, Bianchi and Miller prove a homological stability result for Hurwitz spaces under conditions much less restrictive than those required in \cite{evw}; however, the range of stability is not sufficiently large to obtain arithmetic applications.

The new ingredient in the present work, which makes up the bulk of the paper's length, is to instead return from topology to algebra, viewing the cohomology of spaces relevant to the question as Ext algebras of quantum shuffle algebras and Nichols algebras.  We now explain this in a little more depth, separating the sketches into the portion concerning arithmetic geometry and the portion concerning topology and quantum algebra. 

\subsection{Sketch of proof:  arithmetic geometry}

When $K$ is the function field of a smooth curve $Y/\F_q$, a degree-$m$ extension $L/K$ is the function field of a connected curve $\Sigma$ over $\F_q$ which is presented as a degree $m$ branched cover $f: \Sigma \to Y$.  The discriminant of the cover $\Sigma \ra Y$ is $q^r$, where $r$ is the degree of ramification of $f$.  There is a slight and unavoidable mismatch here between the conventional function field and number field notation, so we pause to establish the notation we will use. The discriminant of a cover of curves takes into account ramification at all places of the base field, while the discriminant of a number field extension measures ramification only at the non-archimedean places. In the present paper, we will be studying covers of $K = \F_q(t)$, which is the function field of $Y = \P^1$.  By analogy with the number field case, we define the discriminant of an extension $L/K$ to be the discriminant of the extension of rings $\OO_L/\F_q[t]$, where $\OO_L$ denotes the integral closure of $\F_q[t]$ in $L$.  Equivalently, the discriminant is $q^r$, where $r$ is the degree of ramification of $L/K$ considered as a cover of the {\em affine} line $\A^1 = \P^1 - \infty$.

In this geometric setting, by contrast with the original arithmetic conjecture, the number $N_G(K;X)$ may be identified with the cardinality of the set of $\F_q$-points of a moduli scheme; namely, a {\em Hurwitz scheme} parametrizing branched covers of $\A^1$.  (See Section~\ref{hur_rack_section} for definitions and references.)  We let $\Hn_{G,\Delta = r}$ denote the Hurwitz moduli stack (defined over $\Z[\frac{1}{\#G}]$) of geometrically connected\footnote{The case of covers that are connected but not geometrically connected is an annoying issue that turns out to be harmless in this context; see the last paragraph of the proof of Theorem \ref{th:mainmalle}.} branched covers of $\A^1$ with ramification degree $r$ and Galois group $G$.  Theorem~\ref{malle_intro0} then amounts to an upper bound for the cardinality of $\Hn_{G,\Delta = r}(\F_q)$ for $r \leq \log_q X$. 

One way to guess how this cardinality behaves (see e.g. \cite{ev}) is to adopt the heuristic that every $\F_q$-rational component of $\Hn_{G,\Delta = r}(\F_q)$ of dimension $D$ has $q^D$ elements.  Under this heuristic assumption, one can show that 

\begin{equation}
\label{evasymp}
N_G(\F_q(t);X)  \asymp X^{a(G)} \log(X)^{b(G) - 1};
\end{equation}
that is, $N_{G}(\F_q(t); X)$ is asymptotically bounded above and below by constant multiples of $X^{a(G)} \log(X)^{b(G)-1}$, where $a(G)$ and $b(G)$ are explicit constants which agree with those in the strong Malle conjecture.   (We recall that, as always, $G$ is understood to be a subgroup of $S_m$, and the constants $a(G)$ and $b(G)$ depend on this permutation representation of $G$, not only on $G$ as abstract group.)

We can refine the picture by placing conditions on local monodromy.  Every ramified point $x$ in $Y$ gives rise to a local monodromy element in $S_m$ (and indeed in $G$), defined up to conjugacy.  By analogy with topology, one thinks of this element as the permutation of the $m$ sheets of the cover induced by a small loop around the branch point.  In general, there is a purely algebraic description:  the monodromy is the image of a generator of tame inertia in the decomposition group of $\Gal(L/K)$ at $x$.  (This description applies in the number field case as well, as long as we avoid primes of characteristic dividing $\#G$.)  If $c$ is a conjugacy-invariant set of nontrivial elements of $G$, we denote by $N^c_G(K,X)$ the number of degree-$m$ extensions of $K$ with discriminant at most $X$ and all local monodromy elements contained in $c$.  For example, when $G = S_m$ and $c$ is the class of transpositions, $N^c_G(\F_q(t),X)$ counts the number of degree-$m$ covers of $\A^1$ with simple branch points -- or, in algebraic terms, with squarefree discriminant divisor.  When $c = G \setminus \{1\}$, the local monodromy condition is no constraint\footnote{We insist that $c$ not contain 1; otherwise, this amounts to allowing the cover to not actually ramify at the branch locus.} at all; i.e. $N^c_G(K,X) = N_G(K,X)$.  Our actual goal, accomplished in Theorem~\ref{th:mainmalle}, will be to bound $N^c_G(\F_q(t),X)$ for an arbitrary conjugacy-invariant subset $c$.

To this end, we replace $\Hn_{G,\Delta = r}$, with the closely related Hurwitz scheme $\Hn_{G,n}^c$, which parametrizes branched covers with $n$ branch points, Galois group $G$, and local monodromy in $c \subseteq G$; it turns out to be enough for our purposes to control the number of $\F_q$-rational points on this more classical Hurwitz space.  The Grothendieck-Lefschetz fixed point theorem allows us to enumerate $\Hn_{G,n}^c(\F_q)$ as the alternating sum of traces of Frobenius on the compactly supported \'{e}tale cohomology of $\Hn_{G,n}^c$.   Indeed, the heuristic \eqref{evasymp} records the contribution of the $H^0$, and so an argument that the heuristic is correct (or approximately correct) comes down to finding some kind of bound on the traces of Frobenius on the higher cohomology.  Using Deligne's bounds on the eigenvalues of Frobenius, it suffices to bound the ranks of these cohomology groups.  A comparison theorem, along with Poincar\'{e} duality, identifies these ranks with those of the singular homology $H_j(\Hn_{G,n}^c(\C); \Q)$.  Theorem \ref{malle_intro0} is obtained by bounding these ranks exponentially in $j$, and polynomially in $n$.

To put this another way: the Weil bounds show that the heuristic applied in \cite{ev}, that a $D$-dimensional geometrically irreducible variety over $\F_q$ has $q^D$ points, is asymptotically correct as $q$ goes to $\infty$; it follows that that Malle's conjecture for $N_G(\F_q(t);q^r)$ is correct if $q$ is allowed to go to $\infty$ with $r$ fixed.  Proving something as $r \ra \infty$ with $q$ fixed, as we do here, is substantially harder.  The effect of letting $q$ go to $\infty$ is to make the contribution of all higher \'{e}tale cohomology groups of $\Hn_{G,n}^c$ negligible, so that it suffices to understand $H^0$; in the present work, by contrast, we have to give bounds for {\em all} the cohomology groups.

\subsection{Sketch of proof:  topology and quantum algebra}

The topological Hurwitz space $\Hur_{G,n}^c$ is a stack fibering over the configuration space $\Conf_n(\C)$ of $n$ points in the plane via the forgetful map which carries a branched covering of $\C$ to its branch locus.   Such a covering corresponds to a finite set endowed with an action of  $\pi_1(\Conf_n(\C))$, which is the Artin braid group $B_n$.  In fact, this $B_n$-set is very concretely describable; the finite set is $c^{\times n}$, and the action of the braid group is the {\em Hurwitz action} in which the standard generator $\sigma_i$ of the braid group acts by the rule
\beq
\sigma_i \cdot (g_1, \ldots, g_n) = (g_1, \ldots, g_{i-1}, g_i g_{i+1} g_i^{-1}, g_i, g_{i+2}, \ldots, g_n).
\eeq
This is not exactly the same thing as the complex manifold $\Hn_{G,n}^c(\C)$ considered in the previous section; rather, as we will see, $\Hn_{G,n}^c(\C)$ is an open and closed submanifold of a quotient of $\Hur_{G,n}^c$ by a natural action of $G$; but bounding the homology of $\Hur_{G,n}^c$ will suffice for our purposes.

The homology of this larger space is then precisely the homology of the braid group with coefficients in the Hurwitz representation:
$$H_*(\Hur_{G,n}^c; \Q) = H_*(B_n, \Q [c^{\times n}]).$$ 
The bulk of this paper is devoted to developing new techniques for these and related computations.

A \emph{braided vector space} over a field $k$ is a finite rank vector space $V$ equipped with an automorphism $\sigma: V \otimes V \to V \otimes V$ which satisfies the braid equation on $V^{\otimes 3}$.  This data yields an action of $B_n$ on $V^{\otimes n}$ in which $\sigma_i$ acts as $\sigma$ on the $i\nth$ and $i+1\st$ factors.  The relevant example for Malle's conjecture is $V = kc$; here $\sigma(g \otimes h) = h \otimes g^h$ for $g, h \in c$.  The resulting $B_n$ representation on $V^{\otimes n} = k [c^{\times n}]$ is precisely the Hurwitz representation described above.

For a braided vector space $V$, the \emph{quantum shuffle algebra} $\mA(V)$ is a braided, graded Hopf algebra whose underlying coalgebra is the cofree (tensor) coalgebra on $V$.  Its multiplication is given by a shuffle product which incorporates the braiding $\sigma$; see section \ref{qsa_section} for details.  Our first main result is that the homology in all degrees of all the Hurwitz spaces is packaged in the homological algebra of a quantum shuffle algebra.

\begin{thm} \label{ext_intro_thm}

There is an isomorphism $H_j(B_n; V^{\otimes n}) \cong \Ext^{n-j, n}_{\mA(V^*_\epsilon)}(k, k)$.

\end{thm}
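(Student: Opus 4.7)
The plan is to build a chain-level identification between $H_*(B_n; V^{\otimes n})$ and a complex computing $\Ext_{\mA(V_\epsilon)}(k,k)$, via the Fox-Neuwirth-Fuks (FNF) stratification of the configuration space $\Conf_n(\C)$, which is a $K(B_n, 1)$. Recall that the FNF open strata are indexed by ordered compositions $\underline{n} = (n_1, \dots, n_r)$ of $n$: the $\underline{n}$-stratum consists of configurations whose projection to the real axis takes $r$ distinct values, with $n_i$ points lying on the $i$\nth vertical line. Each such stratum is an open cell of real dimension $n + r$, and the Hurwitz local system with fibre $V^{\otimes n}$ trivializes on it: ordering points bottom-to-top within each column, then left-to-right across columns, identifies the fibre over the $\underline{n}$-stratum with $V^{\otimes n_1} \otimes \cdots \otimes V^{\otimes n_r}$. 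Gluing these together yields an FNF cellular cochain complex computing the compactly supported cohomology $H^*_c(\Conf_n(\C); \tilde{V}^{\otimes n})$.

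The key step is computing the differential. A codimension-one degeneration of $\underline{n}$ comes from a collision of two adjacent $x$-coordinates, which merges columns $i$ and $i+1$ and shuffles their points according to the braiding. Pushing this operation through the local system, the relevant piece of the coboundary is the quantum shuffle product $V^{\otimes n_i} \otimes V^{\otimes n_{i+1}} \to V^{\otimes (n_i + n_{i+1})}$ of $\mA(V)$ in the $(i, i+1)$-slot, summed over $i$, up to signs coming from cell orientations. Absorbing these signs by replacing $V$ with the sign-twisted braided vector space $V_\epsilon$, the FNF cochain complex in internal degree $n$ is identified with the cobar complex of $\mA(V_\epsilon)$ in internal degree $n$, whose cohomology in resolution degree $r$ is $\Ext^{r, n}_{\mA(V_\epsilon)}(k, k)$.

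A grading match completes the argument: the cell of composition length $r$ sits in FNF cochain degree $n + r$, so by Poincar\'e duality on the $2n$-dimensional orbifold $\Conf_n(\C)$, the internal-weight-$n$ part of $H^{n+r}_c(\Conf_n(\C); \tilde{V}^{\otimes n})$ matches $H_{n-r}(B_n; V^{\otimes n})$. Setting $j = n - r$ produces the claimed isomorphism $H_j(B_n; V^{\otimes n}) \cong \Ext^{n-j, n}_{\mA(V_\epsilon)}(k, k)$.

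The main obstacle I anticipate is the sign and orientation bookkeeping that forces the passage from $V$ to $V_\epsilon$: one must choose compatible orientations for every FNF cell and verify that the signs appearing when two adjacent columns collide are exactly those of the $\epsilon$-twisted braiding, with no residual discrepancy. Beyond this delicate sign check, the rest is a formal comparison of complexes sharing the same underlying combinatorics of compositions of $n$, combined with Poincar\'e duality on the configuration space.
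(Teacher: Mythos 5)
Your proposal is correct and follows essentially the same route as the paper: the Fox--Neuwirth--Fuks stratification, identification of the cellular (co)chain complex with local coefficients with the (co)bar complex of $\mA(V_\epsilon)$ via the quantum shuffle product on column collisions, and Poincar\'e duality to land on $\Ext^{n-j,n}_{\mA(V_\epsilon)}(k,k)$. The paper organizes the algebraic side through the extended two-sided bar complex $B^e(I,\mA,I)$ and a dimension-shifting argument, but this is only a bookkeeping variant of your direct identification with the cobar complex.
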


Here $V^*_\epsilon$ is $V^*$ with braiding dual to that of $V$, and twisted by a sign.  In the bigrading on $\Ext$, the former index is the homological degree, and the latter the internal degree.  This result is Corollary \ref{main_cor}, below; this extends work of Fuks, Va{\u\i}n{\v{s}}te{\u\i}n, Markaryan, and Callegaro \cite{fuks, vainstein, markaryan, callegaro}.  The connection between cohomology of sheaves on configuration spaces and shuffle algebras has also recently been established by Kapranov-Schechtman in \cite{ks}.  Theorem \ref{ext_intro_thm} is proven using the Fox-Neuwirth/Fuks cellular stratification of $\Conf_n(\C)$, whose cellular cochain complex with coefficients in $V^{\otimes n}$ can be identified with the internal degree $n$ part of the bar complex for $\mA(V_\epsilon^*)$.

The quantum shuffle algebra is not an extremely well-studied object (with some notable exceptions such as \cite{rosso, dkkt, lebed}), and very little is known about its cohomology.  However, it contains an important subalgebra called the \emph{Nichols algebra}, $\mB(V_\epsilon^*)$, which plays a central role in the classification of pointed Hopf algebras, e.g., \cite{nichols, woronowicz, andruskiewitsch-schneider, as_annals}.  The Nichols algebra may be defined as the subalgebra of $\mA(V^*_\epsilon)$ generated by its degree 1 part (that is, $V^*_\epsilon$).  

These algebras also play a role in combinatorics and representation theory.  For instance, in the case that $V = kc$ is the braided vector space associated to the conjugacy class $c$ of transpositions in $S_d$, the quadratic cover of $\mB(V^*_\epsilon)$ is the Fomin-Kirillov algebra $\mathcal{E}_d$ whose structure constants encode a noncommutative extension of Schubert calculus on flag varieties \cite{fomin-kirillov}.  Alternatively, if the braiding on $V$ is given by exponentiating a Cartan matrix, then $\mB(V)$ is the Borel part of the enveloping algebra of the associated quantum group \cite{rosso}.

The Nichols algebra turns out to be much easier to work with than the full quantum shuffle algebra; and it is ``close enough" to the quantum shuffle algebra that it suffices for our purposes to control extensions over $\mB(V^*_\epsilon)$ rather than over $\mA(V^*_\epsilon).$  More precisely, we show in Theorem \ref{alg_bound_thm} that if we can control the growth of $\Ext^{n-j, n}_{\mB(V^*_\epsilon)}(k, k)$ in $j$ and $n$, we get bounds of similar quality for $\Ext^{n-j, n}_{\mA(V^*_\epsilon)}$.

Thus, we have reduced the problem of obtaining the bounds on the growth of $\# \Hn_{G,n}^c(\F_q)$ necessary to yield Theorem \ref{malle_intro0} to the purely algebraic problem of providing such a bound on the growth of the cohomology of $\mB(V^*_\epsilon)$.  This is achieved in sections \ref{nichols_coh_section} and \ref{bound_section}, which are the technical heart of the paper: we show that the cohomology of $\mB(V^*_\epsilon)$ has graded pieces whose dimensions are bounded above by those of a tensor algebra over the ring $R = \sum H_0(B_n, V^{\otimes n})$.  The generators of this tensor algebra are computed via a Koszul complex for $\mB(V_{\epsilon})$.  Generally, we can inductively compute this Koszul complex via a spectral sequence arising from the subgroup lattice of $G$.  We show that the Koszul complex for each subgroup vanishes above a certain degree using a variant on the Conway-Parker-Fried-V\"{o}lklein theorem \cite{fried-volklein}.  This is sufficient to control the growth of $\Ext_{\mB(V^*_\epsilon)}(k, k)$.

\subsection{Further questions}

Malle's conjecture is believed to hold for global fields in general, or at least those whose characteristic is prime to $|G|$.  In particular, it should apply to $G$-extensions of higher-genus function fields over $\F_q$, not just $\F_q(t)$, the case considered here.  We believe the methods of the present paper will be useful for treating the more general situation, but some new ideas would be required.  (Indeed, we would say the same about the results of \cite{evw}.)

Another interesting avenue for future research concerns the cohomology of the Nichols algebra.  In the present paper, it appears as a kind of expedient, helping us compute the thing we are actually after, the cohomology of the quantum shuffle algebra (or equivalently, as we show, the cohomology of Hurwitz spaces).

However, as we learned from the work of Kapranov and Schechtman~\cite{ks}, the cohomology of the Nichols algebra also has a direct geometric interpretation, which is likely of interest in its own right.  The Hurwitz space is a finite cover of $\Conf_n \A^1$, which gives rise to a permutation representation of the braid group, which we think of as a local system $V^{\tensor n}$ on $\Conf_n \A^1$.  Our goal in the present paper is to study $H^*(\Conf_n \A^1, V^{\tensor n})$, which is what eventually allows us to bound $\F_q$-rational points on Hurwitz spaces.  These are the cohomology groups packaged in $\Ext_{\mA(V_\eps)}(k,k)$.  

There is a natural open inclusion $j: \Conf_n \A^1 \ra \Sym_n \A^1 \cong \A^n$, given by  the inclusion of the space of monic squarefree degree $n$ polynomials into the space of all monic degree $n$ polynomials.  It turns out that what is packaged by $\Ext_{\mB(V^*_\epsilon)}(k, k)$, the cohomology of the Nichols algebra, are the cohomology groups
\beq
H^*(\Sym_n \A^1, j_{!*} V^{\tensor n})
\eeq
where $j_{!*} V^{\tensor n}$ denotes the {\em middle extension}, a perverse sheaf on $\Sym_n \A^1$.  This family of perverse sheaves and their cohomology groups (in both the topological and the \'{e}tale context) seems to us to be a natural object of study, especially given that there are cases (for instance, the case where $G=S_3$ and $c$ is the class of transpositions \cite{stefan-vay}) where the cohomology of the Nichols algebra has been computed in full, and where the groups can be shown to satisfy much better upper bounds on their growth than we expect for those of the full quantum shuffle algebra.

In the \'{e}tale context, the alternating trace of Frobenius on these cohomology groups should be expressible as a sum of some function over monic degree-$n$ polynomials
\beq
\sum_{P \in \Sym_n \A^1(\F_q)}  F(P)
\eeq
where, for {\em squarefree} polynomials $P$, the local term $F(P)$ is the number of $G$-extensions with monodromy type $c$ and discriminant $P$.  It seems very worthwhile to understand what value $F$ takes on non-squarefree polynomials, and even, though here we get speculative, whether there is an analogue of $F$ over $\Spec \Z$.  In the case of $G=S_3$ and $c$ transpositions, for instance, this would be a function $\Phi$ on $\Z$ whose value at a squarefree integer $m$ is the number of cubic extensions with discriminant $m$, but whose value at a non-squarefree $m$ was something different, arrived at by analogy with the function $F$ arising from the middle extension.  Would the Dirichlet series attached to $\Phi$ have good analytic properties?  Would $\Phi$ count things we are interested in counting?

\subsection{Acknowledgements}

We are indebted to Dev Sinha for encouraging us to approach the study of Hurwitz spaces using the Fox-Neuwirth stratification of configuration spaces.  We thank Mikhail Kapranov and Vadim Schechtman for sharing their preprint \cite{ks} with us, and Joel Specter for a lively discussion of related material, as well as alerting us to the work of Markaryan \cite{markaryan}.  Similar thanks go to S{\o}ren Galatius for pointing us to Callegaro's work \cite{callegaro}, and to Marshall Smith for allowing us to reproduce his related, unpublished computations here.  

Many thanks are due to Marcelo Aguiar, Nicol\'{a}s Andruskiewitsch, Victor Ostrik and Sarah Witherspoon for a number of very helpful conversations about Nichols algebras and their cohomology, to Jiuya Wang for conversations about Malle's conjecture, and to Will Sawin for conversations about perverse sheaves in arithmetic statistics.  

We owe special thanks to Oscar Randal-Williams for finding a flaw in our original argument for bounding the growth of the cohomology of Nichols algebras; we learned a lot from repairing the proof. 

The first author has been supported on this project by NSF Grant DMS-2001200, NSF Grant DMS-1700885, a Guggenheim Fellowship, and a Simons Fellowship. The second author was partly supported by ARC Grant DE130100650. The third author was supported by NSF Grant DMS-1406162.

\section{Algebra from braid representations}

Let $k$ be a field; unless otherwise indicated, all tensor products will be over $k$.  Many of the definitions below carry through for arbitrary commutative rings $k$.  However, at various points, we will need to split surjective maps of $k$-modules by one-sided inverses; the blanket assumption that $k$ is a field certainly ensures this.

Besides the first section below, the bulk of this material is an introduction to Yetter-Drinfeld modules and braided Hopf algebras.  None of these results are original, although we include some proofs for the unfamiliar reader, and refer them to sources such as \cite{nichols, andruskiewitsch-schneider} as well as sections 2 and 3 of \cite{bazlov} for more details.

\subsection{Monoidal braid representations and braided vector spaces}

Recall that the $n^{\rm th}$ \emph{braid group} $B_n$ is presented as 
\[ B_n := \langle \sigma_1, \dots, \sigma_{n-1} \; | \; \sigma_i \sigma_{i+1} \sigma_i =  \sigma_{i+1} \sigma_i \sigma_{i+1}, \; \sigma_i \sigma_j =\sigma_j \sigma_i, \mbox{if $|i-j|>1$} \rangle. \]
Consider the groupoid $B = \sqcup_{n \geq 0} [*/B_n]$ of all braid groups.  We may equip $B$ with the structure of a (braided) monoidal category; the tensor product $B \times B \to B$ is induced by the homomorphism $B_n \times B_m \to B_{n+m}$ that places braids side-by-side.

\begin{defn} A \emph{monoidal braid representation} is a (strictly) monoidal functor $\Phi$ from $B$ to the category of finite rank vector spaces over $k$.  \end{defn}

The family of monoidal braid representations forms a category whose morphisms are monoidal natural transformations.  Note that $\Phi$ carries the $n^{\rm th}$ object of $B$ to a vector space $\Phi(n)$.  Since $\Phi$ is monoidal, there is an isomorphism of $k$-modules $\Phi(n) \cong \Phi(1)^{\otimes n}$.

\begin{defn} A \emph{braided vector space} $V$ is a vector space over $k$ of finite rank, equipped with a \emph{braiding} $\sigma: V \otimes V \to V \otimes V$ which is invertible and satisfies the braid equation
$$(\sigma \otimes \id) \circ (\id \otimes \sigma) \circ (\sigma \otimes \id) =  (\id \otimes \sigma) \circ (\sigma \otimes \id) \circ (\id \otimes \sigma)$$
as a self-map of $V^{\otimes 3}$.

\end{defn}

Braided vector spaces form a category where morphisms between $(V, \sigma)$ and $(W, \tau)$ are $k$-linear maps $f: V \to W$ with $(f \otimes f) \circ \sigma = \tau \circ (f \otimes f)$.  Note that $B_n$ acts on $V^{\otimes n}$ via $\sigma_i \mapsto \id^{\otimes i-1} \otimes \sigma \otimes \id^{\otimes n-i-1}$.  The following is straightforward:

\begin{prop} \label{monoidal_equiv_prop} There is a pair of inverse equivalences between the categories 
$$\mbox{monoidal braid representations} \leftrightarrow \mbox{braided vector spaces}$$
carrying $\Phi$ to $\Phi(1)$ and $V$ to the braid representation on $V^{\otimes n}$ described above. \end{prop}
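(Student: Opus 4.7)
The plan is to construct explicit functors in both directions and verify their compositions are naturally isomorphic to the identity. Morally, the proposition is a linearized, strict version of the standard fact that $B$ is freely generated, as a monoidal category, by a single object together with a self-braiding of its square.

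In one direction, given $(V,\sigma)$ I would define $\Phi_V$ on objects by $\Phi_V(n) := V^{\otimes n}$, with the strict monoidal structure being the canonical identification $V^{\otimes n} \otimes V^{\otimes m} = V^{\otimes (n+m)}$. On morphisms, send each generator $\sigma_i \in B_n$ to $\id^{\otimes i-1} \otimes \sigma \otimes \id^{\otimes n-i-1}$ and extend multiplicatively. To see this descends to a representation of $B_n$, the braid relation $\sigma_i \sigma_{i+1} \sigma_i = \sigma_{i+1} \sigma_i \sigma_{i+1}$ reduces, by applying $\id^{\otimes i-1} \otimes (-) \otimes \id^{\otimes n-i-2}$, to the braid equation imposed on $\sigma$ on $V^{\otimes 3}$; the distant commutation for $|i-j|>1$ follows from the interchange law for tensor products of linear maps. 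Strict monoidality of $\Phi_V$ is built into the construction.

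In the reverse direction, given a strict monoidal $\Phi$, I would set $V := \Phi(1)$ and $\sigma := \Phi(\sigma_1)$, regarded as an endomorphism of $V^{\otimes 2}$ via the strict identification $\Phi(2) = V^{\otimes 2}$. Invertibility of $\sigma$ follows from that of $\sigma_1 \in B_2$. The braid equation for $\sigma$ on $V^{\otimes 3}$ is then the image under $\Phi$ of the braid relation in $B_3$, once one uses strict monoidality to write $\Phi(\sigma_1) = \sigma \otimes \id_V$ and $\Phi(\sigma_2) = \id_V \otimes \sigma$ on $V^{\otimes 3} = \Phi(3)$.

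Finally, for the equivalence, the composite $V \mapsto \Phi_V \mapsto \Phi_V(1) = V$ visibly returns $V$ with its original braiding. For the other composite $\Phi \mapsto \Phi(1) \mapsto \Phi_{\Phi(1)}$, the strict monoidal identifications $\Phi(n) = \Phi(1)^{\otimes n}$ furnish the components of a candidate monoidal natural isomorphism $\Phi \cong \Phi_{\Phi(1)}$. The one step that requires care, and which I regard as the main point, is checking that on morphisms this identifies $\Phi(\sigma_i)$ with $\id^{\otimes i-1} \otimes \sigma \otimes \id^{\otimes n-i-1}$ for every $i$ and $n$. This follows from the fact that $\sigma_i \in B_n$ is, by the definition of the monoidal structure on $B$, the image of $\sigma_1 \in B_2$ under the homomorphism $B_{i-1} \times B_2 \times B_{n-i-1} \to B_n$ that sends $(1, \sigma_1, 1)$ to $\sigma_i$, combined with strict monoidality of $\Phi$. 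Beyond this bookkeeping no genuine obstacle arises.
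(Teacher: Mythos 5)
Your proof is correct and is exactly the routine verification the paper has in mind: the paper offers no written proof, simply labelling the proposition ``straightforward,'' and your two functors, the reduction of the braid relations to the braid equation plus the interchange law, and the check that $\Phi(\sigma_i)$ is determined by $\Phi(\sigma_1)$ via strict monoidality constitute precisely that argument. The only detail you leave implicit is the equally routine correspondence on morphisms (a monoidal natural transformation is determined by its component at $1$, and compatibility with $\Phi(\sigma_1)$ is exactly the intertwining condition $(f\otimes f)\circ\sigma=\tau\circ(f\otimes f)$ defining morphisms of braided vector spaces).
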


\subsection{Yetter-Drinfeld modules and braided Hopf algebras} \label{YD_section}

One way of obtaining a braided vector space is as a Yetter-Drinfeld module:

\begin{defn} For a group\footnote{One may define this category for any Hopf algebra; group algebras will suffice for our purposes.} $G$, a (right, right) \emph{Yetter-Drinfeld module} $M$ is a $G$-graded right $kG$-module with the property that $M_{b} \cdot a \leq M_{b^a}$ for $a, b \in G$ and $j \in \Z$; here $b^a = a^{-1} b a \in G$.  A map of Yetter-Drinfeld modules is a $kG$-module map preserving this grading.  

A \emph{graded Yetter-Drinfeld module} is a $\Z$-graded collection of these; maps between these are collections of Yetter-Drinfeld module maps.\end{defn}

Depending upon the context, for $g \in G$ and $m \in M$, we may either write $m \cdot g$ or $m^g$ for the action of $g$ on $m$.  Further, if $n \in M_{g}$, we will write $m^n := m^g$ for the action on $m$ of the index $g \in G$ labelling $n$.

The category $\YD_G^G$ of Yetter-Drinfeld modules forms a braided monoidal category, where the braiding $\sigma:M \otimes N \to N \otimes M$ is given by the formula
$$\sigma(m \otimes n) := n \otimes m^n = n \otimes m^g$$
where $n \in N_{g}$.  Taking $N=M$ gives a braided vector space $(M, \sigma)$.

If $M$ is a Yetter-Drinfeld module, the ($G$-graded) dual vector space $M^* = \Hom_k(M, k)$ becomes a Yetter-Drinfeld module via the obvious decomposition
$$M^* \cong \bigoplus_{g \in G} (M_g)^*$$
and the right action by inverses of duals: for $\phi \in M^*$ and $g \in G$,
$$\langle \phi^g, x \rangle = \langle \phi, x^{g^{-1}} \rangle.$$
Here $\langle \phi, x \rangle := \phi(x)$ is our somewhat awkward notation for evaluation of a homomorphism on an element of $M$.

If $A$ and $B$ are Yetter-Drinfeld algebras (i.e., monoid objects in $\YD_G^G$), then their tensor product $A \otimes B$ becomes a Yetter-Drinfeld algebra using the braiding, by defining a multiplication $\star$:
$$(a \otimes b) \star (a' \otimes b') := (a \star a') \otimes (b^{a'} \star b') = (a \star a') \otimes (b^g \star b') $$ 
where $a' \in A_{g}$.

A \emph{braided, graded Hopf algebra} is a graded Hopf algebra object $A$ in a braided monoidal category; we will mostly focus on those occurring in the category $\YD^G_G$.  That is, $A$ is simultaneously a unital associative $k$-algebra (with multiplication $\star$ and unit $\eta$), and a counital coassociative coalgebra (with coproduct $\Delta$ and counit $\epsilon$), where all structure maps are maps of Yetter-Drinfeld modules.  Further, $\Delta: A \to A \otimes A$ is an algebra map (using the twisted multiplication described above).  Lastly, the antipode $\chi: A \to A$ is a Yetter-Drinfeld map with
$$\eta \epsilon(a) = \sum_i a_i' \star \chi(a_i'') = \sum_i \chi(a_i') \star a_i''$$
for $\Delta(a) = \sum a_i' \otimes a_i''$.

We will use the notation 
$$P(A) = \{a \in A \; | \; \Delta(a) = a \otimes 1 + 1 \otimes a\}$$
for the \emph{primitives} of $A$, and 
$$Q(A) =I/I^2, \; \mbox{ where } \; I = \ker(\epsilon)$$
for its \emph{indecomposables}.  A graded algebra $A$ is \emph{connected}\footnote{More carefully, $A$ is \emph{connected as a graded algebra}.  A coalgebra is said to be connected if its coradical is one dimensional.  These are closely related definitions for graded Hopf algebras (see \cite{bgz}); we will always use the notion for algebras in this paper.} if
$$A_0 = k \{ 1 \} \; \mbox{ and } \; A_n = 0 \mbox{ if } n<0.$$  

\subsection{Braided partial derivatives} \label{derivative_section}

\begin{defn} \label{deriv_defn} Let $A$ be a graded algebra and $A^*$ its graded dual.  For $w \in A$, define \emph{braided partial derivative operators} $\partial^L_w: A^* \to A^*$ and $\partial^R_w: A^* \to A^*$
by 
$$\langle \partial^L_w \phi, x \rangle = \langle \phi, xw \rangle \; \mbox{ and } \; \langle \partial^R_w \phi, x \rangle = \langle \phi, wx \rangle$$
\end{defn}

We note that these are degree-shifting operators, decreasing the degree of $\phi$ by the degree of $w$.

\begin{prop}

The operators $\partial^L_w$ define a left action of the algebra $A$ on $A^*$.  Further, if $A$ is a braided Hopf algebra in $\YD_G^G$, the action of primitives is through braided derivations: when $v \in P(A)$,
$$\partial^L_v(\phi \psi) = \partial^L_v(\phi) \psi^{v^{-1}} + \phi \partial^L_v(\psi).$$
Finally, this operation interacts with the action of $G$ in the following way:  
$$(\partial^L_v \phi)^g = \partial^L_{v^g} \phi^g.$$

\end{prop}

\begin{proof}

It is easy to see that $\partial^L_{vw} = \partial^L_v \partial^L_w$, so indeed this defines an action of $A$ on $A^*$.  To see that the action of the primitives is through derivations, we use the notation $\Delta(x) = \sum_i x_i' \otimes x_i''$ for the diagonal:
\begin{eqnarray*} 
\langle\partial^L_v(\phi \psi), x\rangle & = & \langle \phi \psi, xv \rangle \\
 & = & \langle \phi \otimes \psi, \Delta(xv)\rangle \\
 & = & \langle \phi \otimes \psi, \sum_i (x_i' \otimes x_i'') \cdot(v \otimes 1 + 1 \otimes v)\rangle \\
 & = & \langle \phi \otimes \psi, \sum x_i'v \otimes (x_i'')^v + \sum x_i' \otimes x_i''v \rangle \\
 & = & \sum \langle \phi, x_i' v \rangle \langle \psi^{v^{-1}}, x_i'' \rangle + \sum \langle \phi, x_i' \rangle \langle \psi, x_i'' v \rangle \\
 & = & \langle \partial^L_v(\phi) \psi^{v^{-1}} + \phi \partial^L_v(\psi), x \rangle.
\end{eqnarray*}

The equivariance is a similar computation:
\begin{eqnarray*} \langle (\partial^L_v \phi)^g, x \rangle & = &  \langle \partial^L_v \phi, x^{g^{-1}} \rangle  \\
 & = & \langle \phi, x^{g^{-1}}v \rangle \\
 & = & \langle \phi^{g}, x v^g \rangle \\
 & = & \langle \partial^L_{v^g} \phi^g, x \rangle.
\end{eqnarray*}
The third equality uses the fact that the $G$ action on $A$ is through algebra homomorphisms.

\end{proof}

There is a similar result for the right derivatives $\partial_w^R$: this gives a right action of $A$ on $A^*$.  While the equivariance of the construction is as for $\partial_w^L$, the Leibniz rule takes the form:
$$\partial^R_v(\phi \psi) = \partial^R_v(\phi) \psi + \phi \partial^R_{v^{\phi}}(\psi).$$
Finally, the following is a simple application of the definition of these operators:

\begin{lem} \label{diagonal_lem}

Let $A$ be a locally finite graded algebra, so that $A^*$ is a graded coalgebra.  Let $\{e_i\}$ be a basis for $A$, with dual basis $\{e_i^*\}$ for $A^*$.  Then if $\phi \in A^*$, 
$$\Delta(\phi) = \sum e_i^* \otimes \partial_{e_i}^R(\phi) = \sum \partial_{e_i}^L(\phi) \otimes e_i^*.$$

\end{lem}

\subsection{Braided vector spaces of rack type} \label{rack_section}

We recall that a \emph{rack} is a set $R$ with a binary operation $(a, b) \mapsto b^a$ with the properties that
\begin{enumerate}
\item $(c^a)^{b^a} = (c^b)^a$, and
\item For each $a$, $b \in R$, there exists a unique $c \in R$ with $c^a = b$.  We will notate $c$ as $c = {}^{a}b$.
\end{enumerate}
Further, $R$ is a \emph{quandle} if $a^a = a$ for each $a \in R$.  

\begin{exmp}

The main example of a quandle is obtained by taking $R$ to be a group, and the operation to be conjugation: $b^a = a^{-1} b a$.  The first axiom is straightforward to verify; the second is gotten from taking 
$$c = {}^{a}b = b^{(a^{-1})} = a b a^{-1}.$$
Consequently $R$ may also be taken to be a union of conjugacy classes.  Alternatively, one may define the \emph{structure group} $G(R)$ of a rack $R$: it is generated by $R$, subject to the relations $b \cdot a = a \cdot (b^a)$.

\end{exmp}

\begin{defn}

A \emph{rack 2-cocycle} $x$ with values in $k^{\times}$ is a function $x: R \times R \to k^{\times}$ with the property that
\beqn \label{cocycle_eqn} x_{ab} x_{a^b c} = x_{ac} x_{a^c b^c}.\eeqn

\end{defn}

It is indeed the case that these objects form cocycles for a cohomology theory for racks; see \cite{etingof-grana, szymik}.  We may associate a braided vector space to this data.  The following is a special case of the construction of \cite{andruskiewitsch-grana}, where this idea is far more thoroughly explored; see also \cite{heckenberger-lochmann-vendramin} for a number of very interesting examples.

\begin{defn}

If $R$ is a rack and $x$ is a 2-cocycle for $R$ with values in $k^{\times}$, define a braided vector space $V(R, x):= kR$ to be the vector space spanned by $R$, with braiding
$$\sigma(a \otimes b) = x_{ab} (b \otimes a^b)$$
for $a, b \in R$.

\end{defn}

The map $\sigma$ satisfies the braid equation: this follows from the axioms of a rack and the cocycle property (\ref{cocycle_eqn}).
An obvious choice of cocycle is to take $x_{ab} = 1$ for all $a, b$.  A slightly less obvious cocycle is given by $x_{ab} = -1$; the resulting braided vector space $V_\epsilon := V(R, -1)$ is one of the main subjects of this paper.  

We note that there is a right action of the structure group $G(R)$ on $V$ by conjugation of basis elements; for $a \in R$ and $b \in G(R)$,
$$a \cdot b = a^b.$$
Further, we may make $V$ into a Yetter-Drinfeld module for $G(R)$ by equipping $V$ with the $G(R)$-grading where $a \in V_{a}$ for $a \in R \subseteq G(R)$.

\subsection{Quantum shuffle algebras} \label{qsa_section}

Recall that a \emph{shuffle} $\tau: \{1, \dots, m\} \sqcup \{1, \dots, n\} \to \{1, \dots, m+n\}$ is a bijection which does not change the order of either $\{1, \dots, m\}$ or $\{1, \dots, n\}$.  For any such $\tau$, there is a natural choice of a lift $\tautilde \in B_{m+n}$ which is given by the braid that shuffles the endpoints according to $\tau$ by moving the right $n$ strands in front of the left $m$ strands (See Figure \ref{shuffle to braid}).
\begin{figure}
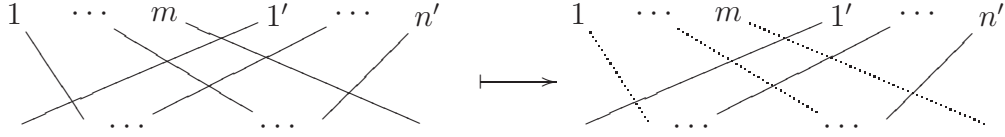

\[ \vcenter{	
	\xy
		{\ar@{-} (0,0)*+{}; (35,15)*+{1'} };
		{\ar@{-} (15,0)*+{\cdots}; (45,15)*+{\cdots} }; 
		{\ar@{-} (40,0)*+{}; (55,15)*+{n'} }; 
		{\ar@{-} (10,0)*+{}; (0,15)*+{1} }; 
		{\ar@{-} (35,0)*+{\cdots}; (10,15)*+{\cdots} }; 
		{\ar@{-} (55,0)*+{}; (20,15)*+{m} }; 
		{\ar@{|->} (62,6)*{}; (72,6)*{}};
		{\ar@{-} (75,0)*+{}; (110,15)*+{1'} };
		{\ar@{-} (90,0)*+{\cdots}; (120,15)*+{\cdots} }; 
		{\ar@{-} (115,0)*+{}; (130,15)*+{n'} }; 
		{\ar@{..} (85,0)*+{}; (75,15)*+{1} }; 
		{\ar@{..} (110,0)*+{\cdots}; (85,15)*+{\cdots} }; 
		{\ar@{..} (130,0)*+{}; (95,15)*+{m} }; 
	\endxy
	}
\]
	\caption{Lifting an $(m,n)$-shuffle to a braid.\label{shuffle to braid}}
\end{figure}

Let $(V, \sigma)$ be a braided vector space.  We will write elements of $V^{\otimes n}$ using bar complex notation; i.e., $[a_1| \dots | a_n]$.  

\begin{defn}

The \emph{quantum shuffle algebra} $\mA(V)$ is a braided, graded bialgebra: its underlying coalgebra is the tensor coalgebra $T^{co}(V)$ (i.e., with the deconcatenation coproduct $\Delta$) and has multiplication defined by the quantum shuffle product:
$$[a_1| \dots | a_m] \star [b_1 | \dots | b_n] = \sum_{\tau} \tautilde [a_1| \dots | a_m | b_1 | \dots | b_n]$$
where the sum is over shuffles $\tau: \{1, \dots, m\} \sqcup \{1, \dots, n\} \to \{1, \dots, m+n\}$.

\end{defn}

The quantum shuffle multiplication on $\mA(V)$ is associative, though this is not obvious; see, e.g., \cite{rosso, lebed, kapranov-schiffmann-vasserot}.  This makes $\mA(V)$ into a braided, graded, connected bialgebra.  It is not generally commutative and is only cocommutative when $\rk_k(V) = 1$.  In fact, $\mA(V)$ is a braided Hopf algebra, as it admits an antipode $\chi: \mA(V) \to \mA(V)$.  Following \cite{milnor_steenrod}, this is given by $\chi(1) = 1$, and in positive degrees as the unique solution to the equation
$$\sum_{j=0}^{n} [a_1| \dots | a_j] \star \chi([a_{j+1}| \dots | a_n]) = 0.$$
As noted in the definition of a braided Hopf algebra, the coproduct is only a map of algebras when $\mA(V) \otimes \mA(V)$ is given the product structure twisted by the braiding.  In general, we may describe that as follows: for each $m$ and $n$, we define a map 
$$\sigma_{m, n}: V^{\otimes m} \otimes V^{\otimes n} \to V^{\otimes n} \otimes V^{\otimes m}$$
via the action of $B_{m+n}$ on $V^{\otimes m+n}$, using the braid that moves the first $m$ strands behind the last $n$ strands (see Figure \ref{braiding}).  As $m$ and $n$ vary, the collection of these maps define the braiding on $\mA(V) \otimes \mA(V)$ which twists the multiplication.
\begin{figure}
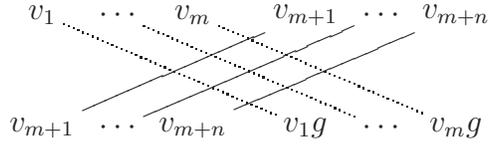

\[ \vcenter{	
	\xy
		{\ar@{-} (0,0)*+{v_{m+1}}; (35,15)*+{v_{m+1}} };
		{\ar@{-} (10,0)*+{\cdots}; (45,15)*+{\cdots} }; 
		{\ar@{-} (20,0)*+{v_{m+n}}; (55,15)*+{v_{m+n}} }; 
		{\ar@{..} (35,0)*+{v_1g}; (0,15)*+{v_1} }; 
		{\ar@{..} (45,0)*+{\cdots}; (10,15)*+{\cdots} }; 
		{\ar@{..} (55,0)*+{v_mg}; (20,15)*+{v_m} }; 
	\endxy
	}
\]
\caption{The braiding on $\mA(V) \otimes \mA(V)$ in the case when $(V,\sigma)$ is a Yetter-Drinfeld module for a group $G$. \label{braiding}} 
\end{figure}
In the case that $(V, \sigma)$ is a Yetter-Drinfeld module for a group $G$, a formula for $\sigma_{m, n}$ is straightforward:
\beqn \label{twist_eqn} \sigma_{m, n}([a_1| \dots|a_m] \otimes [b_1|\dots| b_n]) = [b_1|\dots| b_n] \otimes [a_1^g| \dots| a_m^g] \eeqn
where each $b_i \in V_{g_i}$, and $g = g_1 \dots g_n$.  We note that $\mA(V)$ is a Yetter-Drinfeld module where the grading on tensor products is by multiplication; e.g., in the above, $[b_1 | \dots |b_n] \in V_g$.

\subsection{The Nichols algebra} \label{nichols_section}

A central object of study in this paper is the Nichols algebra, to be defined below.  In Theorem \ref{ext_intro_thm}, we relate the homology of braid groups to the homology of the quantum shuffle algebras (defined in the previous section).  Except in special cases, their cohomology is quite inscrutable.  More approachable (though still somewhat mysterious) is the cohomology of a subalgebra, the Nichols algebra.  Further, we will see in Theorem \ref{alg_bound_thm} that in a certain sense, these subalgebras contribute a sort of ``dominant term" in the cohomology of the quantum shuffle algebra. 

\begin{defn}  The \emph{Nichols algebra} or \emph{quantum symmetric algebra} $\mB(V) \leq \mA(V)$ is the $k$-subalgebra generated by $V$ under the quantum shuffle product. \end{defn}

Since $V \leq \mA(V)$ is primitive, the Nichols algebra is also the sub-braided Hopf algebra of $\mA(V)$ generated by $V$.  We may also present it as a quotient of the tensor algebra $T(V)$ on $V$.  Here $T(V)$ is made into a braided Hopf algebra by declaring $V$ to be primitive.

\begin{prop} \label{nichols_characterization_prop}

The Nichols algebra may alternatively be characterized as:

\begin{enumerate}

\item The image of $T(V)$ in $\mA(V)$ under the unique Hopf algebra homomorphism which is the identity on $V$.

\item The unique braided Hopf algebra $\mB(V)$ where the natural maps
$$V \to P(\mB(V)) \to Q(\mB(V))$$
are isomorphisms.

\item The braided Hopf algebra which in degree $n$ is the quotient of $T(V)_n = V^{\otimes n}$ by the kernel of the \emph{quantum symmetrizer} $\mS_n: V^{\otimes n} \to V^{\otimes n}$: 
$$\mS_n [v_1 | \dots | v_n] = \sum_{\tau \in S_n} \tautilde [v_1 | \dots | v_n];$$
here $\tautilde \in B_n$ is the Matsumoto lift\footnote{This is the (set-theoretic) section $S_n \to B_n$ to the natural projection which expresses a permutation as a minimal length word in the transpositions $(i, i+1)$, and replaces each of these with the braid $\sigma_i$.} of $\tau \in S_n$.

\end{enumerate}

\end{prop}

\begin{proof} 

There is a map of algebras
$$\mS: T(V) \to \mA(V)$$
induced by the identity of $V$; here we use the fact that $T(V)$ is the free algebra generated by $V$.  This is also a map of coalgebras, since $\mA(V) = T^{co}(V)$ is the cofree graded coalgebra (primitively) cogenerated by $V$; since both Hopf algebras are connected, the map also preserves the antipode, so this is in fact a map of Hopf algebras.  Since $\mB(V)$ is, by definition, the image of the map, the first claim follows.

The second claim follows from the first: since the indecomposables of $T(V)$ are clearly $V$, so too are the indecomposables of the quotient algebra $\mB(V)$.  Similarly, the primitives of $\mA(V)$ are definitionally $V$, so the same holds for the sub-coalgebra $\mB(V)$.  Further, if $A$ is any other braided Hopf algebra satisfying $V \cong P(A) \cong Q(A)$, then there are natural Hopf maps
$$T(V) \to A \to \mA(V).$$
The first is surjective, since $A$ is generated by $V$; dually, the second is injective.  Consequently the second carries $A$ isomorphically onto $\mB(V) \leq \mA(V)$.

From the first claim, we note that we can present $\mB(V)$ as the quotient of $T(V)$ by the kernel of the map $\mS$.  It was shown in \cite{schauenburg} that the degree $n$ component of $\mS$ is in fact given by the quantum symmetrizer.  This recovers the definition of \cite{nichols, woronowicz} that $\mB(V)_n$ is presented as the quotient of $T(V)_n = V^{\otimes n}$ by the kernel of $\mS_n$.

\end{proof}
 
There is a pairing between Nichols algebras for dual braided vector spaces that we will need in section \ref{koszul_nichols_section} in order to describe the differential on a Koszul complex for $\mB(V)$.  The dual $V^*$ of $V$ may be made into a braided vector space whose braiding is the dual of that of $V$.  

\begin{prop}

The pairing between $V$ and $V^*$ induces a nondegenerate pairing of Hopf algebras
$$\langle \cdot , \cdot \rangle: \mB(V) \otimes \mB(V^*) \to k.$$
which identifies $\mB(V^*)$ as the dual Hopf algebra of $\mB(V)$.

\end{prop}

\begin{proof}

To begin, the pairing between $V$ and $V^*$ extends to a Hopf algebra pairing between $T(V)$ and $T(V^*)$; that is, it is the unique pairing for which the product in $T(V)$ is adjoint to the coproduct in $T(V^*)$.  Explicitly, if $(-,-)$ is the standard termwise pairing between $V^{\otimes n}$ and $(V^*)^{\otimes n}$, then the pairing of tensor algebras $\langle -, -\rangle$ is zero on tensors of different length, and in degree $n$, 
$$\langle [v_1| \cdots | v_n], [\phi_1| \cdots | \phi_n] \rangle = \sum_{\tau \in S_n}  (\tautilde [v_1| \cdots | v_n], [\phi_1| \cdots | \phi_n]).$$

This may be degenerate; however, it is apparent from this formula that the kernel of the quantum symmetrizer is in fact the kernel of the pairing.  Thus it descends to a nondegenerate pairing
$$\langle \cdot , \cdot \rangle: \mB(V) \otimes \mB(V^*) \to k.$$

\end{proof}

\section{Topology of configuration spaces}

The purpose of this section is to prove Theorem \ref{ext_intro_thm}, identifying 
$$H_j(B_n, V^{\otimes n}) \cong \Ext_{\mA(V^*_\epsilon)}^{n-j, n}(k, k)$$
and explore this result through several examples.  Throughout, $V$ may be taken to be an arbitrary braided vector space.

\subsection{Fox-Neuwirth/Fuks cells}

We recall from \cite{fox-neuwirth} and \cite{fuks} (see also \cite{vassiliev, giusti-sinha}) a stratification of $\Conf_n(\C)$ by Euclidean spaces.  This does not give a cell-decomposition of $\Conf_n(\C)$, as not all of the boundary of these Euclidean spaces is attached to lower-dimensional cells.  Instead, this gives the 1-point compactification of $\Conf_n(\C)$ the structure of a CW complex -- the unattached boundaries are now glued to the point at infinity.

An \emph{ordered} partition (or \emph{composition}) $\lambda = (\lambda_1, \dots, \lambda_k)$ of $n$ has $\sum \lambda_i = n$.  We will write $k=:\ell(\lambda)$ for the number of \emph{parts} (or \emph{length}) of $\lambda$.  The $n^{\rm th}$ symmetric product $\Sym_n(\R)$ of the real line has a stratification by these partitions:
$$\Sym_n(\R) = \coprod_{\lambda \vdash n} \Sym_\lambda(\R)$$
where elements of $\Sym_{\lambda}(\R)$ consist of an unordered subset of $\ell(\lambda)$ distinct points $x_1, \dots x_{\ell(\lambda)}$, the $i^{\rm th}$ of which has multiplicity $\lambda_i$.  Further, since $\R$ is ordered, we insist that $x_1 < \dots < x_{\ell(\lambda)}$.  This space is evidently homeomorphic to $\Conf_{\ell(\lambda)}(\R)$, which is in turn homeomorphic to $\R^{\ell(\lambda)}$.

Define a map $\pi: \Conf_n(\C) \to \Sym_n(\R)$ by taking real parts:
$$\pi(z_1, \dots, z_n) = (\Re(z_1), \dots, \Re(z_n))$$
and let $\Conf_\lambda(\C)$ denote the preimage of $\Sym_\lambda(\R)$ under $\pi$ (see Figure \ref{conf picture}).  This subspace is homeomorphic to
$$\Sym_{\lambda}(\R) \times \prod_{i=1}^{\ell(\lambda)} \Conf_{\lambda_i}(\R),$$
where the configuration factors record the imaginary part of the configuration.  We again employ the fact that $\Conf_k(\R) \cong \R^k$ and conclude that $\Conf_\lambda(\C) \cong \R^{n+\ell(\lambda)}$. 
\begin{figure}
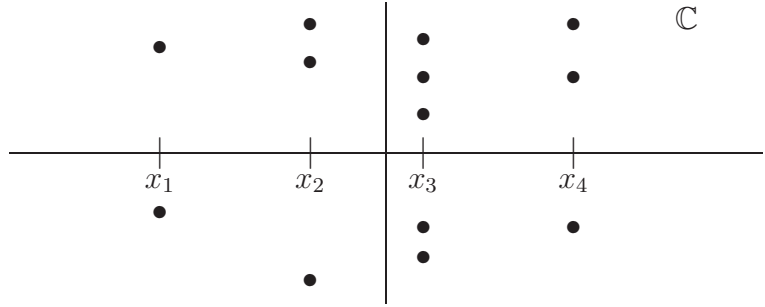

\[ \vcenter{	
	\xy
		(-30,14)*{\bullet}; (-30,-8)*{\bullet}; (-30,-4)*{x_1}; (-30,0)*{|};
		(-10,12)*{\bullet}; (-10,-17)*{\bullet}; (-10,17)*{\bullet}; (-10,-4)*{x_2}; (-10,0)*+{|};
		(5,10)*{\bullet}; (5,-10)*{\bullet}; (5,15)*{\bullet}; (5,-14)*{\bullet}; (5,5)*{\bullet}; (5,-4)*{x_3}; (5,0)*+{|};
		(25,10)*{\bullet}; (25,-10)*{\bullet}; (25,17)*{\bullet}; (25,-4)*{x_4}; (25,0)*+{|};
		{\ar@{-} (-50,0)*{}; (50,0)*{}};
		{\ar@{-} (0,-20)*{}; (0,20)*{}};
		(40,18)*{ \mathbb{C}};
	\endxy
	}
\]
	\caption{A configuration in $\Conf_{(2,3,5,3)}(\mathbb{C}) \subset \Conf_{13}(\mathbb{C})$. The configuration is mapped by $\pi$ to $(x_1,x_1, x_2, x_2,x_2, x_3, x_3,x_3,x_3,x_3, x_4,x_4,x_4) \in \Sym_{(2,3,5,3)}(\mathbb{R}) \subset \Sym_{13}(\mathbb{R})$. \label{conf picture}}
\end{figure}

These spaces then form a cellular decomposition of $\Conf_n(\C) \cup \{\infty\}$, as was established by Fox-Neuwirth and Fuks \cite{fox-neuwirth, fuks} (the latter in language closer to what we use here).  Loosely speaking, the boundaries of the cell described above occur in two ways.  First, points in a configuration may approach each other or infinity along vertical lines, (in which case their boundary is the point at infinity, as there is no point in $\Conf_n(\C)$ for them to approach).  Secondly, the $i^{\rm th}$ and $i+1^{\rm st}$ vertical columns of configurations may approach each other horizontally, in which case the associated component of the boundary is given in terms of a cell $\Conf_{\rho}(\C)$, where $\rho$ is obtained from $\lambda$ by summing $\lambda_i$ and $\lambda_{i+1}$.  In summary:

\begin{prop}

The space $\Conf_n(\C) \cup \{\infty\}$ may be presented as a CW complex whose positive dimension cells $\Conf_\lambda(\C)$ (of dimension $n+\ell(\lambda)$) are indexed by ordered partitions of $n$.  The boundary of $\Conf_\lambda(\C)$ is the union of $\Conf_\rho(\C)$ over all $\rho$ such that $\lambda$ is a refinement of $\rho$. 

\end{prop}

From this we can write down an explicit cellular chain complex for $\Conf_n(\C) \cup \{\infty\}$:

\begin{defn}[The Fox-Neuwirth/Fuks complex]  \label{conf_defn}

For integers $i$ and $j$, let 
$$c_{i, j} = \sum_{\tau} (-1)^{|\tau|}$$
be the sum of the signs\footnote{Here, the sign of a shuffle is that of the representative element of $S_{i+j}$.  This sum is actually a binomial coefficient; see the proof of Proposition \ref{simplest_ex_prop}.} of all shuffles $\tau: \{1, \dots, i\} \sqcup \{1, \dots, j\} \to \{1, \dots, i+j\}$.  Let $C(n)_*$ denote the chain complex which in degree $q$ is generated over $\Z$ by the set of ordered partitions $\lambda = (\lambda_1, \dots, \lambda_{q-n})$ of $n$ with $q-n$ parts.  The differential $d: C(n)_q \to C(n)_{q-1}$ is given by the formula 
$$d(\lambda_1, \dots, \lambda_{q-n}) = \sum_{i=1}^{q-n-1} (-1)^{i-1} c_{\lambda_i, \lambda_{i+1}}(\lambda_1, \dots, \lambda_{i-1}, \lambda_i+\lambda_{i+1}, \dots, \lambda_{q-n})$$

\end{defn}

The complex $C(n)_*$ is isomorphic to the relative cellular chain complex of $\Conf_n(\C) \cup \{\infty\}$, relative to the point $\infty$.  With coefficients modulo 2, this was originally obtained by Fuks in \cite{fuks}, and integrally by Va{\u\i}n{\v{s}}te{\u\i}n in \cite{vainstein}; see also \cite{vassiliev}.  Thus,
$$H_*(\Conf_n(\C) \cup \{ \infty \}, \{ \infty \}) \cong H_*(C(n)_*).$$
We note the shift of $q$ by $n$ in Definition \ref{conf_defn}; this reflects the fact $H_*(C(n)_*)$ is linearly dual to the compactly supported cohomology of $\Conf_n(\C)$.  The latter is supported in degrees $q=n+1, \dots, 2n$ since $B_n$ has homological dimension $n-1$.

An explanation of the formula for the differential: the constant $c_{\lambda_i, \lambda_{i+1}}$ in the definition of $C(n)_*$ is recording the fact that the part of the boundary of the cell $\Conf_\lambda(\C)$ which combines the columns of $\lambda_i$ and $\lambda_{i+1}$ points in a vertical line does so by shuffling those points together into a single vertical column.  The signs in the formula arise from the induced orientations on the boundary strata, and a general scheme for explaining them is given in \cite{giusti-sinha}.  

\subsection{The cellular chain complex with local coefficients} \label{cell_section}

Let $L$ be a representation of $B_n$, and $\LL$ the associated local system over $\Conf_n(\C)$.  
Since $\LL$ trivializes on the open cells of the Fox-Neuwirth/Fuks stratification, it follows from the previous section that the cellular chain complex with local coefficients in $\LL$ is in fact isomorphic (as a graded group) to $C(n)_* \otimes L$.  To understand the differential, however, we must incorporate the braid action on $L$.

Specifically, we define a differential $d$ on $C(n)_* \otimes L$ by
$$d[(\lambda_1, \dots, \lambda_{q-n}) \otimes \ell] = \sum_{i=1}^{q-n-1} (-1)^{i-1} \left[(\lambda_1, \dots, \lambda_{i-1}, \lambda_i+\lambda_{i+1}, \dots, \lambda_{q-n}) \otimes \sum_{\tau} (-1)^{|\tau|}\tautilde(\ell)\right]$$
where $\tau$ is drawn from the shuffles of $\lambda_i$ with $\lambda_{i+1}$, and $\tautilde$ is its lift (as described in section \ref{qsa_section}) to the copy of $B_{\lambda_i+ \lambda_{i+1}} \leq B_n$ consisting of the braids that are only nontrivial on the $\lambda_i+ \lambda_{i+1}$ strands starting with the $\lambda_1 + \dots + \lambda_{i-1}+1\st$.

\begin{thm} \label{complex_thm}

There is an isomorphism $H_*(\Conf_n(\C) \cup \{ \infty \}, \{ \infty \}; \LL) \cong H_*(C(n)_* \otimes L)$.

\end{thm}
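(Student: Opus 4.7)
The plan is to interpret the complex $C(n)_* \otimes L$ as the cellular chain complex of the pair $(\Conf_n(\C) \cup \{\infty\}, \{\infty\})$ with coefficients in the local system $\LL$, with respect to the Fox-Neuwirth/Fuks stratification, and then invoke the standard theorem that cellular homology with local coefficients computes singular homology with local coefficients. Since each open cell $\Conf_\lambda(\C) \cong \R^{n+|\lambda|}$ is contractible, $\LL$ restricts to a trivial system there; after choosing a basepoint $x_\lambda$ in each cell, the fibers $\LL_{x_\lambda}$ are all canonically isomorphic to $L$, giving a graded isomorphism
\[
C^{\rm cell}_*(\Conf_n(\C) \cup \{\infty\}, \{\infty\}; \LL) \;\cong\; C(n)_* \otimes L.
\]

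The real content is matching the differential. For any CW-pair, the local-coefficient boundary from a cell $\sigma$ to a cell $\tau$ is the sum, over the preimages in $\partial \sigma$ of the open $\tau$, of the usual incidence sign weighted by the monodromy of $\LL$ along a path in $\overline{\sigma}$ from $x_\sigma$ to $x_\tau$ that passes through that preimage. By the refinement description of cell boundaries, the only codimension-one faces of $\Conf_\lambda(\C)$ lying in $\Conf_n(\C)$ (rather than collapsing to $\infty$, where they contribute $0$ in the relative complex) arise from collapsing two adjacent columns $\lambda_i, \lambda_{i+1}$ into a single column of $\lambda_i + \lambda_{i+1}$ points. As the right column moves leftward to meet the left column, its points interleave vertically with those already present, and each shuffle $\tau \colon \{1,\dots,\lambda_i\} \sqcup \{1,\dots,\lambda_{i+1}\} \to \{1,\dots,\lambda_i+\lambda_{i+1}\}$ specifies a distinct component of this boundary piece and hence a distinct contribution to the differential.

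For each such $\tau$ one chooses an explicit path $\gamma_\tau$ in $\overline{\Conf_\lambda(\C)}$ from $x_\lambda$ across the appropriate face, then inside $\Conf_\rho(\C)$ to $x_\rho$ (where $\rho$ merges $\lambda_i$ and $\lambda_{i+1}$). The geometric point is that the loop obtained by concatenating $\gamma_\tau$ with a fixed reference path from $x_\rho$ back to $x_\lambda$ realizes precisely the canonical lift $\tautilde \in B_{\lambda_i+\lambda_{i+1}} \le B_n$ acting on the strands labeled $\lambda_1+\cdots+\lambda_{i-1}+1, \ldots, \lambda_1+\cdots+\lambda_{i+1}$: as the right column sweeps past the left column, its strands cross in front, and the terminal configuration is the $\tau$-shuffle of the two columns, which is the defining picture of $\tautilde$ from Section \ref{qsa_section}. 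The incidence sign decomposes into a shuffle sign $(-1)^{|\tau|}$ and a global sign $(-1)^{i-1}$ coming from the orientation of the face that merges the $i\th$ and $(i+1)\st$ columns; these are exactly the same signs that appear in Definition \ref{conf_defn} for the trivial-coefficient complex, and can be computed in the systematic framework of \cite{giusti-sinha}. Summing over $\tau$ and $i$ reproduces the differential $d$ displayed before the theorem, and comparing with the singular homology with local coefficients via the standard cellular-approximation argument yields the claimed isomorphism.

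The main obstacle is the geometric identification of the monodromy along $\gamma_\tau$ with the specific braid $\tautilde$, rather than with $\tautilde^{-1}$ or some other lift of $\tau$; this requires being consistent about which column passes in front of which as the two columns collide, and about orientation conventions on the cells so that the shuffle sign $(-1)^{|\tau|}$ comes out with the right overall normalization. Once these conventions are fixed in a manner compatible with the definition of $\tautilde$, the rest of the argument is formal.
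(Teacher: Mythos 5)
Your proposal is correct and is essentially the paper's own argument: the paper makes ``cellular homology with local coefficients'' precise by lifting the Fox-Neuwirth/Fuks cells to the universal cover and computing $\widetilde{C(n)_*}\otimes_{\Z B_n}L$, which is your path-monodromy description in equivalent language, and both versions hinge on the same identification of the monodromy across a shuffle-indexed face with the canonical lift $\tautilde$. The convention you flag as the main obstacle is resolved in the paper exactly as you suggest, by fixing which lift of each cell carries the identity label (the face where the left column's points always lie below those arriving from the right), so that nontrivial shuffles pick up the braid $\tautilde$ acting on the relevant strands.
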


\begin{proof}

Given a cell complex structure on a space $X$, one can lift the cells to the universal cover $\tilde{X}$ to obtain a cell decomposition of $\tilde{X}$. Given a $\pi_1(X)$ representation $L$, the homology of $X$ with coefficients in $L$ can be computed as the homology of the cellular complex $C^{CW}_*(\tilde{X})\otimes_{\mathbb{Z}\pi_1(X)} L$.  We may describe this complex explicitly.  The set of cells of $\tilde{X}$ admits the structure of a right torsor under $\pi_1(X)$ with quotient given by the set of cells of $X$. Noncanonically, we may identify this with the set of pairs consisting of a cell of $X$ and an element of $\pi_1(X)$. 

In our case, let $\widetilde{C(n)_*}$ be the cellular chain complex of the universal cover on $\Conf_n(\C)$ obtained by lifting the Fox-Neuwirth/Fuks cells. We may noncanonically identify
	\[ \widetilde{C(n)_q} \cong \Z \{ ( (\lambda_1, \ldots, \lambda_{q-n}), b) | b \in B_n \} \]
as right $B_n$ representations (where $B_n$ acts by right multiplication on itself on the right side). We describe one choice of such an identification which gives the desired description of the differentials.

The top dimensional cells of $\widetilde{C(n)_q}$ occur when $q=2n$ and are of the form $((1, \ldots, 1), b)$ where $b \in B_n$.  Consider the codimension 1 faces of the cell $((1, \ldots, 1), b)$ obtained when the $i\nth$ and $i+1\st$ points in $\mathbb{C}$ lie on a single vertical line. There are two ways for this to happen. Either the $i\nth$ point is below the $i+1\st$ point, or it is above the $i+1\st$ point. We identify the former with the cell labelled $((1, \ldots,1, 2, 1, \ldots, 1),b)$ (where the $2$ is in the $i\nth$ position), while the later is the cell labelled by $((1, \ldots, 1,2,1, \ldots, 1), b'b)$, where $b'$ is the braid swapping the $i\nth$ strand under the $i+1^\st$ strand. Note that this choice of labelling is consistent with the right action of $B_n$.

More generally, the cell $((\lambda_1, \ldots, \lambda_{q-n}), b)$ corresponds to the (high codimension) face of $((1, \ldots, 1), b)$ obtained by iteratively taking the first of the faces described above (leaving the element $b \in B_n$ unchanged).  Specifically, for each $i=1, \dots, q-n$, take the face of $((1, \ldots, 1), b)$ where the $\lambda_i$ points in the $i\nth$ column come together so that the points to the leftmost always lie below the points coming from the right.  If, instead, one arranged the face so that the points on the right come from nontrivial shuffles with the points on the left, then the element of $B_n$ that this cell should be labelled by is multiplied by the (lift to the braid group) of the shuffle, as in the definition of the quantum shuffle algebra.

Under this identification of the fibres of $(\lambda_1, \ldots, \lambda_{q-n})$ with $B_n$, it becomes clear that the $i\nth$ face map -- which collides the $i\nth$ and $i+1\st$ columns of points -- is given by
	\[ \partial_i((\lambda_1, \ldots, \lambda_{q-n}), \sigma) = \sum_{\tau} (-1)^{|\tau|}((\lambda_1, \ldots, \lambda_i + \lambda_{i+1}, \ldots, \lambda_{q-n}), \tilde{\tau} \sigma). \]
Here, the sum is over shuffles $\tau$ of sets of cardinality $\lambda_i$ and $\lambda_{i+1}$ and $\tilde{\tau}$ is the usual lift of a shuffle to $B_{\lambda_i+ \lambda_{i+1}} \leq B_n$.  The signs in this formula come from comparing the orientations of these cells.  The differential on this complex is given by the signed sum of face maps: $d = \sum_{i=1}^{q-n-1} (-1)^{i-1} \partial_i$. 

Given a $B_n$ representation $L$, we want to give a description of the chain complex $\widetilde{C(n)_*} \otimes_{\mathbb{Z}B_n} L$ and its differential.  First, we identify $(\underline{\lambda}, \sigma) \otimes \ell$ with $(\underline{\lambda}, 1) \otimes \sigma(\ell)$. Then, 
\begin{align*} 
	d[((\lambda_1, \dots, \lambda_{q-n}),\sigma) \otimes \ell] & = \sum_{i=1}^{q-n-1} \sum_{\tau} (-1)^{|\tau|+i-1} ((\lambda_1, \dots, \lambda_i+\lambda_{i+1}, \dots, \lambda_{q-n}), \tilde{\tau}\sigma) \otimes \ell  \\
	&= \sum_{i=1}^{q-n-1}  \sum_{\tau} (-1)^{|\tau|+i-1} ((\lambda_1, \dots, \lambda_i+\lambda_{i+1}, \dots, \lambda_{q-n}), 1) \otimes \tilde{\tau}\sigma(\ell). 
\end{align*}
Identifying $\widetilde{C(n)_*} \otimes_{\mathbb{Z}B_n} L \cong C(n)_* \otimes L$ via $(\underline{\lambda}, \sigma)\otimes \ell \mapsto (\underline{\lambda}, 1) \otimes \sigma(\ell)$ then gives the desired description of the chain complex and differential.
\end{proof}

\subsection{A homological algebraic interpretation} 

Let $A$ be a connected, graded algebra, and write $I:= A_{>0}$ for the augmentation ideal consisting of elements of positive degree.  For a right $A$-module $M$ and left module $N$, we consider the \emph{normalized} (or \emph{reduced}) \emph{two-sided bar complex} $B_*(M, A, N)$ with
$$B_q(M, A, N) = M \otimes I^{\otimes q} \otimes N.$$
Recall that the differential on the bar complex is given by
$$d(a_0 \otimes \dots \otimes a_{q+1}) := \sum_{i=0}^q (-1)^i  a_0 \otimes \dots \otimes a_{i-1} \otimes a_i a_{i+1} \otimes a_{i+2} \otimes \dots \otimes a_{q+1}.$$
where $a_0 \in M$, $a_{q+1} \in N$, and $a_i \in I$ if $0<i<q+1$.   The bar complex computes $\Tor$:
$$H_q B_*(M, A, N) \cong \Tor^{A}_q(M, N).$$

When $M = N = I$, $B_q(I, A, I) = I^{\otimes q+2}$ may be extended to one which is nonzero for $q \geq -2$ by the same formula.  That is, we define:
$$B^e_q(I, A, I) = I^{\otimes q+2}, \; \mbox{for $q \geq -1$.  If $q=-2$, set} \; B^e_{-2}(I, A, I) = k.$$
Equip $B^e_q(I, A, I)$ with the same differential $d: B^e_q \to B^e_{q-1}$ for $q \geq 0$.  That is, the bar differential above is still well-defined for $q \geq 0$, as well as for $q=-1$, if we choose to interpret it as $d=0$ (which we do).  Furthermore, it is still the case that $d^2 = 0$ since $A$ is associative.

There is a short exact sequence of $A$-modules
\beqn \label{aug_ses_eqn} 0 \to I \to A \to k \to 0 \eeqn
where $k = A/I$ is equipped with the trivial $A$-module structure. 

\begin{prop}  \label{shift_by_2_prop}

The connecting homomorphisms associated with (\ref{aug_ses_eqn}) induce a quasi-isomorphism $B^e_*(I, A, I) \simeq B_{*+2}(k, A, k)$.

\end{prop}

\begin{proof}

The natural chain projection $B_*^e(I, A, I) \to B_*(I, A, I)$ is an isomorphism in non-negative degrees, so the map $H_p B_*^e(I, A, I) \to \Tor_p^{A}(I, I)$ it induces in homology is an isomorphism if $p>0$.  

Further, the sequence (\ref{aug_ses_eqn}) yields a long exact sequence
$$\cdots \to \Tor_{p+1}^A(A, I) \to \Tor_{p+1}^A(k, I) \to \Tor_p^A(I, I) \to \Tor_p^A(A, I) \to \cdots$$
For $p>0$, the outer terms are 0, and so $\Tor_{p+1}^A(k, I) \cong \Tor_p^A(I, I)$.  We may apply the same argument on the second module variable in $\Tor$ (but with $k$ in the first module) and obtain a chain of isomorphisms
\beqn \label{H_shift_eqn} \Tor_p^A(I, I) \cong \Tor_{p+1}^A(k, I) \cong \Tor_{p+2}^A(k, k) \mbox{ if $p>0$,} \eeqn
and so the connecting maps give a zigzag of chain complexes
\beqn B^e_{*, n}(I, A, I) \leftarrow \cdots \rightarrow B_{*+2, n}(k, A, k) \label{shift_eqn} \eeqn 
which is an isomorphism in homology in degrees $*>0$.

In the lowest degree, it is clear that $\Tor_{0}^A(k, k) = k = H_{-2} B^e_{*}(I, A, I)$.  Similarly, it is apparent from examining low degree terms in the bar complex that both $\Tor_1^{A}(k, k)$ and  $H_{-1}B^e_{*}(I, A, I)$ are isomorphic to $I/ I^2 = Q(A)$.

In $\Tor$ degree 2, we may use one application of (\ref{H_shift_eqn}) to obtain an isomorphism $\Tor_{2}^A(k, k) \cong \Tor_1^A(k, I)$, and then
$$0 \to \Tor_{1}^A(k, I) \to I \otimes_{A} I \to I \to k \otimes_{A} I.$$
That is, $\Tor_{2}^A(k, k) \cong \Tor_1^A(k, I)$ is the kernel of the map $I \otimes_{A} I \to I$ given by multiplication.  It is easy to identify this with $H_{0} B^e_{*}(I, A, I)$, so we conclude that (\ref{shift_eqn}) may be extended to all degrees.

\end{proof}

For an element $a = a_0 \otimes \dots \otimes a_{q+1} \in B^e_q(I, A, I)$ with $a_i$ homogenous elements of $I$ of degree $\deg(a_i)$, we may define the degree of $a$ to be $\deg(a) = \sum \deg(a_i)$.  The differential in $B^e_*(I, A, I)$ strictly preserves the degree of elements; we will write $B^e_{*, n}(I, A, I)$ for the (split) subcomplex generated by homogeneous elements of degree precisely $n$.  

\begin{defn}

Write $\epsilon$ for the braided $k$-module $\epsilon = k$ with braiding on $\epsilon^{\otimes 2} \cong k$ given by multiplication by $-1$.   For a general braided $k$-module $(V, \sigma)$, write $V_{\epsilon} = V \otimes \epsilon$ with braiding twisted by the sign on $\epsilon$.

\end{defn}

If $V = V(R, x)$ is the braided $k$-module associated to a rack $R$ and cocycle $x$, then $V_\epsilon = V(R, -x)$ is obtained by twisting the cocycle by the sign representation of the rack, as defined in Section \ref{rack_section}. 

\begin{prop} \label{alg_top_prop}

There is an isomorphism of chain complexes
$$B^e_{*, n}(I, \mA(V_\epsilon), I) \cong C(n)_{n+2+*} \otimes V^{\otimes n}.$$

\end{prop}

\begin{proof}

Note that $B^e_{q, n}(I, \mA(V_\epsilon), I)$ is the summand of $I \otimes I^{\otimes q} \otimes I$ spanned by all spaces of the form
$$V_{\epsilon}^{\otimes m_0} \otimes (V_{\epsilon}^{\otimes m_1} \otimes \dots \otimes V_{\epsilon}^{\otimes m_q}) \otimes V_{\epsilon}^{\otimes m_{q+1}}$$
where $\sum m_i = n$.  Of course, such a space is isomorphic to $V^{\otimes n}$, but it also carries the information of an ordered partition of $n$ with $q+2$ parts.  So there is an isomorphism of $k$-modules between $B^e_{q, n}(I, \mA(V_\epsilon), I)$ and $C(n)_{n+2+q} \otimes V^{\otimes n}$.  Further, it is apparent from the definition in section \ref{cell_section} that the differential on the latter is precisely the differential in the bar complex for $\mA(V_\epsilon)$ -- the signs coming from $\epsilon$ encode the boundary orientations on cells in the Fox-Neuwirth/Fuks model.

\end{proof}

\begin{cor} \label{main_cor}

There are isomorphisms\footnote{Note that dual braided vector spaces $V_\epsilon$ and $V^*_{\epsilon}$ are used in the $\Tor$ and $\Ext$ calculations.}
$$H^q(B_n; V^{\otimes n}) \cong \Tor^{n-q, n}_{\mA(V_\epsilon)}(k, k) \; \mbox{ and } \; H_q(B_n; V^{\otimes n}) \cong \Ext^{n-q, n}_{\mA(V^*_\epsilon)}(k, k).$$
Furthermore, the natural multiplication on the braid homology is carried to the Yoneda product on $\Ext$; that is, 
$$\bigoplus_{n=0}^\infty H_*(B_n; V^{\otimes n}) \cong \bigoplus_n \Ext^{n-*, n}_{\mA(V^*_\epsilon)}(k, k)$$
is an isomorphism of bigraded rings. 

\end{cor}

In the case that $V=k$ is the sign representation of $B_n$, this result was established by Markaryan \cite{markaryan}; Cohen also computed these homology groups using different techniques in \cite{clm}.  Callegaro adapted this argument to the setting where the generating braids operate on $V = k$ by multiplication by $q \in k^{\times}$ \cite{callegaro}.  Both then computed these cohomologies; we revisit these computations in section \ref{q_section}.  In \cite{ks}, Kapranov and Schechtman also establish this isomorphism.

\begin{proof}[Proof of Corollary \ref{main_cor}]

Let $\LL$ be a local system over $\Conf_n(\C)$.  The compactly supported cohomology of $\LL$ is dual to a relative homology group of the one-point compactification, but one must be careful to dualize the local system:
$$H^p_c(\Conf_n(\C), \LL) = H^p(\Conf_n(\C) \cup \{ \infty \}, \{ \infty \}, \LL) \cong H_p(\Conf_n(\C) \cup \{ \infty \}, \{ \infty \}, \LL^*)^*.$$

Now take $\LL$ to be the local system corresponding to $V^{\otimes n}$, so that $\LL^*$ corresponds to $(V^*)^{\otimes n}$.  Using Theorem \ref{complex_thm}, we have an isomorphism
$$H_*(\Conf_n(\C) \cup \{ \infty \}, \{ \infty \}; \LL^*) \cong H_*(C(n)_* \otimes (V^*)^{\otimes n}).$$
Then for the second isomorphism, we employ Poincar\'{e} duality with local coefficients for the (orientable) non-compact $2n$-manifold $\Conf_n(\C)$:
$$H_q(B_n; V^{\otimes n}) = H_q(\Conf_n(\C); \LL) \cong H^{2n-q}_c(\Conf_n(\C); \LL) \cong H_{2n-q}(C(n)_* \otimes (V^*)^{\otimes n})^*.$$
The Propositions \ref{shift_by_2_prop} and \ref{alg_top_prop} identifiy the latter cochain complex:
$$(C(n)_{n+*} \otimes (V^*)^{\otimes n})^* \cong B_{*-2, n}^e(I, \mA(V^*_{\epsilon}), I)^* \simeq B_{*, n}(k, \mA(V^*_{\epsilon}), k)^*$$
Take cohomology in degree $*=n-q$ to obtain
$$H_q(B_n; V^{\otimes n}) \cong \Ext_{\mA(V_\epsilon^*)}^{n-q, n}(k, k),$$
using the fact that the dual of the bar complex $B_*(k, \mA(V_\epsilon^*), k)$ computes $\Ext_{\mA(V_\epsilon^*)}^*(k, k)$.   The dual statement for cohomology using $\Tor$ proceeds similarly, but does not require dualization of $\LL$ (and hence $V_{\epsilon}$).

We address the ring structure.  Multiplication in the homology of configuration spaces is gotten by the little disks multiplication which places configurations side by side.  This extends to homology with coefficients in the local system associated to $V^{\otimes n}$ by virtue of Proposition \ref{monoidal_equiv_prop}: these are precisely monoidal representations for this multiplication.  On the other hand, the multiplication in the cobar complex $\Omega(A^*) = \Hom(B_{*}(k, A, k), k)$ is gotten by juxtaposition of tensors.  Tracing this through the isomorphisms above, this is precisely the cellular operation corresponding to the little disks multiplication. 

\end{proof}

\subsection{An example: the configuration space of the plane} \label{conf_section}

In the case of the trivial coefficients $V = k$, the previous construction recovers the homologies\footnote{The results of this and the next section are not needed for the proof of our main result, Theorem \ref{th:mainmalle}, but are instructive on how to approach these computations.  They will hopefully also provide some context for the reader familiar with classical computations of the cohomology of configuration space with trivial (or nearly trivial) coefficients.} of $\Conf_n(\C)$ as $n$ ranges over $\Z_{\geq 0}$.  In this case, the quantum shuffle algebra $\mA(V_\epsilon)$ is generated (as a $k$-module) by the classes $x_n = [1|1|\dots |1]$, where there are $n$ occurrences of $1$.

\begin{prop} \label{simplest_ex_prop}

The algebra $\mA(V^*_\epsilon)$ is isomorphic to $\Lambda[x_1] \otimes \Gamma[x_2]$.

\end{prop}

Here, $\Gamma[x_2]$ is the divided power algebra generated by $x_2$.  It is generated additively by the classes $x_{2n}$, subject to the relations
\beqn x_{2m} \star x_{2n} = {m+n \choose n} x_{2m+2n} \label{div_power_eqn}. \eeqn

\begin{proof}

As in Definition \ref{conf_defn}, $x_m \star x_n = c_{m, n} x_{m+n}$, where 
$$c_{m, n} = \sum_{\tau} (-1)^{|\tau|}$$
where the sum is over shuffles $\tau: \{1, \dots, m\} \sqcup \{1, \dots, n\} \to \{1, \dots, m+n\}$.  This is evidently commutative, and it is easy to see that 
$$c_{1, m} = c_{m,1} = \left\{ \begin{array}{ll} 1, & \mbox{ $m$ is even} \\ 0, & \mbox{ $m$ is odd.} \end{array} \right.$$
Thus $x_1^2 = 0$ and $x_{2n+1} = x_1 \star x_{2n}$.  The result then follows from the fact that $c_{2m, 2n} = {m+n \choose n}$, which can be shown via a straightforward recursion, or using Proposition \ref{stanley_prop}.

\end{proof}

The following is then immediate:

\begin{cor}

For $\mA= \Lambda[x_1] \otimes \Gamma[x_2]$, there is an isomorphism of $k$-algebras
\beqn \label{conf_eqn}H_*(\coprod_n\Conf_{n}(\C); k) \cong \Ext^{*}_\mA(k, k).\eeqn

\end{cor}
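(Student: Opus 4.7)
The plan is to simply combine the two results that immediately precede the corollary: the identification of $\mA(V_\epsilon)$ as $\Lambda[x_1] \otimes \Gamma[x_2]$ when $V = k$, and the ring isomorphism of Corollary~\ref{main_cor}.

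First, specialize Corollary~\ref{main_cor} to the trivial braided vector space $V = k$. Since $V$ is trivial, the local system $\mathcal{L}$ over $\Conf_n(\C)$ associated with $V^{\otimes n}$ is trivial too, so $H_j(B_n; V^{\otimes n}) = H_j(B_n; k) = H_j(\Conf_n(\C); k)$, the latter by the standard identification $\Conf_n(\C) \simeq K(B_n, 1)$. Corollary~\ref{main_cor} then supplies a ring isomorphism
\[
\bigoplus_{n \geq 0} H_*(\Conf_n(\C); k) \;\cong\; \bigoplus_{n \geq 0} \Ext^{n-*,\, n}_{\mA(V_\epsilon)}(k, k).
\]

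Next, I would forget the internal $n$-grading on the right-hand side. Since for fixed homological degree $p$ the groups $\Ext^{p,\,n}_{\mA}(k,k)$ for varying $n$ are summands of the single group $\Ext^{p}_{\mA}(k, k)$ (the internal grading is inherited from the grading on $\mA$ and its bar complex), summing over $n$ on both sides recovers $\Ext^{*}_{\mA}(k,k)$ on the right; this is compatible with the Yoneda product since the Yoneda product preserves the internal degree additively. This yields the displayed isomorphism (\ref{conf_eqn}) once we input the identification $\mA = \mA(V_\epsilon) \cong \Lambda[x_1] \otimes \Gamma[x_2]$ provided by the preceding proposition.

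There is essentially no obstacle here: the corollary is a formal consequence of the two inputs. The only thing worth checking carefully is that the multiplicative structure on $\bigoplus_n H_*(\Conf_n(\C); k)$ intended in the statement agrees with the one used in Corollary~\ref{main_cor}, namely the little $2$-disks (Pontryagin) product induced by juxtaposing configurations side-by-side; this is the standard ring structure on the homology of $\coprod_n \Conf_n(\C)$ and is what the Yoneda product corresponds to under the isomorphism, as spelled out at the end of the proof of Corollary~\ref{main_cor}.
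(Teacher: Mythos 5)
Your proposal is correct and is exactly the paper's argument: the paper states the corollary as "then immediate" from Corollary \ref{main_cor} specialized to $V=k$ (so that $H_j(B_n;k)\cong H_j(\Conf_n(\C);k)$ via $\Conf_n(\C)\simeq K(B_n,1)$) together with the identification $\mA(V_\epsilon)\cong\Lambda[x_1]\otimes\Gamma[x_2]$ from the preceding proposition. Your remark about matching the little-disks product with the Yoneda product is precisely the point already handled at the end of the proof of Corollary \ref{main_cor}.
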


In this and the next section, we will frequently encounter the cohomology of a divided power algebra $\Gamma[x_m]$ on a generator in degree $m$.  We recall that in characteristic zero, $\Gamma[x_m] \cong k[x_m]$, so
\beqn \Ext_{\Gamma[x_m]}(k, k) = \Lambda[z_m] \label{char0_div_ext_eqn} \eeqn
is an exterior algebra on a generator $z_m \in \Ext^{1, m}$.  

Taking $k$ to be a field of characteristic $p$, it is well-known that there is an isomorphism
$$\Gamma[x_m] = \bigotimes_{i=0}^\infty P_p[x_{mp^i}]$$
where $P_p[z] = k[z]/z^p$ is the truncated polynomial algebra on a single generator.  If $p=2$, this is an exterior algebra, so we obtain $\Gamma[x_m] \cong \Lambda[x_m, x_{2m}, \dots, x_{m2^i}, \dots]$.  Then the cohomology is a polynomial algebra
\beqn \Ext^{*}_{\Gamma[x_m]}(k, k)\cong k[y_{m}, y_{2m}, \dots, y_{m2^i}, \dots], \label{char2_div_ext_eqn} \eeqn
where $y_{m2^i} \in \Ext^{1, m2^i}$.  Finally, in odd characteristic, there is an isomorphism
\beqn \Ext_{\Gamma[x_m]}^{*, *}(k, k) = \bigotimes_{i=0}^\infty \Lambda[z_{mp^i}] \otimes k[y_{mp^{i+1}}], \label{charp_div_ext_eqn} \eeqn
where $y_{mp^{i+1}} \in \Ext^{2, mp^{i+1}}$ and $z_{mp^i} \in \Ext^{1, mp^{i}}$ come from the factor $\Ext_{P_p[x_{mp^i}]}(k,k)$.

We use these computations and the previous corollary to understand the cohomology of configuration spaces.  Throughout, the tensor factor of $\Lambda[x_1]$ in $\mA= \Lambda[x_1] \otimes \Gamma[x_2]$ will produce a tensor factor of $k[y_1]$ in $\Ext$, where $y_1 \in \Ext^{1, 1}$.  In characteristic zero, we then have
$$H_*(\coprod_n\Conf_{n}(\C); k) \cong k[y_1] \otimes \Lambda[z_2].$$
Topologically, $y_1 \in \Ext^{1, 1}$ is a 0-dimensional homology class coming from $\Conf_1(\C)$, and $z_2 \in \Ext^{1, 2}$ is a 1-dimensional class coming from $\Conf_2(\C)$.  

Then $H_*(\Conf_n(\C); k)$ is generated by $y_1^n$ (of dimension 0) and $y_1^{n-2} z_2$ (of dimension 1).  This reflects the known result that $\Conf_n(\C)$ is contractible when $n=0, 1$, and has the rational homology of a circle when $n>1$.  We note that the stabilisation map $\Conf_n(\C) \to \Conf_{n+1}(\C)$ which adds a point near infinity is given by multiplication by $y_1$.  From the above computations, this is an isomorphism in $H_i$ with $i \neq 1$ for every $n$.  In $H_1$, it is evidently an isomorphism as long as $n>1$.

In characteristic $p=2$, we similarly have
$$\Ext^{*}_\mA(k, k)\cong k[y_1, y_2, \dots, y_{2^i}, \dots],$$ 
Topologically, $y_{2^i}$ is a class $y_{2^i} \in H_{2^i-1}(\Conf_{2^i}(\C); k)$.

The approximation theorem gives us a map $\coprod_n\Conf_{n}(\C) \to \Omega^2 S^2$, and the group completion theorem implies that the homology of the codomain is obtained from that of the domain by inverting $\pi_0$.  However, a generator of $\pi_0 \cong \Z_{>0}$ is given by $y_1$, so
$$H_*(\Omega^2 S^2; k) \cong k[y_1, y_2, \dots, y_{2^i}, \dots][y_1^{-1}].$$
This is a familiar computation (see, e.g., \cite{clm}); one may identify $y_{2^i} = Q_1(y_{2^{i-1}})$ in terms of iterates of the first (and only) Kudo-Araki-Dyer-Lashof operation applied to $y_1$.  

One may obtain a proof of the (well-known) homological stability in this setting, too.  We note that a monomial
$$y_1^{e_1} y_2^{e_2} \cdots y_m^{e_m} \in H_q(\Conf_n(\C); k) = \Ext^{n-q, n}_\mA(k,k)$$
precisely when $\sum e_i = n-q$, and $\sum e_i 2^i = n$.  It is apparent from this description that stabilisation (multiplication by $y_1$) is always injective.  Let us show that stabilisation $H_{q}(\Conf_{n-1}(\C); k) \to H_q(\Conf_n(\C); k)$ is surjective when $q< \frac{n}{2}$.  In this case, $\sum e_i> \frac{n}{2}$.  However, if $e_1 = 0$, then $\sum e_i 2^i \geq \sum e_i 2 > n$, a contradiction.  Thus $e_1 > 0$, and so multiplication by $y_1$ is surjective.  A similar analysis in odd characteristics is left to the reader.

Finally, it is worth mentioning that the ring $H^*(\Omega S^2; k)$ is isomorphic to the quantum shuffle algebra $\mA$ for any base field $k$.  The dual Eilenberg-Moore spectral sequence
$$\Ext_{H^*(\Omega S^2; k)}^*(k, k) \implies H_*(\Omega^2 S^2; k)$$
does not converge, as $\Omega^2 S^2$ is not connected.  Rather, equation (\ref{conf_eqn}) tells us that the (collapsing) spectral sequence computes the homology of the disjoint union of the configuration spaces.  This result, along with the group completion theorem provides a sort of replacement for the Eilenberg-Moore spectral sequence. 

\subsection{A second example: The quantum divided power algebra} \label{q_section}

As in the previous section, we will take the braided vector space $V=k$ to be rank one, but equip $V$ with a nontrivial braiding. Specifically, for $q \in k^{\times}$, we equip $V \otimes V = k$ with the braiding that is given by multiplication by $q$.  Then for each $n>0$ and $1 \leq i <n$, 
$$\sigma_i\mapsto q\;  \mbox{ in the representation }\;  B_n \to \GL(V^{\otimes n}) = k^{\times}.$$
In unpublished work, Marshall Smith computed the structure of the quantum shuffle algebra for this braided vector space, as well as its cohomology.  The same results had been obtained previously by Callegaro in slightly different language \cite{callegaro}.  We summarize these results here.

\begin{defn}

The \emph{quantum divided power algebra} $\Gamma_q[x]$ associated to $q \in k^{\times}$ is additively generated by elements $x_n$ in degree $n$, equipped with the product
$$x_n \star x_m := {n+m \choose m}_q x_{n+m}$$
where the quantum binomial coefficient is defined by
$${a \choose b}_q = \frac{[a]_q [a-1]_q \cdots [a-b+1]_q}{[b]_q [b-1]_q \cdots [1]_q}; \; \mbox{ here } \; [r]_q = \frac{1-q^r}{1-q} = 1+q+q^2 + \dots + q^{r-1}$$

\end{defn}

\begin{prop} \label{stanley_prop} There is an isomorphism of graded rings $\Gamma_{q}[x] \to \mA(V)$ which carries the class $x_n$ to $[1|1|\dots |1]$, where there are $n$ occurrences of $1$.
\end{prop}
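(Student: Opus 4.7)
The plan is to unwind the definitions of the quantum shuffle product on $\mA(V)$ when $V = k$ is one-dimensional, and identify the resulting structure constants with quantum binomial coefficients via a standard combinatorial $q$-identity.

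First, since $\rk_k(V) = 1$, we have $V^{\otimes n} = k$ as a $k$-module, spanned by the single element $[1 \mid 1 \mid \dots \mid 1]$ (with $n$ entries). Denote this generator by $x_n$. Thus $\mA(V) = \bigoplus_{n \geq 0} k\cdot x_n$ matches the underlying graded $k$-module of $\Gamma_q[x]$, and the map $x_n \mapsto [1\mid\dots\mid 1]$ is tautologically a $k$-linear isomorphism in each degree. It remains only to check that it intertwines the multiplications.

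Next, I compute the quantum shuffle product $x_m \star x_n$. By definition,
\[
x_m \star x_n \;=\; \sum_{\tau} \tilde{\tau}\bigl([1\mid\dots\mid 1]\bigr)
\]
where $\tau$ runs over all $(m,n)$-shuffles and $\tilde\tau \in B_{m+n}$ is its preferred lift from Section \ref{qsa_section}. Since $V^{\otimes(m+n)} = k$ is one-dimensional, the $B_{m+n}$-action is by a character $\chi: B_{m+n} \to k^{\times}$. The generators $\sigma_i$ act as $\sigma_i \mapsto q$, so $\chi(\tilde\tau) = q^{\ell(\tilde\tau)}$ where $\ell(\tilde\tau)$ denotes the word length of $\tilde\tau$ in the generators $\sigma_1, \dots, \sigma_{m+n-1}$. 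By construction of the shuffle lift (moving the last $n$ strands in front of the first $m$), the length $\ell(\tilde\tau)$ equals the number of inversions $\mathrm{inv}(\tau)$ of the shuffle, i.e.\ the number of pairs $(i,j)$ with $i \in \{1,\dots,m\}$, $j \in \{1,\dots,n\}$ such that $\tau(j') < \tau(i)$ (where $j'$ is $j$ regarded in the second copy). Hence
\[
x_m \star x_n \;=\; \Bigl(\sum_{\tau \in \mathrm{Sh}(m,n)} q^{\mathrm{inv}(\tau)}\Bigr)\, x_{m+n}.
\]

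Finally, I invoke the classical $q$-analog identity
\[
\sum_{\tau \in \mathrm{Sh}(m,n)} q^{\mathrm{inv}(\tau)} \;=\; \binom{m+n}{m}_q,
\]
which follows, e.g., from the bijection between $(m,n)$-shuffles and lattice paths in an $m \times n$ rectangle, with $\mathrm{inv}(\tau)$ equal to the number of boxes below the path (Stanley, EC1). This gives $x_m \star x_n = \binom{m+n}{m}_q x_{m+n}$, matching the defining relation of $\Gamma_q[x]$. Associativity is automatic on both sides, so the $k$-linear map $x_n \mapsto [1\mid\dots\mid 1]$ is a ring isomorphism.

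The only genuine content is the combinatorial identity relating the length statistic on braid lifts of shuffles to the $q$-binomial coefficient; this is classical and presents no obstacle.
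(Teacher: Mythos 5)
Your proof is correct and follows the same route as the paper: identify the structure constant of $x_m \star x_n$ with the generating function $\sum_\tau q^{\mathrm{inv}(\tau)}$ over $(m,n)$-shuffles, then invoke the classical identity (Stanley, Prop.\ 1.7.1) that this sum equals $\binom{m+n}{m}_q$. The paper's proof is just a one-line citation of that same fact, so your write-up is simply a more detailed version of the intended argument.
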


\begin{proof}

This amounts to the well-known fact (see, e.g., Prop. 1.7.1 of \cite{stanley}) that one may obtain ${n+m \choose m}_q$ via the weighted sum over all $(m, n)$-shuffles $\tau$ of the expression $q^{cr(\tau)}$, where $cr(\tau)$ is the number of crossings in $\tau$.

\end{proof} 

The following is essentially identical to Lemma 3.4 of \cite{callegaro}:

\begin{prop} If $q$ is not a root of unity in $k$, then there is an isomorphism $\Gamma_q[x] \cong k[x_1]$.  If $q$ is a primitive $m\nth$ root of unity, then 
$$\Gamma_q[x] = k[x_1]/x_1^{m} \otimes \Gamma[x_m].$$

\end{prop}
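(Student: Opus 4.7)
The plan is to reduce the statement to a single identity inside $\Gamma_q[x]$. A short induction on $n$ using the multiplication rule $x_n \star x_1 = \binom{n+1}{1}_q x_{n+1} = [n+1]_q\, x_{n+1}$ establishes that
$$x_1^n \;=\; [n]_q!\, x_n$$
in $\Gamma_q[x]$, where $[n]_q! := [n]_q [n-1]_q \cdots [1]_q$. Note also that $\Gamma_q[x]$ is commutative, since $\binom{n+r}{r}_q = \binom{n+r}{n}_q$; this commutativity will let me tensor subalgebras together.

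If $q$ is not a root of unity, then $[n]_q = (1-q^n)/(1-q) \neq 0$ for every $n \ge 1$, so each $[n]_q! \in k^\times$ and $x_n = [n]_q!^{-1}\, x_1^n$. Thus $x_1$ generates $\Gamma_q[x]$ as a $k$-algebra, and since $\{x_n\}_{n\ge 0}$ is a $k$-basis there is no algebraic relation on $x_1$, yielding the first isomorphism $\Gamma_q[x] \cong k[x_1]$.

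For $q$ a primitive $m$th root of unity, the key numerical input is the quantum Lucas identity: for $0 \le s, t < m$ and $a, b \ge 0$,
$$\binom{am+s}{bm+t}_q \;=\; \binom{a}{b}\binom{s}{t}_q,$$
where the left binomial is evaluated at $q$ and the right one is an ordinary integer. This follows by comparing coefficients of $z^{bm+t}$ in the polynomial identity $\prod_{i=0}^{am+s-1}(1-zq^i) = (1-z^m)^a \prod_{i=0}^{s-1}(1-zq^i)$, which is itself a consequence of $\prod_{i=0}^{m-1}(1-zq^i) = 1-z^m$; the stray sign and $q$-power terms cancel by the classical fact that $q^{\binom{m}{2}} = (-1)^{m-1}$. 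Two specializations do all the work. First, $\binom{(a+b)m}{am}_q = \binom{a+b}{a}$, so the subalgebra of $\Gamma_q[x]$ generated by $\{x_{am}\}_{a\ge 0}$ satisfies the divided-power relation (\ref{div_power_eqn}) and is therefore isomorphic to $\Gamma[x_m]$. Second, $\binom{am+s}{s}_q = 1$ for $s<m$, which combined with $x_1^s = [s]_q!\, x_s$ gives
$$x_{am+s} \;=\; [s]_q!^{-1}\, x_1^s \star x_{am}.$$
Moreover, $x_1^m = [m]_q!\, x_m = 0$ since $[m]_q = 0$, so $x_1 \mapsto x_1$ defines a $k$-algebra map $k[x_1]/x_1^m \to \Gamma_q[x]$.

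Using commutativity of $\Gamma_q[x]$, these two subalgebra inclusions assemble into a graded $k$-algebra map $\phi \colon k[x_1]/x_1^m \otimes \Gamma[x_m] \to \Gamma_q[x]$. Surjectivity is the displayed formula for $x_{am+s}$. Injectivity is a dimension count: in each degree $n$, writing $n = am + s$ uniquely with $0 \le s < m$, both sides are one-dimensional over $k$, so $\phi$ is bijective. The only nontrivial ingredient is the quantum Lucas identity; everything else is combinatorial bookkeeping.
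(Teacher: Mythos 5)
Your proof is correct. The paper itself gives no argument here — it simply defers to Lemma 3.4 of Callegaro — so there is nothing to compare against, but your route (the identity $x_1^n = [n]_q!\,x_n$, plus the $q$-Lucas theorem $\binom{am+s}{bm+t}_q = \binom{a}{b}\binom{s}{t}_q$ derived from $\prod_{i=0}^{m-1}(1-zq^i)=1-z^m$) is the standard one, and all the details check out: the sign bookkeeping via $q^{\binom{m}{2}}=(-1)^{m-1}$ does cancel, the two specializations give exactly the divided-power relation on $\{x_{am}\}$ and the factorization $x_{am+s}=[s]_q!^{-1}x_1^s\star x_{am}$, and the degreewise dimension count closes the argument.
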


If we include the sign twist $\epsilon$, we have $\mA(V^*_\epsilon) = \Gamma_{-q}[x]$, so
$$H_j(B_n, V^{\otimes n}) \cong \Ext_{\Gamma_{-q}[x]}^{n-j, n}(k, k).$$
We may compute this as follows:
\begin{enumerate}
\item \label{item1} If $-q$ is not a root of unity in $k$, 
$$\Ext_{\Gamma_{-q}[x]}^{*, *}(k, k) = \Lambda[z_1]$$
is an exterior algebra on a generator $z_1$ of topological degree $0$, in filtration $1$.  That is,
$$H_j(B_n, V^{\otimes n}) = \left\{\begin{array}{ll} k, & j=0, \; \mbox{and} \; n=0, 1 \\ 0, & \mbox{otherwise.} \end{array} \right.$$

\item If $q=1$, this is the homology of the braid groups with trivial coefficients, as in the previous section. 

\item If $q=-1$, $\Gamma_{-q}[x] = \Gamma[x_1]$ is the divided power algebra on a generator in degree 1.  If $k$ has characteristic $0$, this is isomorphic to $k[x_1]$, with the same cohomology as in the first case.  In positive characteristic $p$, the cohomology of this algebra was computed in (\ref{char2_div_ext_eqn}) and (\ref{charp_div_ext_eqn}) with $m=1$.

\item If $-q$ is a primitive $m^\nth$ root of unity in $k$, where $m > 2$, then
\beqn \Ext_{\Gamma_{-q}[x]}^{*, *}(k, k) = \Lambda[z_1] \otimes k[y_m] \otimes \Ext_{\Gamma[x_m]}(k, k) \label{ext_div_eqn} \eeqn
where $z_1$ has topological degree 0, filtration 1, and $y_m$ has topological degree $m-2$, filtration $m$. The cohomology of $\Gamma[x_m]$ is described in the previous section.

\end{enumerate}

One important subtlety which is hidden in the presentation of (\ref{ext_div_eqn}) as a tensor product is that the tensor factors do not actually commute as usual in cohomology, but rather in a braided sense.  More specifically, it was shown in \cite{mpsw} that the cohomology of a braided Hopf algebra is a braided commutative algebra: cohomology classes $\alpha$ and $\beta$ satisfy
$$\alpha \beta = \mu(\sigma(\alpha \otimes \beta)),$$
where $\mu$ is multiplication, and $\sigma$ the braiding in our category.  In the case at hand, this braiding is given by multiplication by powers of $\pm q$, and depends upon cobar complex representatives of $\alpha$ and $\beta$.  However, we will see in this example that the only departure from graded commutativity is via altered signs.

We note that the cobar complex $\Omega^*(A^*) = \Hom(B_*(A), k)$ which computes $\Ext_A(k, k)$ can be presented as the tensor algebra $T(\Sigma I)$ on the shift of the augmentation ideal $I \subseteq A^*$.  The braiding on $A^*$ is dual to that on $A$, and the braiding on the shift $\Sigma I$ the negative of the braiding on $I$.  This is extended multiplicatively to the tensor algebra\footnote{See section \ref{index_section} for more details on this.}.

For instance, when $-q$ is a primitive $m^\nth$ root of unity, cobar complex representatives for $z_1$ and $y_m$ are dual to $[x_1]$ and $[x_1^{m-1}|x_1]$, respectively.  Incorporating the shifts, we have 
$$\sigma([x_1] \otimes [x_1^{m-1}|x_1]) = (-1)^{1 \cdot 2} (-q)^{1 \cdot m}[x_1^{m-1}|x_1] \otimes [x_1] = [x_1^{m-1}|x_1] \otimes [x_1]$$
so $z_1$ and $y_m$ do in fact commute.  However, there is also a class $z_m \in \Ext_{\Gamma[x_m]}(k, k)$  of topological degree $m-1$ and filtration $m$; it is represented by a class dual to $[x_m]$.  Then $z_1$ and $z_m$ anticommute: 
$$\sigma([x_1]\otimes[x_m]) = (-1)^{1\cdot1} (-1)^{1 \cdot m} [x_m] \otimes [x_1] = - [x_m]\otimes[x_1]$$
which may be surprising, given that the topological dimension of $z_1$ is $0$.

\section{Algebraic properties of quantum shuffle algebras} 

In this section we work with a fixed braided vector space $V$ over a field $k$ given by a Yetter-Drinfeld module for a group $G$.  We will write $\mA = \mA(V)$ and $\mB = \mB(V)$.  The goal of this section is to decompose the associated graded algebra $\mA^{\gr}$ of the filtration on $\mA$ by powers of its augmentation ideal into a (twisted) tensor product, and begin to compute its cohomology via Cartan-Eilenberg spectral sequences.

\subsection{Twisted tensor product decompositions}

Let $\mC$ and $\mD$ be graded, braided Hopf algebras in $\YD^G_G$, and let $\rho: \mC \to \mD$ be a map of Yetter-Drinfeld coalgebras.  This makes $\mC$ into a right $\mD$-comodule.  Recall that the cotensor product with $k$ is
$$\mC \square_{\mD} k = \{ x \in \mC \; | \; (\id \otimes \rho) \circ \Delta(x) = x \otimes 1 \}.$$
This is sometimes written $\mC^{\co \mD}$.  If $\rho$ is actually a map of bialgebras, then it is easy to see that $\mC \square_{\mD} k$ is a subalgebra of $\mC$.  The following is a special case of Theorem 3.2 of \cite{milinski-schneider}, which in turn is a case of the braided formulation of the fundamental theorem of Hopf modules.

\begin{thm} \label{ms_thm} Assume that $\rho: \mC \to \mD$ is a map of Yetter-Drinfeld bialgebras, and $i: \mD \to \mC$ is a map of Yetter-Drinfeld bialgebras with $\rho \circ i = \id_{\mD}$.  Then the multiplication map
$$\mu: (\mC \square_{\mD} k) \otimes \mD \to \mC$$
which carries $x \otimes y$ to $x \star i(y)$ is an isomorphism of left $\mC \square_{\mD} k$-modules, and right $\mD$-comodules.

\end{thm}

One can say a little more.  Let $\alpha: \mD \otimes (\mC \square_{\mD} k) \to (\mC \square_{\mD} k) \otimes \mD$ be the map carrying $d \otimes c$ to $\mu^{-1}(d \star c)$.  Define the \emph{twisted tensor product} $(\mC \square_{\mD} k) \otimes_{\alpha} \mD$ to be $(\mC \square_{\mD} k) \otimes \mD$, equipped with the multiplication
$$(c_1 \otimes d_1) \cdot (c_2 \otimes d_2) := \sum (c_1 c_2^i) \otimes (d_1^i  d_2)$$
where $\alpha(d_1 \otimes c_2) = \sum_i c_2^i \otimes d_1^i$.  Given Theorem \ref{ms_thm}, the following is nearly definitional:

\begin{cor} \label{twisted_tensor_cor} The map $\mu: (\mC \square_{\mD} k) \otimes_{\alpha} \mD \to \mC$ is an isomorphism of Yetter-Drinfeld algebras. \end{cor}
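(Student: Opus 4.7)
The plan is to leverage Theorem \ref{ms_thm} as the core result; once $\mu$ is known to be a bijection, the remaining content of Corollary \ref{twisted_tensor_cor} is essentially formal, being a matter of verifying that (a) the twisted multiplication is well-defined and associative, (b) $\mu$ intertwines this multiplication with $\star$, and (c) $\mu$ respects the Yetter-Drinfeld structure.

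First I would observe that $\alpha$ is well-defined: for $d \otimes c \in \mD \otimes (\mC \square_{\mD} k)$, the element $i(d) \star c \in \mC$ has a unique preimage in $(\mC \square_{\mD} k) \otimes \mD$ because Theorem \ref{ms_thm} makes $\mu$ bijective. (Here and throughout one identifies $d$ with $i(d)$.) Then the verification that $\mu$ is an algebra map is essentially tautological by construction of $\alpha$. Using associativity of $\star$ and the fact that $i$ is an algebra map, if $\alpha(d_1 \otimes c_2) = \sum_j c_2^j \otimes d_1^j$, then
\begin{align*}
\mu(c_1 \otimes d_1) \star \mu(c_2 \otimes d_2)
&= c_1 \star i(d_1) \star c_2 \star i(d_2) \\
&= c_1 \star \mu\bigl(\alpha(d_1 \otimes c_2)\bigr) \star i(d_2) \\
&= \sum_j (c_1 \star c_2^j) \star i(d_1^j \star d_2) \\
&= \mu\bigl((c_1 \otimes d_1) \cdot (c_2 \otimes d_2)\bigr).
\end{align*}

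With this identity in hand, associativity (and unitality) of the twisted product comes for free: since $\mu$ is a $k$-linear bijection, it transports the associative unital algebra structure on $\mC$ back to $(\mC \square_{\mD} k) \otimes_\alpha \mD$, and this transported structure must agree with the prescribed twisted multiplication by the computation above. So no separate associativity check for $\cdot$ is needed.

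Finally, to promote $\mu$ to an isomorphism of Yetter-Drinfeld algebras, equip $(\mC \square_{\mD} k) \otimes_\alpha \mD$ with the standard tensor product Yetter-Drinfeld structure (diagonal $G$-action, multiplicative $G$-grading). The cotensor product $\mC \square_{\mD} k$ is a Yetter-Drinfeld subalgebra of $\mC$ because $\rho$ is a map in $\YD^G_G$; $i$ is by hypothesis a morphism in $\YD^G_G$; and multiplication in $\mC$ is $G$-equivariant and grading-preserving. Hence $\mu(c \otimes d) = c \star i(d)$ is a morphism of Yetter-Drinfeld modules, and combined with the previous step it is an isomorphism of Yetter-Drinfeld algebras. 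The main potential obstacle is really only the bookkeeping around $\alpha$ — ensuring the formulas for the twisted product are unambiguous and that one has not implicitly used commutativity or any unwarranted compatibility between the two factors — but by defining $\alpha$ directly through $\mu^{-1}$ this bookkeeping is minimized.
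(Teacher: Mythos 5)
Your proof is correct and is exactly the argument the paper has in mind: the paper offers no written proof, remarking only that the corollary is ``nearly definitional'' given Theorem \ref{ms_thm}, and your write-up (bijectivity from Theorem \ref{ms_thm}, multiplicativity tautologically from the definition of $\alpha$ via $\mu^{-1}$, transport of associativity, and the Yetter-Drinfeld compatibility from $i$ and $\star$ being morphisms in $\YD^G_G$) is precisely the unwinding of that remark.
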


\subsection{The word-length filtration on $\mA$}

In this section, we examine the filtration of $\mA$ by powers of its augmentation ideal, and decompose the associated graded algebra $\mA^{\gr}$ into a twisted tensor product. 

Write $\mA_{>0}$ for the augmentation ideal of the quantum shuffle algebra, and let $Q(\mA) = (\mA_{>0})/(\mA_{>0})^2$ denote the graded module of indecomposables in $\mA$.  
Define a decreasing filtration of $\mA$ by setting $F_k(\mA) = (\mA_{>0})^k$ to be the $k\nth$ power of the augmentation ideal.  Then $F_k(\mA)$ is generated by words in the indecomposables in $\mA$ of length at least $k$.  This is obviously a filtration by ideals; in fact, multiplication in $\mA$ carries $F_p(\mA) \otimes F_{p'}(\mA)$ to $F_{p+p'}(\mA)$.  

\begin{defn} Let $\grA$ be the bigraded $k$-algebra associated to this filtration; $\grA_{p, q} = F_p(\mA_q)/F_{p+1}(\mA_q)$. \end{defn}

Notice that since $\mA$ is an algebra in $\YD^G_G$, the ideal $\mA_{>0}$ and all of its powers are Yetter-Drinfeld modules.  Therefore $\grA$ is a bigraded Yetter-Drinfeld algebra.  We note in particular that $Q(A) = \grA_{1, *}$ is a Yetter-Drinfeld module.

If we filter the tensor product $\mA \otimes \mA$ by 
$$F_k(\mA \otimes \mA) = \sum_{p+p'=k} F_p(\mA) \otimes F_{p'}(\mA),$$
then the coproduct on $\mA$ respects the filtration; $\Delta: F_k(\mA) \to F_k(\mA \otimes \mA)$.  This may be seen as follows: for an element $a \in F_1(\mA) = \mA_{>0}$, it is vacuously true that 
$$\Delta(a) \in F_1(\mA) \otimes F_0(\mA) + F_0(\mA) \otimes F_1(\mA) = \mA_{>0} \otimes \mA + \mA \otimes \mA_{>0},$$
since the diagonal is graded, and the right side is $(\mA \otimes \mA)_{>0}$.  For a generator $b = a_1 \cdots a_{k} \in F_k(\mA)$, $\Delta(b)$ is the product (with respect to the twisted multiplication in $\mA \otimes \mA$) 
$$\Delta(b) = \star_{i=1}^k \Delta(a_i) \in \star_{i=1}^k (F_1(\mA) \otimes F_0(\mA) + F_0(\mA) \otimes F_1(\mA)) \subseteq \sum_{p+q = k} F_p(\mA) \otimes F_q(\mA).$$
In the last containment, we have used the fact that the definition of multiplication in $\mA \otimes \mA$ uses the $G$-module structure as in equation (\ref{twist_eqn}), and that $F_k(\mA)$ is invariant under the $G$ action.  Therefore, $\Delta$ descends to a coassociative coproduct on $\grA$ of the form
$$\Delta: \grA_{p, q} \to \bigoplus_{a+b = p} \bigoplus_{c+d = q} = \grA_{a, c} \otimes \grA_{b, d}$$
Here we use the fact that $F_p(\mA \otimes \mA)/F_{p+1}(\mA \otimes \mA) = \oplus_{a+b = p} \grA_{a, *} \otimes \grA_{b, *}$.

Additionally, the antipode on $\mA$ preserves the filtration, so we may conclude:

\begin{prop} The $k$-algebra $\grA$ is a braided, bigraded Hopf algebra in the Yetter-Drinfeld category $\YD_G^G$. \end{prop}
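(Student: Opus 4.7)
The plan is to assemble the verifications essentially carried out in the preceding paragraphs, so most of what remains is routine bookkeeping. The text has already shown that the multiplication on $\mA$ carries $F_p(\mA) \otimes F_q(\mA)$ to $F_{p+q}(\mA)$ and that the coproduct satisfies $\Delta(F_k(\mA)) \subseteq \sum_{p+q=k} F_p(\mA) \otimes F_q(\mA)$. Since the $G$-action on $\mA$ is by Yetter-Drinfeld algebra automorphisms, it fixes the augmentation and therefore preserves the augmentation ideal $\mA_{>0}$ and each of its powers; the $G$-grading likewise restricts to each $F_k(\mA)$ because powers of a $G$-graded ideal are $G$-graded. Consequently $\grA$ inherits the structure of a bigraded Yetter-Drinfeld algebra and a bigraded Yetter-Drinfeld coalgebra, together with unit and counit.

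The piece that still needs filling in is that the antipode $\chi \colon \mA \to \mA$ preserves the filtration, i.e.\ $\chi(F_k(\mA)) \subseteq F_k(\mA)$. I would argue this by induction on $k$. The recursion
\[ \chi([a_1|\dots|a_n]) = -\sum_{j=1}^{n} [a_1|\dots|a_j] \star \chi([a_{j+1}|\dots|a_n]) \]
shows directly that $\chi$ sends $\mA_{>0} = F_1(\mA)$ into $F_1(\mA)$, since every term on the right is a product lying in the augmentation ideal. For the inductive step, I would invoke the standard fact that on a braided Hopf algebra the antipode is a twisted anti-algebra map: a product $a_1 \star \cdots \star a_k$ of $k$ elements of $\mA_{>0}$ is sent by $\chi$ to a sum of products of $\chi(a_i)$'s, rearranged via braidings built from the $G$-action. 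Since both the multiplication and the braidings preserve the filtration, and each $\chi(a_i)$ lies in $F_1(\mA)$ by the base case, the image lands in $F_k(\mA)$.

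With the antipode compatibility in hand, each Hopf algebra axiom on $\grA$ follows by passing the corresponding identity on $\mA$ to the associated graded. Associativity, coassociativity, unitality, and counitality descend immediately. The compatibility of $\Delta$ with the twisted multiplication on $\grA \otimes \grA$ follows from the corresponding fact on $\mA$ once one observes that the twisting $\sigma_{m,n}$ of formula (\ref{twist_eqn}) is defined entirely in terms of the $G$-action and $G$-grading, both of which descend to $\grA$ without change; hence the twisted multiplication on $\grA \otimes \grA$ is itself the associated graded of that on $\mA \otimes \mA$. The antipode axiom transports in the same manner. The only nontrivial step is the antipode preservation of the filtration; everything else is formal verification that the associated-graded functor respects the Hopf-algebra identities in $\YD^G_G$.
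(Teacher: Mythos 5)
Your proof is correct and takes essentially the same route as the paper, whose ``proof'' of this proposition is the preceding discussion: multiplicativity of the filtration $F_k(\mA)$, the computation showing $\Delta(F_k(\mA)) \subseteq \sum_{p+q=k} F_p(\mA)\otimes F_q(\mA)$ using $G$-invariance of the $F_k$, and the unproved assertion that the antipode preserves the filtration. Your argument for the antipode (base case from the defining recursion, then twisted anti-multiplicativity together with filtration-preservation of the braiding) correctly fills in the one step the paper merely asserts.
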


Notice that since every indecomposable of $\mA$ has degree at least $1$, $F_p(\mA_q) = 0$ for $p>q$.  Also, the natural composite $i: \mB_p \to \mA_p \to \grA_{p, p} = F_p(\mA_p)$ is an isomorphism of vector spaces, since $\mB_p$ is precisely the subspace of $\mA_p$ generated by words of length $p$ (in the indecomposables $V$ of degree 1).   In fact, the diagonal subspace $\sum_{p} \grA_{p, p}$ is a Hopf subalgebra of $\grA$, and $i$ is a Hopf algebra isomorphism of $\mB$ onto the diagonal subspace of $\grA$.  If we define $L := \sum_{p<q} \grA_{p, q}$, then $L$ is a Hopf ideal in $\grA$.  The projection $\rho$ onto the diagonal subspace yields an isomorphism of Hopf algebras 
$$\grA / L \cong \sum_{p} \grA_{p, p} \cong \mB.$$

\begin{defn} Let $\mE := \grA \square_{\mB} k$ be the cotensor product of $\grA$ over $\mB$.  \end{defn}

Then $\mE$ is a subalgebra of $\grA$, and Corollary \ref{twisted_tensor_cor} (with $\mC = \grA$, and $\mD = \mB$) says: 

\begin{prop} \label{free_prop_3}

Multiplication gives an algebra isomorphism $\mu: \mE \otimes_\alpha \mB \to \grA$ of the twisted tensor product of $\mE$ and $\mB$ with $\grA$. 

\end{prop}

The structure of the algebra $\mE$ is far from clear.  However, since $\mA^{\gr}$ and $\mB$ agree in degree 1, $\mE$ is generated as an algebra in degrees 2 and higher.

\subsection{Change of rings and May spectral sequences} \label{ss_sec}

Our goal in this section is to develop two spectral sequences to assist in the computation of the cohomology of quantum shuffle algebras.  This cohomology is bigraded: recall that for a graded algebra $A$ and graded $A$-modules $M$ and $N$, $\Ext_A^{p, q}(M, N)$ is the component of $R\Hom_A^p(M, N)$ of degree $q$.  That is, if $R_\bullet \to M$ is a graded, projective resolution of $M$, then $\Ext_A^{p, q}(M, N)$ consist of those cohomology classes represented by $f:R_p \to N$ which decrease degree by $q$.

Our first spectral sequence computes the cohomology of associated graded algebra for the word-length filtration on quantum shuffle algebras:

\begin{prop} \label{ss_prop}

There is a first-quadrant spectral sequence of rings:
$$\Ext^{p, r}_{\mB}(k, \Ext^{q, s}_{\mE}(k, k)) \implies \Ext^{p+q, r+s}_{\grA}(k, k).$$

\end{prop}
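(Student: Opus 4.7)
The plan is to obtain the spectral sequence as the standard change-of-rings spectral sequence associated to the short exact sequence of braided graded Hopf algebras $\mE \hookrightarrow \grA \twoheadrightarrow \mB$, with Proposition \ref{free_prop_3} as the key structural input. That proposition gives $\mE \otimes_\alpha \mB \iso \grA$ as Yetter--Drinfeld algebras, so $\grA$ is a free left $\mE$-module on a lift of any $k$-basis of $\mB$. The quotient map $\pi \colon \grA \to \mB$ annihilates $\mE_{>0}$, and a Hilbert series count using the twisted tensor product decomposition gives $\ker \pi = \mE_{>0} \cdot \grA$; this expresses $\mE$ as a ``normal'' sub-Hopf-algebra of $\grA$ in the appropriate braided sense.

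Next, I would set up a composition of functors. For any left $\grA$-module $M$, the coinvariants-type object $\Hom_\mE(M, k)$ carries a natural right $\mB$-module action coming from the right action of $\grA$ on $\Hom_\mE(M, k)$, which factors through $\mB = \grA / \mE_{>0}\grA$ because $\mE_{>0}$ acts trivially on $k$. One then has the adjunction
$$\Hom_\grA(M, k) \;\cong\; \Hom_\mB(k, \Hom_\mE(M, k)).$$
Freeness of $\grA$ over $\mE$ ensures that any projective resolution $P_\bullet \to k$ over $\grA$ is also a projective $\mE$-resolution, so the Grothendieck spectral sequence for this composite of derived functors --- equivalently, one of the two spectral sequences of the double complex $\Hom_\mB(k, \Hom_\mE(P_\bullet, I^\bullet))$ for an injective $\mB$-resolution $I^\bullet$ of $k$ --- delivers
$$\Ext^p_\mB(k, \Ext^q_\mE(k, k)) \;\Rightarrow\; \Ext^{p+q}_\grA(k, k).$$
The bigrading by internal degree is preserved because every algebra, resolution and structure map can be taken graded with respect to the $\mA$-grading, and the multiplicative structure is inherited from the three Hopf algebra structures via the standard Cartan--Eilenberg diagonal-on-resolutions argument.

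I expect the principal technical obstacle to be verifying that the $\mB$-action on $\Hom_\mE(M, k)$ is well-defined and compatible with the Yetter--Drinfeld structure and with the twisting map $\alpha$ of Proposition \ref{free_prop_3}. This amounts to checking that $\mE$ really is braided-normal in $\grA$ --- which is where the full Hopf-algebra (and not merely bialgebra) hypotheses on the extension genuinely enter --- after which the rest of the argument is formal from the general theory of Grothendieck and Cartan--Eilenberg spectral sequences.
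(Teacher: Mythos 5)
Your proposal is correct and follows essentially the same route as the paper: both obtain the result as the Cartan--Eilenberg change-of-rings spectral sequence for $\grA \to \mB$, using the freeness of $\grA$ over $\mE$ from Proposition \ref{free_prop_3} to identify the coefficient term with $\Ext_{\mE}(k,k)$. The paper's packaging is marginally lighter --- it takes the $E_2$-term in the form $\Ext_{\mB}(k, \Ext_{\grA}(\mB,k))$ and then notes that a free $\mE$-resolution $R_\bullet \to k$ induces a free $\grA$-resolution $R_\bullet \otimes_{\mE} \grA \to \mB$, so that $\Ext_{\grA}(\mB,k) \cong \Ext_{\mE}(k,k)$ --- which sidesteps the braided-normality verification you flag as the main technical obstacle.
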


\begin{proof}

This is a consequence of a standard Cartan-Eilenberg change of rings spectral sequence associated to the quotient ring map $\pi: \grA \to \mB$:
$$\Ext^{p, r}_{\mB}(k, \Ext^{q, s}_{\grA}(\mB, k)) \implies \Ext^{p+q, r+s}_{\grA}(k, k).$$
To get the result as stated, we note: if $R_\bullet \to k$ is a free $\mE$-resolution of $k$, then $R_{\bullet} \otimes_{\mE} \grA$ is a free $\grA$-resolution of $k \otimes_{\mE} \grA = \mB$.  Specifically, $R_{p} \otimes_{\mE} \grA$ is $\grA$-free, since it is induced from a free $\mE$-module.  The exactness of the complex follows from Proposition \ref{free_prop_3}: $\grA$ is free over $\mE$, hence flat.

Therefore, $\Ext_{\grA}(\mB, k)$ and $\Ext_{\mE}(k, k)$ are both computed as the cohomology of the complex 
$$\Hom_{\grA}(R_{\bullet} \otimes_{\mE} \grA, k) = \Hom_{\mE}(R_{\bullet}, k)$$
and so are equal.

\end{proof}

Additionally, we note that the since the filtration of $\mA$ by powers of the augmentation ideal is multiplicative, we immediately have a spectral sequence (see, e.g., \cite{may_restricted})
$$\Ext_{\grA}(k, k) \implies \Ext_{\mA}(k, k).$$

\section{Cohomological growth}

The purpose of this section is to explore the growth rate of various (bi-)graded modules associated a braided vector space $V$.  While some of these arguments work for arbitrary braided vector spaces, our focus will be on Yetter-Drinfeld modules for a finite group $G$.  This assumption allows us to show in Proposition \ref{sym_prop} that the associated ring $R$ of components has polynomial growth.  In section \ref{hyp_section} we will formulate several hypotheses on the growth of the cohomology of the quantum shuffle and Nichols algebras associated to $V$, and show how these are related; this will allow us, in subsequent sections, to focus on bounding the cohomology of the Nichols algebra.

\subsection{The ring $R$} \label{ring_comp_section} 

We continue to let $\mA(V_\epsilon^*)$ be the quantum shuffle algebra associated to the sign twist $V_{\epsilon}^*$ of the dual of a braided vector space $V$.  Then define $R$ to be the diagonal subalgebra
$$R := \bigoplus_{n=0}^\infty \Ext_{\mA(V^*_\epsilon)}^{n, n}(k, k) $$
of the cohomology.  Notice that we may make $k$ into an $R$-module via the augmentation $R \to k = \Ext_{\mA(V^*_\epsilon)}^{0, 0}(k, k)$.
We will refer to $R$ as the \emph{ring of coinvariants} or \emph{ring of components} associated to $V$, courtesy of the isomorphism
$$\Ext_{\mA(V^*_\epsilon)}^{n, n}(k, k) \cong H_0(B_n, V^{\otimes n})$$
of Corollary \ref{main_cor}.

\begin{defn}

For $n \geq 0$, let $r(n) := \rk (R_n)$.  

\end{defn}

We will see in Proposition \ref{sym_prop} that when $V$ is a Yetter-Drinfeld module, $r(n)$ grows polynomially as a function of $n$, and that its degree is bounded by $\# G \cdot \rk(V) -1$.  This bound is almost certainly not optimal, as can be seen in a number of cases.

$R$ may also be constructed as the diagonal cohomology of a number of other algebras:

\begin{lem} \label{components_lemma}

For any graded algebra $A$ with $A_1 = V^*_\epsilon$ whose relations amongst these generators in degree 2 are the same as those in $\mA(V^*_\epsilon)$, $\Ext_{A}^{n, n}(k, k) \cong R_n$. 

\end{lem}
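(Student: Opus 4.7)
The plan is to pass to the bar complex computing $\Tor^A_{*,*}(k,k)$ and observe that the diagonal bigraded piece $\Tor^A_{n,n}(k,k)$ depends only on $A_1$ together with the degree-2 quadratic data of $A$; then dualize.

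Concretely, let $B_\bullet = B_\bullet(k,A,k)$ be the reduced bar complex, with $B_p = (A_{>0})^{\otimes p}$ bigraded by homological degree $p$ and internal degree. Since every element of $A_{>0}$ has internal degree at least $1$, the component $B_{p,n}$ of internal degree $n$ vanishes whenever $p>n$. In the extremal case $p=n$ every tensor factor must live in $A_1$, so $B_{n,n} = A_1^{\otimes n} = V_\epsilon^{\otimes n}$. In bar degree $n-1$ exactly one factor lies in $A_2$ and the rest in $A_1$, so $B_{n-1,n}$ is a sum of copies of $V_\epsilon^{\otimes i} \otimes A_2 \otimes V_\epsilon^{\otimes n-2-i}$, and the differential $B_{n,n}\to B_{n-1,n}$ is determined entirely by the multiplication map $A_1\otimes A_1 \to A_2$. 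Because $B_{n+1,n}=0$, we therefore have
\[ \Tor^A_{n,n}(k,k) = \ker\bigl(B_{n,n} \to B_{n-1,n}\bigr), \]
a kernel that depends only on $A_1$ and the kernel of $V_\epsilon\otimes V_\epsilon \to A_2$, i.e.\ the quadratic relations.

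Now the hypothesis on $A$ says exactly that $A_1 = V_\epsilon = \mA(V_\epsilon)_1$ and that the quadratic relations coincide with those of $\mA(V_\epsilon)$. Hence the subcomplex $B_{*,n}$ of internal degree $n$ is isomorphic to the corresponding subcomplex of the bar complex of $\mA(V_\epsilon)$ in the bidegrees $(n,n)$ and $(n-1,n)$, which already suffices to identify the kernels. Dualizing yields
\[ \Ext^{n,n}_A(k,k) \;\cong\; \Ext^{n,n}_{\mA(V_\epsilon)}(k,k) \;=\; R_n. \]
The same argument identifies the Yoneda/shuffle multiplication on the diagonal: on cobar cochains represented by tensors of length-one factors, the Yoneda product is juxtaposition, and juxtaposition commutes with the restriction to the diagonal part. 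Thus the isomorphism is one of graded rings.

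For the second assertion, recall that an algebra $A$ generated in degree $1$ with quadratic relations is Koszul precisely when $\Ext^{i,j}_A(k,k)=0$ for $i\neq j$. Assuming this, all of $\Ext_A(k,k)$ is concentrated on the diagonal, so
\[ \Ext_A(k,k) \;=\; \bigoplus_n \Ext^{n,n}_A(k,k) \;\cong\; \bigoplus_n R_n \;=\; R \]
as bigraded rings, completing the proof. The only delicate point — and the one to write out with a touch of care — is the identification of the Yoneda product on the diagonal part of $\Ext_A(k,k)$ with the ring structure on $R$; everything else is a direct inspection of the first two terms of the bar complex in fixed internal degree.
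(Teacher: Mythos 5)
Your proposal is correct and follows essentially the same route as the paper: the paper identifies $\Ext^{n,n}_A(k,k)$ as the cokernel of the cobar differential $\bigoplus_i (A_1^*)^{\otimes i}\otimes A_2^*\otimes (A_1^*)^{\otimes n-2-i}\to (A_1^*)^{\otimes n}$, which is the $k$-linear dual of your identification of $\Tor^A_{n,n}(k,k)$ as the kernel of the bar differential out of $A_1^{\otimes n}$, and both observations show the answer depends only on $A_1$ and the quadratic relations. Your additional care with the Yoneda product on the diagonal is a reasonable supplement to the paper's terser argument.
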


Notice that the Nichols algebra $\mB(V^*_\epsilon) \leq \mA(V^*_\epsilon)$ is a minimal algebra with such generators and relations in degrees 1 and 2.

\begin{proof}

In the cobar complex $\Omega (\mA(V^*_\epsilon)^*)$ which computes $\Ext$, the only contribution to the diagonal bidegree $(n, n)$ must come from classes of the form $a_1 \otimes \dots \otimes a_n$ for $a_i \in V_\epsilon = \mA_1^*$.  Further, $\Ext^{n, n}_{\mA(V^*_\epsilon)}(k, k)$ can be written as the cokernel of a differential of the form 
\beqn \label{cobar_diff_eqn} \bigoplus_{i=0}^{n-2} (\mA^*_1)^{\otimes i} \otimes \mA^*_2 \otimes (\mA^*_1)^{\otimes n-2-i} \to (\mA_1^*)^{\otimes n}.\eeqn
Thus the cohomology in this bidegree is entirely computed on any algebra $A$ of the form indicated.

\end{proof}

\subsection{Growth of braided commutative algebras} 

An algebra $A$ in $\YD^G_G$ is \emph{braided commutative} if $\mu = \mu \circ \sigma$, where $\mu$ is its multiplication and $\sigma$ the braiding in $\YD^G_G$.  In this section we examine the growth of free braided commutative algebras.

\begin{defn}

Let $V$ be a Yetter-Drinfeld module for a group $G$.  The \emph{free braided commutative algebra} generated by $V$ is
$$\Sym_{\br}(V) := \bigoplus_{n=0}^\infty (V^{\otimes n})_{B_n}$$
where $(V^{\otimes n})_{B_n}$ indicates the coinvariants for the $B_n$-action.

\end{defn}

It is straightforward to verify that this does indeed have the requisite universal property: if $A$ is a braided commutative algebra, then there is a natural bijection between maps $V \to A$ in $\YD^G_G$, and algebra maps $\Sym_{\br} (V )\to A$.

Notice that the terms in the free braided commutative algebra may be identified in terms of either braid group homology or the cohomology of the quantum shuffle algebra $\mA(V^*_\epsilon)$:
$$(V^{\otimes n})_{B_n} = V^{\otimes n} \otimes_{k[B_n]} k = H_0(B_n, V^{\otimes n}) = \Ext^{n, n}_{\mA(V^*_{\epsilon})}(k, k)$$
so we may conclude:

\begin{prop} \label{sym_br_prop} There is a graded ring isomorphism $R \cong \Sym_{br}(V)$. \end{prop}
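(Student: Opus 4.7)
The plan is to combine the graded-module identification provided by Corollary~\ref{main_cor} with the statement there about the ring structure, and then to recognise the resulting product on $\bigoplus_n (V^{\otimes n})_{B_n}$ as the free braided commutative algebra product. The underlying $k$-module isomorphism is already in hand: by Corollary~\ref{main_cor}, in each internal degree $n$ we have $R_n = \Ext^{n,n}_{\mA(V_\epsilon)}(k,k)\cong H_0(B_n;V^{\otimes n}) = (V^{\otimes n})_{B_n}$, which is by definition the $n$th graded piece of $\Sym_{\br}(V)$. What remains is to check that the ring structures coincide.

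Next, I would match the multiplications. The same corollary identifies the Yoneda product on $\bigoplus_n \Ext^{*,n}_{\mA(V_\epsilon)}(k,k)$ with the little-disks product on $\bigoplus_n H_*(B_n;V^{\otimes n})$ coming from side-by-side juxtaposition of configurations; restricting to the diagonal $*=0$ piece, this is the product on coinvariants induced by the block inclusion $B_m\times B_n\hookrightarrow B_{m+n}$ together with the tautological identification $V^{\otimes m}\otimes V^{\otimes n} = V^{\otimes(m+n)}$, namely $[v]\otimes[w]\mapsto [v\otimes w]$. This is manifestly the multiplication on $\Sym_{\br}(V)$ induced by the tensor product.

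Finally, I would verify that this juxtaposition product on $\bigoplus_n(V^{\otimes n})_{B_n}$ genuinely realises the free braided commutative algebra in $\YD^G_G$. Braided commutativity is immediate from the construction: the braiding $V^{\otimes m}\otimes V^{\otimes n}\to V^{\otimes n}\otimes V^{\otimes m}$ in $\YD^G_G$ is, by Proposition~\ref{monoidal_equiv_prop} and the formula used to twist the multiplication on $\mA(V)\otimes\mA(V)$ in Section~\ref{qsa_section}, the action of the explicit braid $\sigma_{m,n}\in B_{m+n}$ that moves the first $m$ strands past the last $n$. Since $\sigma_{m,n}$ lies in $B_{m+n}$, the classes $[v\otimes w]$ and $[\sigma_{m,n}(v\otimes w)]$ agree in $(V^{\otimes(m+n)})_{B_{m+n}}$, giving $\mu = \mu\circ\sigma$. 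For the universal property, given a braided commutative algebra $A\in\YD^G_G$ and a $\YD^G_G$-map $f\colon V\to A$, the iterated multiplication $F_n\colon V^{\otimes n}\to A$ factors through $(V^{\otimes n})_{B_n}$ precisely because each elementary braid $\sigma_i$ acts trivially on the image, and the assembled map is the unique algebra extension of $f$.

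The one subtle point is the second step: one has to be careful that the braiding in $\YD^G_G$ between the tensor powers $V^{\otimes m}$ and $V^{\otimes n}$ is literally implemented by the specific braid $\sigma_{m,n}\in B_{m+n}$ acting on $V^{\otimes(m+n)}$. Once this monoidal-functoriality is unpacked from Proposition~\ref{monoidal_equiv_prop}, both braided commutativity of the product and the universal property reduce to a short diagram chase, and the isomorphism $R\cong\Sym_{\br}(V)$ follows.
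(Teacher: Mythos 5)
Your proof is correct and follows essentially the same route as the paper: the paper simply observes that $(V^{\otimes n})_{B_n} = H_0(B_n, V^{\otimes n}) = \Ext^{n,n}_{\mA(V_\epsilon)}(k,k)$ and invokes the ring-structure statement of Corollary~\ref{main_cor}, having already declared the universal property of $\Sym_{\br}(V)$ (defined as $\bigoplus_n (V^{\otimes n})_{B_n}$ with the juxtaposition product) to be straightforward. Your additional verification that $\sigma_{m,n}\in B_{m+n}$ forces braided commutativity on coinvariants, and your check of the universal property, are exactly the details the paper leaves implicit.
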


While the notion of braided commutativity is generally distinct from commutativity in the usual sense, many familiar facts do carry over in this setting.  For instance, if $I$ is a homogenous\footnote{This does not quite hold for inhomogenous ideals: For instance, if $i_1$ and $i_2$ are homogenous of degrees $g_1 \neq g_2$, then the left ideal generated by $i_1+i_2$ need not be a right ideal, since
$$(i_1 + i_2) \cdot r = i_1 \cdot r + i_2 \cdot r= {}^{g_1} r \cdot i_1 + {}^{g_2} r \cdot i_2$$
which need not be a left multiple of $i_1+i_2$.} left ideal in $R$, then it is also a right ideal: for a homogenous element $i \in I_g$ and $r \in R$, then
\beqn i \cdot r = {}^g r  \cdot i \in I. \label{left_right_eqn} \eeqn
Similarly, free braided commutative algebras have polynomial growth:

\begin{prop} \label{sym_prop}

Let $V$ be a finite dimensional Yetter-Drinfeld module for a finite group $G$, and let $d$ be the size of a $G$-invariant generating set $c \subseteq V$.  Then the rank $r(n) = \dim(\Sym_{\br}(V)_n)$ is bounded by ${n+d-1 \choose n}$.  In particular, $r(n)$ grows no faster than a polynomial in $n$ of degree $d-1$.

\end{prop}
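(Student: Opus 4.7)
The plan is to exhibit an explicit spanning set of $\Sym_{\br}(V)_n = (V^{\otimes n})_{B_n}$ of size $\binom{n+d-1}{n}$ by using the Yetter-Drinfeld braiding as an oriented rewriting rule on pure tensors. I would first pick a minimal $G$-invariant generating set $c = \{c_1, \ldots, c_d\}$ of $V$, refined if necessary so that each $c_j$ is homogeneous for the YD-grading $V = \bigoplus_g V_g$; write $c_j \in V_{g_j}$. Fix any total order $c_1 < \cdots < c_d$. Since $c$ spans $V$, the pure tensors $c_{j_1} \otimes \cdots \otimes c_{j_n}$ indexed by $(j_1, \ldots, j_n) \in \{1, \ldots, d\}^n$ span $V^{\otimes n}$, so their images span the quotient $(V^{\otimes n})_{B_n}$.

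The main step is a bubble-sort argument. Because $c$ is $G$-invariant, one has $c_i \cdot g_j = c_{\pi_j(i)}$ for some permutation $\pi_j$ of $\{1, \ldots, d\}$ (up to a scalar that plays no role in the spanning argument). Using the YD braiding $\sigma(v \otimes w) = w \otimes v^g$ for $w \in V_g$ and passing to $B_n$-coinvariants, the relation $[x] = [\sigma_\ell x]$ becomes
\[
[\cdots \otimes c_{j_\ell} \otimes c_{j_{\ell+1}} \otimes \cdots] = [\cdots \otimes c_{j_{\ell+1}} \otimes c_{\pi_{j_{\ell+1}}(j_\ell)} \otimes \cdots],
\]
so an adjacent swap alters only the index immediately to the right of the swapped pair. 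I would then observe that when $j_\ell > j_{\ell+1}$ this swap strictly decreases the sequence $(j_1, \ldots, j_n)$ in the lexicographic order on $\{1, \ldots, d\}^n$, since positions $1, \ldots, \ell-1$ are untouched while position $\ell$ drops from $j_\ell$ to $j_{\ell+1}$. As this is a well-ordering of a finite set, iterating the swap on descents must terminate at a weakly increasing sequence.

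Consequently $(V^{\otimes n})_{B_n}$ is spanned over $k$ by the classes $[c_{j_1} \otimes \cdots \otimes c_{j_n}]$ with $j_1 \leq \cdots \leq j_n$, of which there are exactly $\binom{n+d-1}{n}$; and this binomial coefficient is a polynomial in $n$ of degree $d-1$. The only real subtlety worth flagging is the reduction to a homogeneous generating set: in the rack-type examples driving the paper (where $V = kR$ for a conjugation-invariant $R \subseteq G$) every minimal $G$-invariant generating set is automatically homogeneous and $G$ acts by honest set-theoretic permutations, so the reduction is cost-free; in full generality one must check that passing to the nonzero homogeneous components of a minimal $c$ does not enlarge the count used in the bound, which is the one place where care is needed.
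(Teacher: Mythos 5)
Your proof is correct, and its overall shape --- span $(V^{\otimes n})_{B_n}$ by weakly monotone words in a totally ordered $G$-stable generating set and count $\binom{n+d-1}{n}$ of them --- is the same as the paper's. Where you genuinely differ is in how termination of the sorting is established. You run a bubble sort and observe that a swap applied at a descent strictly decreases the word in the lexicographic order on $\{1,\dots,d\}^n$ (positions $<\ell$ are untouched and position $\ell$ drops from $j_\ell$ to $j_{\ell+1}$), so the process must halt at a weakly increasing word; this is a clean, purely local potential-function argument. The paper instead runs a selection sort over the entire braid orbit $\mathcal{O}$ of the word: it takes the maximal letter $W$ of $c$ occurring anywhere in $\mathcal{O}$, uses braided commutativity to move an occurrence of $W$ to the front (conjugating the letters it passes), and inducts on $n$, noting that the residual $B_{n-1}$-orbit contains no letter exceeding $W$ (it produces weakly decreasing words, but the count is identical). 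Your route is arguably the more elementary one, since it never needs to reason about the whole orbit. As for the homogeneity caveat you flag at the end: the concern is legitimate, but it is equally present in the paper's own proof, which writes $\sigma_i(x_1,\dots,x_n)=(\dots,x_{i+1},x_i^{x_{i+1}},\dots)$ and thereby tacitly assumes each element of $c$ lies in a single $G$-graded summand $V_g$ --- otherwise $B_n$ does not act on the set $c^{\times n}$ at all. The intended reading of ``$G$-invariant generating set'' is one consisting of $G$-homogeneous elements (automatic in the rack-type case $V=kc$ that drives the applications), and with that reading both arguments are complete and your reduction step is unnecessary.
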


We note that if $V$ is a permutation representation of $G$, then $c$ may be taken to be a basis, in which case $d = \rk(V)$.  In general $d \leq \#G \cdot \rk(V)$: if $b$ is an arbitrary basis of $V$, then $G\cdot b$ is a $G$-invariant generating set for $V$.

\begin{proof}

We let $c \subseteq V$ be a generating set which is invariant under the action of $G$.  Then $V^{\otimes n}$ is generated by $c^{\times n}$.  Notice that $B_n$ acts on $c^{\times n}$ via the Yetter-Drinfeld structure:
$$\sigma_i(x_1, \dots, x_n) = (x_1, \dots, x_{i-1}, x_{i+1}, x_i^{x_{i+1}}, x_{i+2}, \dots, x_n)$$
This remains in $c^{\times n}$ since $x_i^{x_{i+1}} \in G\cdot x_i \subseteq c$.  The orbits $c^{\times n} / B_n$ then descend to a generating set of $\Sym_{\br}(V)_n = (V^{\otimes n})_{B_n}$.

Pick a total ordering $<$ on $c$, entirely arbitrarily.  We claim that a generating set for $\Sym_{\br}(V)_n$ is given by monomials of the form $v_1  v_2  \dots  v_n$ where $v_i \in c$ and $v_i \geq v_{i+1}$.  This then gives the claimed bound in rank by comparison to the rank of the space of polynomials in $d$ variables of degree $n$.

Certainly $\Sym_{\br}(V)_n$ is generated by monomials of the indicated form without the ordering requirement.  However, we may inductively replace an unordered monomial with one in which the $v_i$ are ordered using braided commutativity.  Specifically, if $v_1 \cdots v_n$ is a monomial of length $n$ in $c$, let $\OO$ be the orbit of $(v_1, \dots, v_n)$ in $c^{\times n}$ under $B_n$.  This is a finite set; let $S \subseteq c$ be the subset of all of the generators which appear at some point in an element of $\OO$.  Now $S$ has a maximum element, $W$, which appears in a word $(w_1, \dots, w_{i-1}, W, w_{i+1}, \dots w_n) \in \OO$.  Then
$$v_1   \cdots  v_n = w_1  \cdots  w_{i-1}  W  w_{i+1}  \cdots  w_n = W  w_1^W  \cdots  w^W_{i-1}   w_{i+1}  \cdots  w_n.$$
Now induct; by construction, no element in the $B_{n-1}$-orbit of $(w_1^W, \dots, w^W_{i-1}, w_{i+1}, \dots, w_n)$ contains an element larger than $W$.

\end{proof}

We will need a slight variation on $r(n)$: 

\begin{defn}

Let $p(n) = \max_{m \leq n} r(m)$.

\end{defn}

We note that $p(n)$ grows at the same rate as $r(n)$, as a polynomial of bounded degree.  It has the technical advantage of being nondecreasing.

\begin{defn} \cite{kkm} Let $A$ be a graded $k$-algebra which is generated in degree 1.  Then $A$ has (finite) \emph{Gelfand-Kirillov dimension $d$} if the limit
$$\lim_{j \to \infty} \frac{\log(\rk_k(\sum_{n=0}^j A_n))}{\log(j)}$$
exists, and is equal to $d$. 

\end{defn}

If $V$ is a finite dimensional Yetter-Drinfeld module, it follows from Proposition \ref{sym_prop} that $R = \Sym_{br}(V)$ has Gelfand-Kirillov dimension 
$$\dim_{GK}(\Sym_{br}(V)) \leq d$$
where $d$ is the size of a minimal $G$-invariant generating set for $V$.

\subsection{Growth of tensor algebras} \label{growth_section} 

We will later need to control the {\em superdiagonal subalgebra} of a tensor algebra, which we now define.

\begin{defn} Let $k$ be a field, let $M$ be a graded $k$-vector space supported in nonnegative degrees, and $T M$ the tensor algebra of $M$ over $k$.  For each $m \geq 0$, write $(M^{\tensor m})_{\geq m}$ for the submodule of $M^{\tensor m}$ consisting of elements of degree at least $m$.  Then the {\em superdiagonal subalgebra} of $T M$ is $\oplus_m (M^{\tensor m})_{\geq m}$.
\label{def:superdiagonal}
\end{defn}

We note that if $M_*$ were supported in {\em positive} degree, $T M$ would be its own superdiagonal subalgebra, and this would make it easy to write down the upper bounds for the dimensions of graded pieces that we will later need.     In fact, it is not too hard to get such bounds even if $M$ has some degree-$0$ part, as we now show.

\begin{prop} \label{exp_growth_tensor_prop} Let $k$ be a field, let $M_*$ be a graded $k$-vector space supported in nonnegative degrees, and suppose there are constants $D,C \geq 1$ such that $\dim M_n < DC^n$.  Let $A$ be the superdiagonal subalgebra of $T M$.  Then the degree-$n$ part of $A$ has dimension at most $(4DC)^n.$
\end{prop}

\begin{proof}  By the definition of the superdiagonal subalgebra,  the degree-$n$ part of $A$ is the degree-$n$ part of $\oplus_{q = 0}^n M^{\tensor q}$.  Write $r_n$ for $\dim M_n$ and $R(t)$ for the generating function $\sum r_n t^n$.  Then the dimension of the degree-$n$ part of $M^{\tensor q}$  is the coefficient of $t^n$ in $R(t)^q$.  Our bounds on $r_n$ show that the coefficients of $R(t)$ are, term by term, less than or equal to those of $D(1-Ct)^{-1}$; so $\dim M^{\tensor q}_n$ is at most the coefficient of $t^n$ in $D^q (1-Ct)^{-q}$, which is easily seen to be
\beq
D^q C^n {n+q-1 \choose q-1} \leq D^q C^n 2^{n+q-1}.
\eeq
The sum of this quantity as $q$ ranges from $0$ to $n$ is bounded above by
\beq
D^n C^n 2^{2n-1}(1 + (2D)^{-1} + (2D)^{-2} + \ldots) \leq (4DC)^n
\eeq
which is the desired result.
\end{proof}

\subsection{Domination of objects in abelian categories} 

Finally, we introduce a notion that will be technically useful as we go.  If $C$ is an abelian category, we define a partial order on objects of $C$ called {\em domination} as follows. 

\begin{defn} If $M',M$ are objects of $C$, we say $M$ {\em dominates} $M'$ if $M'$ has a filtration whose associated graded is isomorphic to $M$, or if $M'$ is isomorphic to a subquotient of $M$, and we extend by transitivity. 
\label{def:dominate}
\end{defn}

In particular, if there is a spectral sequence any of whose pages is $M$ and which converges to $M'$, then $M$ dominates $M'$.  Our main interest is in the following setting:

\begin{exmp}

When $C$ is the category of graded (or multigraded) vector spaces over a field $k$, and $M$ and $M'$ are objects of $C$, both having finite-dimensional graded pieces, with $M$ dominating $M'$, then the dimension of a graded piece of $M$ is greater than equal to the dimension of the corresponding graded piece of $M'$.  

\end{exmp}

This notion will be useful to us in the argument because we will be chaining many spectral sequences together, and because our goal in the end will be to bound the dimensions of graded pieces of certain Ext modules at the end of these chains of spectral sequences.

\begin{rem} Of course, if $C$ is the category of graded $k$-vector spaces, the associated graded of a filtered object is isomorphic to the object itself, so to say $M$ dominates $M'$ is merely to say $M'$ is a subquotient of $M$.  We keep track of the more general notion because, in the context of the present paper, we will be proving domination relations between modules $M,M'$ for a ring $R$, and only at the end forgetfully thinking of these modules as graded $k$-vector spaces.  It seems possible that the fact that $M$ dominates $M'$ as an $R$-module might be useful for some other purpose, even though here we are only using it to get inequalities on $k$-dimensions of graded pieces, so we have chosen not to throw that information away.
\end{rem}

\subsection{Hypotheses on cohomological growth} \label{hyp_section}

In this section, we present two hypotheses on the growth rate of the cohomology of quantum shuffle and Nichols algebras, as well as a theorem about the relation between these hypotheses.  We also give examples where these hypotheses hold.  The results of section \ref{malle_section} will relate these hypotheses to Malle's conjecture in the function field setting.

Let us write $\Ext = \Ext_{\mA}^{*, *}(k, k)$, and
$$\Ext^j := \bigoplus_{n=j}^\infty \Ext_{\mA}^{n-j, n}(k, k)$$
The Yoneda multiplication on $\Ext$ makes $\Ext^j$ a (left) $R$-module.  If it is finitely generated over $R$, then its growth rate (with $n$) is asymptotic to a constant multiple of $p(n)$.  More generally, we may ask for that its growth rate be bounded by such a function of $n$.  As a function of $j$, we may ask that this bound be at worst exponential in $j$:

\begin{hyp}[Exponential growth] \label{exp_hyp} There exists a constant $C$ such that 
$$\dim_k \Ext_{\mA}^{n-j, n}(k, k) \leq p(n) C^j.$$

\end{hyp}

Using the spectral sequences in Section \ref{ss_sec}, we may get at the cohomology of the quantum shuffle algebra if we understand that of the Nichols algebra.  Restating the previous hypothesis for $\mB$ in place of $\mA$: 

\begin{hyp} \label{e_o_conj} There exists a constant $C_{\mB}$ such that
$$\dim_k \Ext_{\mB}^{n-j, n}(k, k) \leq p(n) C_{\mB}^j.$$
\end{hyp}

We believe that this hypothesis is valid for a very large family of Nichols algebras $\mB$, and will show in Corollary \ref{we_win_cor} that it holds in the cases relevant to Hurwitz spaces.  A conjecture of Etingof-Ostrik \cite{etingof-ostrik} states that the cohomology of a finite dimensional (braided) Hopf algebra is finitely generated.  If true, this would imply Hypothesis~\ref{e_o_conj} (with $C_{\mB} = 1+\epsilon$ for any $\epsilon>0$) in the case that $\mB$ is finite dimensional, since finitely generated braided-commutative algebras have polynomial growth.  

It is known that $\mB$ is finite dimensional for all Nichols algebras of Cartan type whose associated Cartan matrix is of finite type \cite{as_qg_cartan}.  Etingof-Ostrik's conjecture was recently verified for these Nichols algebras in \cite{mpsw}.  Additionally, in the rack setting, $\mB$ has been shown to be finite dimensional in several instances (e.g., the Fomin-Kirillov algebra $\mathcal{E}_n$ for $n<6$).  The cohomology of $\mathcal{E}_3$ has been explicitly computed in \cite{stefan-vay}; it is indeed finitely generated.

\begin{thm} \label{alg_bound_thm} Hypothesis \ref{e_o_conj} implies Hypothesis \ref{exp_hyp} with $C  = 4\max\{C_{\mB}, \dim V\}^2$. 

\end{thm}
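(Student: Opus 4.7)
The plan is to combine the two spectral sequences of Section \ref{ss_sec}---the May spectral sequence $\Ext_{\mA^{\gr}}(k,k) \implies \Ext_\mA(k,k)$ and the change-of-rings spectral sequence of Proposition \ref{ss_prop}---with a direct bar-complex estimate on the growth of $\Ext_\mE$.

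The main new ingredient is the estimate on $\Ext_\mE$. Because $\mE$ is generated in internal degrees $\geq 2$ (as noted after Proposition \ref{free_prop_3}) and $\dim_k \mE_n \leq \dim_k \mA^{\gr}_n \leq (\rk_k V)^n$, the normalized bar complex $B_p(k,\mE,k)_n$ vanishes for $n < 2p$. Writing $j_\mE = n-p$ for the topological degree, this forces $n \leq 2 j_\mE$ whenever $\Ext^{n-j_\mE,\,n}_\mE(k,k)$ is nonzero. Enumerating ordered partitions of $n$ into $p$ parts of size $\geq 2$ and using the crude rank bound on $\mE_n$ yields
$$\dim_k \Ext^{n-j_\mE,\,n}_\mE(k,k) \;\leq\; (\rk_k V)^n \binom{n-1}{j_\mE} \;\leq\; (2\rk_k V)^{2 j_\mE},$$
so $\Ext_\mE$ has at-worst exponential growth in the topological grading, with constant $2\ln(2\rk_k V)$.

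Next, through the change-of-rings spectral sequence
$$\Ext^{p,r}_\mB(k,\Ext^{q,s}_\mE(k,k)) \implies \Ext^{p+q,\,r+s}_{\mA^{\gr}}(k,k),$$
I would bound each $E_2$-term by the standard inequality $\dim_k \Ext^{p,r}_\mB(k,M) \leq \sum_s \dim_k \Ext^{p,r-s}_\mB(k,k) \cdot \dim_k M_s$ (derived from a minimal free $\mB$-resolution of $k$), applied with $M = \Ext_\mE(k,k)$. Convolving Hypothesis \ref{e_o_conj} with the $\mE$-estimate above over bidegree partitions $j = j_\mB + j_\mE$ and $n = n_\mB + n_\mE$ then yields the required exponential growth for $\Ext_{\mA^{\gr}}$ in topological degree $j$. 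Because the word-length filtration on $\mA$ is multiplicative and compatible with the internal grading, the May spectral sequence transfers this bound to $\Ext_\mA$.

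Finally, to convert the resulting $k$-rank bound into the $R$-module generator count demanded by Hypothesis \ref{exp_hyp}, I would use the polynomial growth of $R$ from Proposition \ref{sym_prop}; the polynomial factor absorbs harmlessly into the exponential type. The main obstacle is to track carefully the interaction of topological and internal degrees across both spectral sequences: the coefficient modules $\Ext^{q,s}_\mE$ for the outer $\mB$-cohomology live in internal degrees up to $s \leq 2j_\mE$, and accounting for these internal-degree shifts of the $\mB$-cohomology (which effectively double the contribution of $C_\mB$ when the $\mE$-internal degree saturates) is what produces the factor of $2$ in front of $C_\mB$ in the final constant $2(C_\mB + \ln(2\rk_k V))$.
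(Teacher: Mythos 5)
Your proposal follows the paper's own route almost exactly: the same May and change-of-rings spectral sequences, the same key observation that $\mE$ is generated in internal degrees $\geq 2$ (forcing $d \geq 2b$, hence $b \leq j$ and $d \leq 2j$), the same crude bar-complex bound $(2\rk_k V)^{2j}$ on the $\mE$-factor, and the same mechanism doubling $C_{\mB}$. Your use of a minimal free resolution to bound $\Ext_{\mB}(k,M)$ by $\Ext_{\mB}(k,k)\otimes M$ is just a repackaging of the paper's third spectral sequence (filtering the coefficient module by degree), so there is no substantive difference there.

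The one step that needs repair is your final paragraph. Hypothesis \ref{exp_hyp} asks that $\Ext^j$ be a subquotient of an $R$-module of finite rank $g_j$, and you cannot recover such a statement a posteriori from $k$-rank data: $\Ext^j$ typically has infinite $k$-rank (already $\Ext^0 = R$ does), so there is no global ``$k$-rank bound'' to convert, and a bidegree-wise bound of the form $\dim_k \Ext^{n-j,n} \leq g_j\cdot p(n)$ does not by itself imply the module is generated over $R$ by $\sim g_j$ elements. The fix is available inside your own argument: all three spectral sequences are spectral sequences of $R$-modules, with $R$ acting by permanent cycles on the $\mB$-tensor-factor, so Hypothesis \ref{e_o_conj} hands you an $R$-module generator count for that factor directly, and the $\mE$-factor contributes only the finite-dimensional multiplicity $(2\rk_k V)^{2j+1}$. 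Carry the $R$-structure through the subquotients rather than passing to $k$-dimensions and back.
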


\begin{proof}

Theorem 4 of \cite{may_restricted} gives a (tri-graded) spectral sequence in $\Ext$ for a filtered algebra:
$$\bigoplus_p \Ext_{\grA}^{n, p, q}(k, k) \implies \Ext^{n, q}_{\mA}(k, k)$$
We will suppress the filtration grading $p$, and write $\Ext_{\grA}^{n, q}(k, k)$ for the $E_2$ term of this spectral sequence, summing over $p$.  Proposition \ref{ss_prop} gives a change of rings spectral sequence 
$$\Ext_{\mB}^{n, q}(k, \Ext_{\mE}^{n', q'}(k, k)) \implies \Ext^{n+n', q+q'}_{\grA}(k, k).$$
Since $\mB$ is a connected algebra, we may obtain a spectral sequence for $\Ext_{\mB}(k, M)$ (for a $\mB$-module $M$) by filtering by degree.  In the case that $M = \Ext_{\mE}^{n', q'}(k, k)$, this is of the form
$$\Ext_{\mB}^{n, q}(k, k) \otimes \Ext_{\mE}^{n', q'}(k, k) \implies \Ext_{\mB}^{n, q}(k, \Ext_{\mE}^{n', q'}(k, k)) $$

In summary, these three spectral sequences imply that $\Ext^{*, *}_{\mA}(k, k)$ is dominated by $\Ext_{\mB}^{*, *}(k, k) \otimes \Ext_{\mE}^{*, *}(k, k)$.  In particular $\Ext_{\mA}^{n-j, n}(k, k)$ has dimension at most that of
$$\bigoplus_{a+b=n-j,} \bigoplus_{c+d=n} \Ext_{\mB}^{a, c}(k, k) \otimes \Ext_{\mE}^{b, d}(k, k).$$

Notice that $\grA$ and $\mB$ agree in degree $\leq 1$; thus $\mE$ is generated in degree 2 and higher.  Examining the bar complex for $\mE$, we see that $\Ext_{\mE}^{b, d}(k, k)$ can only be nonzero when $d\geq 2b$.  For $\mB$, we note additionally that only when $c \geq a$ is $\Ext_{\mB}^{a, c}(k, k)$ nonzero.  Therefore the only contribution to the sum above occurs when $b \leq j$:
$$b = n-j-a = (c+d)-j-a = (c-a) +d-j \geq d-j \geq 2b-j,$$
which gives $b \leq j$.  So our sum is in fact
$$\bigoplus_{b=0}^j \bigoplus_{c+d=n} \Ext_{\mB}^{n-j-b, c}(k, k) \otimes \Ext_{\mE}^{b, d}(k, k)$$

Since $c \geq a = n-j-b$, we can bound $d$ as well:
$$d = n-c \leq n-a= n-(n-j-b) = j+b \leq 2j.$$
This shows that $\Ext^{n-j, n}$ has dimension at most that of
$$\bigoplus_{b=0}^j \bigoplus_{d=0}^{2j} \Ext_{\mB}^{n-j-b, n-d}(k, k) \otimes \Ext_{\mE}^{b, d}(k, k)$$
Hypothesis \ref{e_o_conj} implies that the first tensor factor is bounded in rank by 
$$p(n-d) C_{\mB}^{j+b-d} \leq p(n) C_{\mB}^{j+b-d}.$$  

The second tensor factor is the finite dimensional $k$-vector space $\Ext_{\mE}^{b, d}(k, k)$, whose rank is bounded by that of the bar complex for $\mE$ and thus that of $\mA$.  In bidegree $(b, d)$, that rank is 
$$\dim B_b(k, \mA, k)[d] = \#\{\mbox{compositions of $d$ into $b$ parts} \} \cdot (\dim V)^d  = {d-1 \choose b-1} \cdot (\dim V)^d$$
Summing over $b\in [0, j]$ and $d \in [0, 2j]$, this gives
\begin{eqnarray*}
\dim \Ext_{\mA}^{n-j, n}(k,k) & \leq & \sum_{b=0}^j \sum_{d=0}^{2j} p(n) C_{\mB}^{j+b-d} {d-1 \choose b-1} \cdot (\dim V)^d \\
 & \leq & p(n) \sum_{b=0}^j \sum_{d=0}^{2j} {d-1 \choose b-1} (\max\{C_{\mB}, \dim V\})^{j+b}\\
 & \leq & p(n) (\max\{C_{\mB}, \dim V\})^{2j} \sum_{d=0}^{2j} 2^{d-1} \\
 & \leq & p(n) (4 \max\{C_{\mB}, \dim V\}^2)^{j};
\end{eqnarray*}
in the third line we use the fact that $b \leq j$.

\end{proof}

\section{Bounding the cohomology of a graded algebra} \label{nichols_coh_section}

Let $A$ be a connected, graded $k$-algebra.  Assume further that $A$ is finitely generated in degree 1, that is $Q(A) = A_1$ is finite dimensional.  The quadratic cover $\Ahat$ of $A$ is the algebra generated by $A_1$ subject only to the degree 2 relations in $A$.  Let $S = \Ahat^!$ be the quadratic dual algebra to $\Ahat$; see Definition \ref{quad_dual_defn}.  The purpose of this section is to prove the following:

\begin{thm} \label{main_thm}
The cohomology $\Ext_{A}^{*, *}(k,k)$ is dominated (Definition~\ref{def:dominate}) by the free $S$-module on the superdiagonal subalgebra (Definition~\ref{def:superdiagonal}) of $T (H^*(K(A)^*))$.
\end{thm}

Here $K(A)$ is the Koszul complex of $A$; see section \ref{koszul_section} for the definition.  The grading on $K(A)$ implicit in the statement of Theorem~\ref{main_thm} is the {\em topological degree} (Definition~\ref{def:topdeg}), in which the bigraded piece $K(A)_{p,q}$ defined in Definition~\ref{def:ka} has degree $p-1$.

This result gives us an exponential bound in the growth of the (co)homology of $A$ as an $S$-module in terms of the size of the homology of the Koszul complex.  By the definition of the superdiagonal subalgebra, it would be equivalent to say that $\Ext_{A}^{*, *}(k,k)$ is dominated by a direct sum of $S$-modules $M_m$, where $M_m$ is dominated by $S \otimes H^*(K(A)^*)^{\tensor m}$, and is supported in degrees greater than or equal to $m$. 

Theorem \ref{main_thm} is stated quite generally, but our main interest will be in the case that  $A$ is the Nichols algebra $A = \mB(V^*_{\epsilon})$.  In that case, Lemma \ref{components_lemma} and Proposition \ref{sym_br_prop} identify $S$ with the free braided commutative algebra $R = \Sym_{\br}(V)$.  In section \ref{bound_section} we will apply this by giving bounds on $H^*(K(A)^*)$ when $V = kc$ is a braided vector space of rack type; in order to bound the cohomological growth of Hurwitz spaces.

\subsection{Recollections on the cobar complex of a coalgebra}  \label{index_section}

Recall that the graded dual of a locally finite, graded algebra is a graded coalgebra.  In this section, we'll recall some of the basics for computing the $\Ext$ algebra for a graded algebra; much of this is naturally phrased in terms of the dual coalgebra.  

Let $(C, \Delta)$ be a coaugmented, non-negatively graded coalgebra, and write
$$C_{>0} := \cok(\eta: k \to C)$$
for the cokernel of the coaugmentation\footnote{If $C$ is a Hopf algebra, this is just the unit.}.  We will write $\Omega(C)$ for the \emph{cobar complex} of $C$.  This is a differential bigraded algebra; its underlying bigraded algebra is the tensor algebra
$$\Omega(C) = T(\Sigma^{(1, 0)} C_{>0} )$$
Here $\Sigma^{(1, 0)} C_{>0}$ is the bigraded vector space with
$$(\Sigma^{(1, 0)} C_{>0})_{p, q} = \left\{ \begin{array}{ll} 
C_q, & p=1, \, q>0 \\
0 & \mbox{else}. 
\end{array}\right.$$
This extends to a bigrading of the tensor algebra by summing the bigrading on each tensor factor.  Explicitly, in this bigrading, an element $c_1 \tensor \ldots \tensor c_m$ with $c_i$ lying in $C_{q_i}$ has bidegree $(m, \sum_{i=1}^m q_i)$; we refer to the first coordinate as the {\em tensorial degree} and then second coordinate as the {\em internal degree}.  It will in fact turn out that more important than either of these is the {\em difference} between the internal degree and the tensorial degree.

\begin{defn} The {\em topological degree} is the grading on $\Omega(C)$ which assigns $C_q$ the grade $q-1$; equivalently, the topological degree is the difference between the total internal degree and the tensorial degree.
\label{def:topdeg}
\end{defn}

We equip $\Omega(C)$ with a differential $d^{\Omega}$.  For $c \in C_{q} = (\Sigma^{(1, 0)} C_{>0})_{1, q}$, we may write 
\beqn \label{signs_d_eqn} \Delta(c) = \sum_{n=0}^{q} c_n' \otimes c_n'', \mbox{ where } c_n' \in C_n.  \;\;\; \mbox{ Then }\; d^{\Omega}(c) =  \sum_{n=1}^{q-1} (-1)^{n} c_n' \otimes c_n''.\eeqn
That is, $d^{\Omega}$ is a signed version of reduced coproduct $\overline{\Delta}$, defined on the quotient $C_{>0}$.  Then $d^{\Omega}$ is uniquely extended to $\Omega(C)$ by requiring it to be a graded derivation (with respect to total grading given by the difference of the indices): 
$$d^{\Omega}(x \otimes y) = d^{\Omega}(x) \otimes y + (-1)^{q-p} x \otimes d^{\Omega}(y), \; \mbox{ if } x \in \Omega(C)_{p, q} = (C_{>0}^{\otimes p})_q.$$
The vanishing of $d^{\Omega} \circ d^{\Omega}$ is equivalent to the coassociativity of $\Delta$.

If $A$ is a locally finite, graded algebra and we take $C=A^*$ to be its graded dual, this computes the cohomology of $A$:
$$H^{p, q}(\Omega(A^*)) \cong \Ext_{A}^{p,q}(k, k)$$
since $\Omega(A^*)_{p,q}$ is the internal degree $q$ part of $(A_{>0}^*)^{\otimes p} \cong \Hom(A_{>0}^{\otimes p}, k) = \Hom(B(A)_p, k)$.

\subsection{The Koszul complex} \label{koszul_section} 

We recall the assumptions of Theorem \ref{main_thm}: $A$ is a connected graded algebra, finitely generated in degree 1 by $W := A_1$.  Then there is a surjective map of graded algebras $f:TW \twoheadrightarrow A$.  Write $J = \ker(f)$; this is a homogenous ideal.  Let $J_2 = J \cap (W \otimes W)$ be the degree 2 component.

\begin{defn} \label{quad_dual_defn} The \emph{quadratic cover} of $A$ is
$$\Ahat := T(W) / (J_2)$$
where $(J_2)$ is the two sided ideal generated by $J_2$.  The \emph{quadratic dual} of $\Ahat$ is
$$S := T(W^*) / (J_2^{\perp})$$
where $J_2^\perp =\ker (W^* \otimes W^* = (W \otimes W)^* \to J_2^*)$.  \end{defn}

The ring $S$ is commonly notated as $\Ahat^!$; we use $S$ for notational simplicity.  It is well-known that $S_p \cong \Ext_{\Ahat}^{p,p}(k,k)$.  Indeed, $\Ext_{\Ahat}^{p,p}(k,k)$ is the cokernel of the differential $d^{\Omega}: \Omega^{p-1, p}(A^*) \to \Omega^{p, p}(A^*)$; as in (\ref{cobar_diff_eqn}), one can easily verify that the image of this map is the two-sided ideal generated by $J_2^\perp$.

In the special case that there exist surjections
\beqn \label{double_surj_eqn} \widehat{B(V^*_{\epsilon})} \twoheadrightarrow A \twoheadrightarrow B(V^*_{\epsilon}), \eeqn
(so $W = V^*_{\epsilon}$), Lemma \ref{components_lemma} allows us to identify this quadratic dual: 
$$S_p = \Ext_{\Ahat}^{p,p}(k,k) = \Ext_{A}^{p,p}(k,k) = R_p = \Sym_{\br}(V)_p.$$

Let $\{v_1, \dots, v_n\}$ be a basis for $W$.  We recall that the \emph{Koszul complex} for $A$ is the bigraded vector space $K(A)$ with $K(A)_{p, q} = A_p \otimes S_q^*$.  This becomes a chain complex using the differential
\beqn d(x \otimes \phi)= \sum_{i=1}^n xv_i \otimes \phi v_i^*. \label{diff_eqn} \eeqn
Here $\{v_1^*, \dots, v_n^*\}$ is the dual basis of $W^* =S_1$, and $S^* = \Hom_k(S, k)$ is a right $S$-module via $(\phi a)(b) = \phi(ab)$.  

Although true, it is not obvious that this definition is independent of choice of basis, nor that $d^2 = 0$.  Further, the Koszul complex is normally defined only for quadratic algebras such as $\Ahat$; nonetheless, we may describe $K(A)$ as presented above as 
$$K(A) = A \otimes_{\Ahat} K(\Ahat).$$
Much of our attention will be focused on the dual of this complex:

\begin{defn} The \emph{dual Koszul complex} of $A$ is $K(A)^*_{p, q} := A^*_p \otimes S_q$, with differential
$$d(\phi \otimes r) = \sum_{i=1}^n \partial^L_{v_i}(\phi) \otimes v_i^* r.$$
The first term is the action of $A$ on $A^*$ through left derivatives described in Definition \ref{deriv_defn}; the second term is left multiplication in $S$.

Further, define $K_2(A)^*$ to be the \emph{two-sided dual Koszul complex}:
$$K_2(A)^*_{m, p, q} := S_m \otimes A_p^* \otimes S_q.$$ 
This is equipped with the differential $d = d^R + (-1)^p d^L$, where 
$$d^R(r \otimes \phi \otimes s) = \sum_{i=1}^n r v_i^* \otimes \partial_{v_i}^R(\phi) \otimes s, \; \; \mbox{ and } \; \;d^L(r \otimes \phi \otimes s) = \sum_{i=1}^n r \otimes \partial_{v_i}^L(\phi) \otimes v_i^* s.$$

\label{def:ka}
\end{defn}

Since $d^L$ is the usual differential on $S \otimes K(A)^*$, it's easy to see that $(d^L)^2=0$; similarly $(d^R)^2 = 0$.  Further, $d^L$ and $d^R$ commute, since the action of the left and right partial derivatives commute.  Thus $d^2=0$, being a bicomplex differential.

\subsection{Filtering the cobar complex}

Write $\Omega$ for the cobar complex $\Omega A^* = (T\Sigma^{1, 0} A^*_{>0}, d^{\Omega})$.  There is surjective map of differential graded algebras $\pi: \Omega \to S$ induced by the projection $A^*_{>0} \to A_1^* = W^*$ and imposition of the relations in $S = T(W^*)/(J_2^{\perp})$.  Because 
$$(J_2^\perp) = \im (d^{\Omega}) \cap T(W^*),$$
it is clear that this map is indeed a chain map, where $S$ is equipped with the zero differential.

Define $I := \ker(\pi)$; this is a dg ideal in $\Omega$.  Equip $\Omega$ with the ($I$-adic) filtration by powers of $I$; the associated graded algebra is
$$\Omega^{gr} = \bigoplus_{n=0}^\infty I^n/I^{n+1}.$$
This gives rise to a spectral sequence
\beqn \label{omega_ss_eqn} \bigoplus_{n=0}^{\infty} H^*(I^n/I^{n+1}) = H^* \Omega^{gr} \implies H^* \Omega = \Ext_A(k,k) \eeqn
which we will use to prove Theorem \ref{main_thm}.

We note that the $\Omega$-module structure on $I^n$ descends to an $S = \Omega/I$-module structure on $I^n/I^{n+1}$.  The following is our main tool in understanding $\Omega^{gr}$:

\begin{prop} \label{tensor_prop}

The $n$-fold multiplication map
$$(I/I^2)^{\otimes_S n} \to I^n/I^{n+1}$$
is an isomorphism of chain complexes.

\end{prop}

We need a lemma:

\begin{lem}

The multiplication map
$$\mu: I^n \otimes_{\Omega} I \to I^{n+1}$$
is an isomorphism of chain complexes.

\end{lem}

\begin{proof}

The map is definitionally surjective, and is a map of chain complexes precisely because the differential in $\Omega$ satisfies the Leibniz rule.  So it suffices to show that the map is injective.  To see this, we may ignore the differentials on $I$ and $\Omega$, and just regard them as modules over the underlying ring of $\Omega$, $T(A^*_{>0})$.

There is a short exact sequence of $\Omega$-modules
$$0 \to I \to \Omega \to S \to 0.$$
This induces a long exact sequence in $\Tor_*^{\Omega}(I^n, -)$:
$$0 \to \Tor_1^{\Omega}(I^n, S) \to I^n \otimes_{\Omega} I \to I^n  \to I^n/I^{n+1} \to 0.$$
Here we've identified $I^n \otimes_{\Omega} \Omega$ with $I^n$ and $I^n \otimes_{\Omega} S$ with $I^n/I^{n+1}$.  So it's enough to show $\Tor_1^{\Omega}(I^n, S) = 0$; then $\mu$ will carry $I^n \otimes_{\Omega} I$ injectively into $I^n$, with image $I^{n+1}$.

We recall from \cite{cohn} that $\Omega = TA^*_{>0}$ is a \emph{free ideal ring}: all of its right ideals are free as right $\Omega$-modules.  In particular, this is true for $I^n$; thus $\Tor_1^{\Omega}(I^n, S) = 0$.

\end{proof}

\begin{proof}[Proof of Proposition \ref{tensor_prop}]

As in the previous Lemma, the map is one of chain complexes, and definitionally surjective; we only need to prove that it is injective.  We may do so by proving injectivity on the underlying modules.

Induct on $n$; the claim is tautological in the base case $n=1$.  Assume that it holds for $n-1$.  Consider the following commuting diagram
$$\xymatrix{
 & I^n \otimes_{\Omega} I \ar[r]^-{i \otimes 1} \ar[d]_-{\mu} & I^{n-1} \otimes_{\Omega} I \ar[r] \ar[d]_-{\mu} & (I^{n-1}/I^n) \otimes_{\Omega} I \ar[r] \ar[d]_-{\mu} & 0 \\
0 \ar[r] & I^{n+1} \ar[r]^-{i} & I^{n} \ar[r] & I^{n}/I^{n+1} \ar[r] & 0 \\
}$$
Here $i$ denotes inclusion of higher powers of $I$ into lower powers.  The bottom row is tautologically exact.  The upper row is exact since $- \otimes_{\Omega} I$ is right exact, but it's not clear that $i \otimes 1$ in the upper left is injective.  However, the previous Lemma implies that the two left vertical maps $\mu$ are isomorphisms.  Thus since the lower $i$ is injective, so is $i \otimes 1$, and the upper sequence is in fact short exact.  We conclude that the rightmost vertical map $\mu$ is also an isomorphism:
$$\mu: (I^{n-1}/I^n) \otimes_{\Omega} I \cong I^n/I^{n+1}$$

Notice that the image of $(I^{n-1}/I^n) \otimes_{\Omega} I^2$ in $(I^{n-1}/I^n) \otimes_{\Omega} I$ is zero; these are sums of classes of the form $x \otimes yz$ where $y$ and $z$ are in $I$ and $x$ is represented by a class in $I^{n-1}$.  But
$$x \otimes yz = xy \otimes z = 0$$
since $xy$ represents a class in $I^n$.  Therefore
$$(I^{n-1}/I^n) \otimes_{\Omega} I = (I^{n-1}/I^n) \otimes_{\Omega} (I/I^2)$$
But now the $\Omega$-module structure on both $I^{n-1}/I^n$ and $I/I^2$ factors through $S = \Omega/I$, so this tensor product is equal to $(I^{n-1}/I^n) \otimes_{S} (I/I^2)$.  By induction, this is $(I/I^2)^{\otimes_S n}$.

\end{proof}

\begin{cor} For every $n$, $I^n/I^{n+1}$ is free as a right $S$-module. \end{cor}

\begin{proof} 

By the previous result, it suffices to show this for $n=1$.  Since $I$ is free as a right $\Omega$-module, say on a set $E$, then 
$$I/I^2 = I \otimes_{\Omega} \Omega/I = E \cdot \Omega \otimes_{\Omega} S = E \cdot S$$
is free as an $S$-module on the same set.

\end{proof}

\subsection{Relating $K_2(A)^*$ and $I/I^2$}

In this section, we study the chain complex $I/I^2$, and show that it is quasi-isomorphic to a split subcomplex of $K_2(A)^*$.

We first recall that for a chain complex $C$ of $S$-modules and integer $\ell$, the \emph{truncation} $\tau_{>\ell} C$ is the subcomplex
$$(\tau_{>\ell} C)_p = \left\{\begin{array}{ll} C_p, & p>\ell+1, \\ \ker(d_{\ell+1}: C_{\ell+1} \to C_\ell), & p=\ell+1 \\ 0 & p\leq \ell. \end{array} \right.$$ 
This is functorial in $C$ and has the property that the inclusion $\tau_{>\ell} C \hookrightarrow C$ is an isomorphism in $H_p$ in degrees $p>\ell$, and $H_p(\tau_{>\ell} C) = 0$ if $p\leq \ell$.

\begin{prop} \label{trunc_split_prop}

If $\im(d_{\ell+1}) \leq C_\ell$ is a projective $S$-module, then there is a subcomplex $(\tau_{>\ell}C)^{\perp} \leq C$ such that the sum of inclusions
$$\tau_{>\ell} C \oplus (\tau_{>\ell}C)^{\perp} \to C$$
is an isomorphism of complexes.

\end{prop}

\begin{proof}

Take $(\tau_{>\ell}C)^{\perp}_p$ to be $C_p$ for $p\leq \ell$ and $0$ for $p>\ell+1$.  Let $(\tau_{>\ell}C)^{\perp}_{\ell+1} \leq C_{\ell+1}$ be the image of an $S$-module map $\im(d_{\ell+1}) \to C_{\ell+1}$ splitting $d_{\ell+1}$.  It is easy to see that this construction has the desired properties.

\end{proof}

Let us write $I/I^2$ as a singly-graded chain complex where the degree is topological degree.  The differential on $I/I^2$ (induced from $\Omega$) fixes the internal degree, and increases the tensorial degree by one, so we may write the differential homologically: $d_j: (I/I^2)_j \to (I/I^2)_{j-1}$.

\begin{lem} \label{surj_lem}

The differential $d_1: (I/I^2)_1 \to (I/I^2)_{0}$ is surjective.

\end{lem}

\begin{proof}

If $I_0$ denotes the degree zero (in the above sense) component of $I$, then $(I/I^2)_0 = I_0/I_0^2$.  The degree zero part -- where the internal degree equals the number of tensor factors -- of $\Omega$ is $T(W^*)$. Then $I_0$ is the two-sided ideal in $T(W^*)$ generated by $J_2^{\perp}$.  However, $J_2^{\perp} = d^{\Omega}(A_2^*) = d^{\Omega}(I^{1, 2})$.  Thus, $d_1: I_1 \to I_0$ is surjective, so the same holds for $d_1: (I/I^2)_1 \to (I/I^2)_{0}$.

\end{proof}

\begin{cor} \label{cor_6}

For every $n>0$, the map 
$$(\tau_{>0}(I/I^2))^{\otimes_S n} \to (I/I^2)^{\otimes_S n} \cong I^n/I^{n+1}$$
is a split quasi-isomorphism.

\end{cor}

\begin{proof}

We first establish the case $n=1$ which concerns the inclusion $\tau_{>0}(I/I^2) \hookrightarrow I/I^2$.  Lemma \ref{surj_lem} implies that $H_0(I/I^2) = 0$, so this is an isomorphism in homology in all degrees.

We note that the $S$-module structure on $I/I^2$ is graded, restricting to $S$-module structures on $(I/I^2)_d$ for every degree $d$.  Since $I/I^2$ is a free $S$-module, the same is true for $(I/I^2)_d$.  By Lemma \ref{surj_lem}, $\im(d_1) = (I/I^2)_{0}$ is free, so Proposition \ref{trunc_split_prop} gives the desired splitting  
$$\tau_{>0}(I/I^2) \oplus \tau_{>0}(I/I^2)^{\perp} \cong I/I^2.$$
Further, the vanishing of $H_0$ implies that $ \tau_{>0}(I/I^2)^{\perp}$ is contractible.

Tensoring this isomorphism with itself $n$ times over $S$ shows that $(I/I^2)^{\otimes_S n}$ is isomorphic to the sum of $(\tau_{>0}(I/I^2))^{\otimes_S n}$ with $2^n-1$ other terms.  These other terms all include a tensor factor of $\tau_{>0}(I/I^2)^{\perp}$ and are therefore contractible.

\end{proof}

We now address the relationship between $I/I^2$ and $K_2(A)^*$.  Note, first of all, that $K_2(A)^*$ carries a natural notion of internal degree and tensorial degree; namely, $K_2(A)^*_{m,p,q} = S_m \otimes A_p^* \otimes S_q$ has tensorial degree $m+1+q$ and internal degree $m+p+q$, whence its topological degree $p-1$.  It is topological degree we have in mind when referring to the ``degree" of an element of $K_2(A)^*$.

\begin{lem}

There is a chain map $j:I/I^2 \to K_2(A)^*$ which is an isomorphism in positive degrees and injective in degree 0.

\end{lem}

\begin{proof}

We may write $I = I_0 \oplus I_{>0}$, where $I_{>0}$ is the subspace of degree greater than 0.  Then
\begin{eqnarray*}
I/I^2 & = & (I_0 \oplus I_{>0})/(I_0^2 \oplus (I_0 I_{>0} + I_{>0}  I_0 +  I_{>0}  I_{>0})) \\
 & = & (I_0/I_0^2) \oplus I_{>0}/(I_0 I_{>0} + I_{>0}  I_0 +  I_{>0}  I_{>0}).
\end{eqnarray*}
Further, we may compute the second term in two steps:
$$I_{>0}/(I_0 I_{>0} + I_{>0}  I_0 +  I_{>0}  I_{>0}) = (I_{>0}/I_{>0}^2)/(I_0 I_{>0} + I_{>0}  I_0).$$

We may identify $(I_{>0}/I_{>0}^2)$ with the subspace of $\Omega = T(A^*_{>0})$ consisting of (sums of) tensors with precisely one term in $A^*$ of degree greater than 1:
$$(I_{>0}/I_{>0}^2) = TW^* \otimes A^*_{>1} \otimes TW^* \leq T(A^*_{>0})$$ 
Now quotienting by $I_0 I_{>0} + I_{>0}  I_0$ imposes the defining relations in $S = TW^*/I_0$, yielding an isomorphism of $S$-bimodues
$$(I_{>0}/I_{>0}^2)/(I_0 I_{>0} + I_{>0}  I_0) \cong S \otimes A^*_{>1} \otimes S.$$
Keeping track of degrees, we see that 
$$(I/I^2)_0 = I_0/I_0^2 \;\; \mbox{ and } \;\; (I/I^2)_m = S \otimes A^*_{m+1} \otimes S\, \mbox{ if $m>0$.}$$

The differential on $I/I^2$ is induced from that of $\Omega$.  We note that $d=0$ on terms of degree zero, and that it is a derivation.  Thus for $m>1$,
$$d_m: (I/I^2)_m = S \otimes A^*_{m+1} \otimes S \to S \otimes A^*_{m} \otimes S = (I/I^2)_{m-1}$$
is determined by its value on $A^*_{m+1}$, and extended $S$-bilinearly.  On the representing term in $\Omega^{1, m+1} = A^*_{m+1}$,
$$d: \Omega^{1, m+1} = A^*_{m+1} \to \bigoplus_{i+j = m+1} A^*_i \otimes A^*_j = \Omega^{2, m+1}$$
is computed by a signed version of the reduced diagonal $\overline{\Delta}$ on $A^*_{>0}$, as given in (\ref{signs_d_eqn}).  In the quotient by $I^2$, all of the terms $A^*_i \otimes A^*_j$ with either $i>1$ or $j>1$ vanish.  Thus in $I/I^2$, $d_m: A^*_{m+1} \to S \otimes A^*_{m} \otimes S$ is given by 
$$d_m(\phi) = - \sum_{i=1}^n v_i^* \otimes \partial^R_{v_i}(\phi) \otimes 1 + (-1)^m \sum_{i=1}^n 1 \otimes \partial^L_{v_i}(\phi) \otimes v_i^*$$
by virtue of Lemma \ref{diagonal_lem}, truncated to bidegrees $(1, m)$ and $(m, 1)$ in $A^* \otimes A^*$. 

This is precisely the negative of the differential in $K_2^*(A)$, so we conclude that in degrees $n>0$ there is an isomorphism of chain complexes $(I/I^2)_{>0} \cong K_2^*(A)_{>0}$.  We will extend this to a chain map of the full chain complexes $j: I/I^2 \to K_2(A)^*$:
\beqn \label{chain_eqn} \xymatrix{\cdots \ar[r]^-{d_4} & (I/I^2)_3 \ar[d]^-{j_3}_-{\cong} \ar[r]^-{d_3} & (I/I^2)_2 \ar[d]^-{j_2}_-{\cong} \ar[r]^-{d_2} & (I/I^2)_1 \ar[d]^-{j_1}_-{\cong} \ar[r]^-{d_1} & (I/I^2)_0 \ar[d]^-{j_0} \ar[r]^-{d_0} & 0 \ar[d]^-{j_{-1}}_-{0} \\
\cdots \ar[r]^-{d_4} & K_2(A)^*_3 \ar[r]^-{d_3} & K_2(A)^*_2 \ar[r]^-{d_2} & K_2(A)^*_1 \ar[r]^-{d_1} & K_2(A)^*_0 \ar[r]^-{d_0} & K_2(A)^*_{-1}}\eeqn
Here the vertical maps $j_m$ with $m>0$ are the (chain) isomorphisms that we have constructed above; it remains to define the map $j_0$ in such a way to make the two right squares commute.

We note that $K_2(A)^*_0 = S \otimes W^* \otimes S$.  To define $j_0$, we first define auxiliary maps $s_t: (W^*)^{\otimes m} \to (S \otimes W^* \otimes S)_m$ for $1 \leq t \leq m$ by
$$s_t(\phi_1 \otimes \cdots \otimes \phi_m) = [\phi_1 \cdots \phi_{t-1}] \otimes \phi_t \otimes [\phi_{t+1} \cdots \phi_m]$$
(we interpret the empty product to be $1 \in S$).  Define $j_0: TW^* \to S \otimes W^* \otimes S$ in degree $m$ to be $j_0 = \sum_{t=1}^{m} s_t$.

We claim that the restriction of each $s_t$ to $I_0^2$ is zero: $(I_0^2)_m$ is additively generated by elements of the form 
$$\phi_1 \otimes \cdots \otimes \phi_{i-1} \otimes \Phi_{i,i+1} \otimes \phi_{i+2} \otimes \cdots \otimes \phi_{h-1} \otimes \Phi_{h, h+1} \otimes \phi_{h+2} \otimes \cdots \otimes \phi_{m}$$
where $\Phi_{i, i+1}$ and $\Phi_{h, h+1}$ lie in $J_2^{\perp} \leq W^* \otimes W^*$.  If $i+1 \leq t-1$ or $i \geq t+1$, then $s_t$ kills this element, since $\Phi_{i, i+1}$ is sent to $0 \in S$.  In the intermediate cases $i = t-1, t$ the fact that $h \geq i+2 \geq t+1$ implies that $s_t$ kills the element, since $\Phi_{h, h+1}$ is sent to $0 \in S$.

Restricting to $I_0 \leq TW^*$, then $j_0$ descends to a well-defined map $j_0: I_0/I_0^2 \to S \otimes W^* \otimes S$.  Classes in $(I_0/I_0^2)_m$ are represented by sums of elements of $TW^*$ of the form
$$x:= \phi_1 \otimes \cdots \otimes \phi_{i-1} \otimes \Phi_{i, i+1} \otimes \phi_{i+2} \otimes \cdots \otimes \phi_m$$
where $\Phi_{i, i+1} = \sum_j \phi_i^j \otimes \phi_{i+1}^j \in J_2^{\perp}$.  Then
$$j_0(x) = \sum_j \left([\phi_1 \cdots \phi_{i-1} \phi_i^j] \otimes \phi_{i+1}^j \otimes [\phi_{i+2} \cdots \phi_m] + [\phi_1 \cdots \phi_{i-1}] \otimes \phi_{i}^j \otimes [\phi_{i+1}^j \phi_{i+2} \cdots \phi_m]\right).$$
From this it requires two very short computations to see that this definition of $j_0$ does indeed yield a chain map as in (\ref{chain_eqn}); as previously, this relies on $\Phi_{i, i+1}$ lying in $J_2^{\perp}$.

Finally, we show that $j_0 = \sum s_t$ is injective on $I_0/I_0^2$; equivalently, the kernel of $j_0$ on $TW^*$ is $I_0^2$.  Note that each term $s_t: (W^*)^{\otimes m} \to S_{t-1} \otimes W^* \otimes S_{m-t}$ takes value in a different summand of $(S \otimes W^* \otimes S)_m$, so 
$$\ker(j_0) = \bigcap_{t=1}^m \ker(s_t)$$
Further, $\ker(s_t) = (I_0)_{t-1} \otimes (W^*)^{\otimes m-t+1} +(W^*)^{\otimes t} \otimes (I_0)_{m-t}$.  The first term vanishes if $t=1,2$, and the second if $t=m-1, m$.

Let $x \in \ker(j_0)$.  Then by the previous, $x \in \ker(s_2) = (W^*)^{\otimes 2} \otimes (I_0)_{n-2}$.  Since $x$ is also in $\ker(s_3)$, we may write $x = x'_3 + x''_3$, where $x'_3 \in (W^*)^3 \otimes (I_0)_{n-3}$, and
$$x_3'' \in (W^*)^{\otimes 2} \otimes (I_0)_{n-2} \cap (I_0)_2 \otimes (W^*)^{\otimes n-2}  \subseteq I_0^2.$$
Repeating this argument with $x_3'$ in place of $x$, one may show that $x_3' = x_4'+x_4'',$ where $x_4'' \in I_0^2$ and $x_4' \in (W^*)^4 \otimes (I_0)_{n-4}$.  Inducting in this fashion (and using the fact that $(W^*)^{m-1} \otimes (I_0)_{1} = 0$) we see that $x \in I_0^2$.

\end{proof}

\begin{thm} \label{prop9}

The map
$$\tau_{>0} j: \tau_{>0}(I/I^2) \to \tau_{>0} K_2(A)^*$$
is an isomorphism of chain complexes, and induces a splitting
$$\tau_{>0}(I/I^2) \oplus (\tau_{>0} K_2(A)^*)^{\perp} \cong K_2(A)^*.$$

\end{thm}

\begin{proof}

The previous result clearly shows that $\tau_{>0} j$ is an isomorphism in degrees greater than 1.  It is also an isomorphism in degree 1, since injectivity of $j_0$ ensures that $j_1$ restricts to an isomorphism between $\ker(d_1)$ in the two complexes.  The map is vacuously an isomorphism in non-positive degrees.

Finally, since $j_0$ is injective and $j_1$ is an isomorphism, $j_0$ restricts to an isomorphism between $\im(d_1)$ in the two complexes.  Since $\im(d_1)$ is projective in $I/I^2$, the same holds in $K_2(A)^*$.  So by Proposition \ref{trunc_split_prop}, $K_2(A)^*$ splits as $\tau_{>0}K_2(A)^* \oplus (\tau_{>0} K_2(A)^*)^{\perp}$.

\end{proof} 

\begin{cor} There is a quasi-isomorphism of differential graded $S$-algebras 
$$T_S(\tau_{>0} K_2(A)^*) \to \Omega^{\gr}.$$
\end{cor}

This follows from the previous result and Corollary \ref{cor_6}.

\subsection{The proof of Theorem \ref{main_thm}}

\label{proofmainthm} 

With the previous work in hand, proving Theorem \ref{main_thm} is straightforward.  The spectral sequence (\ref{omega_ss_eqn}) shows that $\Ext_A(k, k)$ is dominated by
$$\bigoplus_{n \geq 0} H_*(I^n/I^{n+1})$$
so it suffices to show that $H_*(I^n/I^{n+1})$ is in turn dominated by the free left $S$-module on $(H^*(K(A)^*))^{\otimes n}$.  Since $ H_*(I^n/I^{n+1})$ vanishes in degrees below $n$, it is therefore dominated by the free left $S$-module on the superdiagonal subalgebra of $T(H^*(K(A)^*))$ in tensorial degree $n$; this will give Theorem \ref{main_thm}.

We now prove that the term $H_*(I^n/I^{n+1})$ is dominated by the free left $S$-module on $H^*(K(A)^*)^{\otimes n}$.   First, Corollary \ref{cor_6} gives 
$$H_*(I^n/I^{n+1}) \cong H_*(\tau_{>0}(I/I^2)^{\otimes_S n}).$$
Then Theorem \ref{prop9} implies that $\tau_{>0}(I/I^2)^{\otimes_S n}$ is a split subcomplex of $(K_2(A)^*)^{\otimes_S n}$, so $H^*(I^n/I^{n+1})$ is a summand of $H^*((K_2(A)^*)^{\otimes_S n})$. Furthermore, $\tau_{>0}(I/I^2)$ is zero in degrees less than $1$, so $\tau_{>0}(I/I^2)^{\otimes_S n}$ (and hence its homology) vanishes in degrees less than $n$.  

We may filter $K_2(A)^*$ with respect to the index $m$ on the first tensor factor of $S$.  In the associated graded chain complex $K_2(A)^{* \gr}$, the differential $d^R$ is suppressed since it increases $m$.  So 
$$K_2(A)^{* \gr} = (K_2(A)^*, d^L).$$
However, since the first factor of $S$ does not participate in the differential, we see that there is an isomorphism of chain complexes $K_2(A)^{* \gr} \cong S \otimes K(A)^*$.  Here the differential on the target is the $S$-linear extension of the differential on $K(A)^*$.

Consider the the $n$-fold tensor power 
\beqn \label{big_tensor_eqn} (K_2(A)^*)^{\otimes_S n} = \bigoplus_{m_i, p_j} S_{m_0} \otimes (A^*_{p_1} \otimes S_{m_1}) \otimes \cdots \otimes (A^*_{p_n} \otimes S_{m_n})\eeqn
This is equipped with the tensor power differential $d = \sum_{i=1}^n \pm d_i^R \pm d_i^L$; here $d_i^R$ increases $m_{i-1}$ by 1 and decreases $p_i$ by 1, whereas $d_i^L$ decreases $p_i$ by 1 and increases $m_{i}$ by 1.

Equip $(K_2(A)^*)^{\otimes_S n}$ with a decreasing filtration $\{F_k\}$, where for an integer $k$, $F_k$ is the sum of the homogenous components of (\ref{big_tensor_eqn}) with
$$nm_0 + (n-1)(p_1+m_1) + (n-2)(p_2+m_2) + \cdots + (p_{n-1} + m_{n-1}) \geq k$$
A short computation shows that each $d_i^L$ exactly preserves the filtration, whereas $d_i^R$ increases it by 1.  Consequently $F_k$ is indeed a filtration by subcomplexes.  Further, in the associated graded, each $d_i^R$ is suppressed, so it is easy to see that there is an isomorphism of chain complexes 
$$((K_2(A)^*)^{\otimes_S n})^{\gr} \cong S \otimes (K(A)^*)^{\otimes n}.$$
The resulting spectral sequence shows that $H_*(I^n/I^{n+1})$ is dominated by the homology of the right-hand side, which is the free left $S$-module on $H^*(K(A)^*)^{\otimes n}$, as desired.

\section{Bounding the cohomology of Nichols algebras} \label{bound_section}

The purpose of this section is to establish tools to bound the growth of $\Ext_{\mB}(k, k)$ as an $R$-module, specifically to prove Hypothesis \ref{e_o_conj} for certain classes of Nichols algebras of rack type.  Theorem \ref{main_thm} is our main tool; to apply it, we need to bound the growth rate of the homology of the Koszul complex $K(\mB)^*$.

In section \ref{koszul_nichols_section} we study the Koszul complex for Nichols algebras in general, and in sections \ref{koszul_rack_section} and \ref{mult_koszul_section} we narrow our focus to Nichols algebras coming from racks, the case relevant to the homology of Hurwitz spaces.  Using a variant on the Conway-Parker/Fried-V\"{o}lklein theorem \cite{fried-volklein}, we show that the Koszul complex for these Nichols algebras do grow at worst exponentially, thereby proving Hypothesis \ref{e_o_conj} for these Nichols algebras.

Because we will never consider the two-sided Koszul complex in this section, we will only ever use the left braided derivatives $\partial^L_v$ which occur in $K(A)^*$ (and not $\partial^R_v$).  This allows us to use the shorthand $\partial_v := \partial^L_v$ for the entirety of this section.

\subsection{The dual Koszul complex for Nichols algebras} \label{koszul_nichols_section}

In this section, we study the Koszul complex and its dual for the Nichols algebra $\mB(V^*_\epsilon)$.  Choose a basis $\{v_1, \dots, v_n\}$ of $V_{\epsilon}$ and let $\{v_1^*, \dots, v_n^*\}$ be the dual basis. Then we may write 
$$K(\mB(V^*_\epsilon))^* = \mB(V^*_\epsilon)^* \otimes R \cong  \mB(V_\epsilon) \otimes R, \;\mbox{ with } \; d(\phi \otimes r) = \sum \partial_{v_i^*}(\phi) \otimes v_i r$$
Notice that this is a right\footnote{Here we recall that the ring $S$ of the previous section is $R = \Sym_{\br}(V)$ in the case that $A=\mB(V^*_{\epsilon})$.} $R$-module, and this action commutes with the differential.  This allows us to define a Koszul complex with coefficients in more general $R$-modules, as follows:

\begin{defn} \label{k_comp_defn}

Let $M$ be a (graded) left $R$-module, and define a bigraded chain complex 
$$\KK(M)^{p, q} := (K(\mB(V^*_\epsilon))^* \otimes_R M)^{p, q} = \mB(V_\epsilon)_p \otimes M_q.$$
Note that the differential is of the form $d: \KK(M)^{p, q} \to \KK(M)^{p-1, q+1}$.

\end{defn}

With this definition, the Koszul complex $K(\mB(V^*_\epsilon))^*$ is now written as $\KK(R)$.  While our primary concern is with $\KK(R)$ itself, monodromy issues require us to control along the way the cohomology of the complex $\KK(R^{(H)})$, where $R^{(H)}$ is an $R$-module corresponding to a proper subgroup $H$ of $G$, defined in section \ref{koszul_rack_section}.

\begin{rem}

It is worth pointing out (although we will not use this fact) that one can in fact form a variant on this complex with $\mA(V^*_\epsilon)$ in place of $\mB(V^*_\epsilon)$, since $R$ is the diagonal cohomology of both of these algebras.  The resulting complex is precisely the ``Koszul-like" complex of section 4 of \cite{evw} which was constructed from the arc complex for braid groups.  One of the main technical advantages of the approach in this paper is that the complex based upon $\mB(V^*_\epsilon)$ is substantially smaller and easier to compute than the one based on $\mA(V^*_\epsilon)$.  In particular, the machinery introduced in section 6 requires the Hopf algebra $A$ to be generated in degree 1, which is true of $\mB(V^*_\epsilon)$ but not of $\mA(V^*_\epsilon)$.

\end{rem}

Since $\KK(R)$ is a complex of free right $R$-modules, for any short exact sequence
$$0 \to M' \to M \to M'' \to 0$$
of $R$-modules, there is a short exact sequence of chain complexes
$$0 \to \KK(M') \to \KK(M) \to \KK(M'') \to 0.$$
This immediately gives us:

\begin{prop} \label{K_les_prop}

There is a long exact sequence
$$\cdots \to  H^{p, q}(\KK(M')) \to H^{p, q}(\KK(M)) \to H^{p, q}(\KK(M'')) \to H^{p-1, q+1}(\KK(M')) \to \cdots$$

\end{prop}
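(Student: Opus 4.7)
The plan is to deduce the long exact sequence from a standard short-exact-sequence-of-complexes argument, so the work is really just in verifying that the construction $M \mapsto \KK(M)$ is exact and produces a genuine chain complex in the stated bidegrees.

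First I would unpack the definition from Definition \ref{k_comp_defn}: as a bigraded $k$-module, $\KK(M)^{p,q} = \mB(V^*)_p \otimes M_q$, with the differential inherited from $K(\mB(V))^* \otimes_R M$ by the formula $d(\psi \otimes m) = \sum_i \partial_{v_i}(\psi) \otimes v_i \cdot m$. Two things to check: (i) this is well-defined on the tensor product over $R$, which follows because the original differential $d(\psi \otimes r) = \sum_i \partial_{v_i}(\psi) \otimes v_i r$ on $K(\mB(V))^*$ commutes with right multiplication by $R$ (noted in the text just before Definition \ref{k_comp_defn}), so the induced map on $K(\mB(V))^* \otimes_R M$ makes sense; and (ii) the bidegree shift is $(-1,+1)$: $\partial_{v_i}$ lowers the $\mB(V^*)$-degree by one, while the left action by $v_i \in V \subseteq R_1$ raises the $M$-degree by one, consistent with the indexing in the statement.

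Next, the crucial point is that $K(\mB(V))^* = \KK(R)$ is a \emph{free} right $R$-module; indeed, as noted in Section \ref{koszul_nichols_section}, it equals $\mB(V^*) \otimes R$ with $R$ acting on the right factor. Consequently the functor $- \mapsto K(\mB(V))^* \otimes_R -$ is exact on short exact sequences of $R$-modules. Applying it to $0 \to M' \to M \to M'' \to 0$ produces a short exact sequence of bigraded $k$-modules
\[ 0 \to \KK(M') \to \KK(M) \to \KK(M'') \to 0, \]
and the maps are chain maps because they are induced from $R$-module homomorphisms, which commute with the differential (the differential only depends on the left $R$-module structure of $M$ via the action of $V$, and any $R$-linear map is equivariant for this action).

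Finally, the standard homological algebra machinery (the snake lemma / zigzag construction) applied to this short exact sequence of chain complexes yields the long exact sequence in homology. Since the differential $d$ has bidegree $(-1,+1)$, the connecting homomorphism raises cohomological degree in the first index by one less in the natural convention used throughout the excerpt, producing a connecting map $H^{p,q}(\KK(M'')) \to H^{p-1,q+1}(\KK(M'))$, exactly as stated. No real obstacles are expected; the entire argument is essentially bookkeeping, and the only substantive input is the freeness of $K(\mB(V))^*$ as a right $R$-module, which is immediate from its description.
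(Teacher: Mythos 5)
Your argument is correct and is exactly the paper's: the freeness of $K(\mB(V))^*=\KK(R)$ as a right $R$-module makes $-\otimes_R M$ exact, yielding a short exact sequence of complexes $0 \to \KK(M') \to \KK(M) \to \KK(M'') \to 0$, and the long exact sequence follows by the standard connecting-homomorphism construction. The extra bookkeeping you supply (well-definedness of the differential and the $(-1,+1)$ bidegree of the connecting map) is consistent with Definition \ref{k_comp_defn} and the remarks preceding it.
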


\subsection{The monodromy filtration on Nichols algebras of rack type} \label{koszul_rack_section}

We will now focus on Nichols algebras $\mB(V^*_\epsilon)$ in the case where $V = kc$ is the Yetter-Drinfeld module associated to a conjugation-invariant subset $c \subseteq G$ (i.e., a union of conjugacy classes), as in section \ref{rack_section}.  Notice that $R_n$ may be identified with the free $k$-vector space on the set of orbits of $B_n$ on $c^{\times n}$ under the Hurwitz action.  For a Hurwitz word $w \in R_n$ represented by $(w_1, \dots, w_n) \in c^{\times n}$, there is a well-defined subgroup $H := \langle w_1, \dots, w_n \rangle \leq G$ generated by $w$; we will call $H$ the \emph{monodromy group} associated to $w$. 

We will insist that $c$ generate the group $G$.  Let us write $\Sub_{G, c}$ for the lattice of subgroups $H \leq G$ which may be generated from $c$; that is, $H$ lies in $\Sub_{G, c}$ if and only if $H = \langle c \cap H \rangle$.

For $H \in \Sub_{G, c}$, define $F^{\geq H} R$ to be the span of all Hurwitz words whose monodromy group contains $H$.  Note that this is a two-sided ideal in $R$, and that if $H \leq K$, then $F^{\geq H} R \supseteq F^{\geq K} R$.  Define an $R$-module

$$R^{(H)} := \left. F^{\geq H} R \middle/ \left( \sum_{K > H} F^{\geq K} R \right)\right. .$$

The filtration $F$ on the ring of coefficients $R$ induces a filtration on $\KK(R)$, whose associated $H$-graded piece is precisely $\KK(R^{(H)})$.  Packaging this filtration together with Proposition \ref{K_les_prop}, we have:

\begin{cor} \label{K_ss_cor}

There is a strongly convergent spectral sequence
$$E_1^{p, q} := \bigoplus_{H \in \Sub_{G, c}} H^{p, q}(\KK(R^{(H)})) \implies H^{p, q}(\KK(R))$$
with first differential $d_1$ a sum over minimal $K > H$ of the maps 
$$d_1^{H < K}: H^{p, q}(\KK(R^{(H)})) \to H^{p-1, q+1}(\KK(R^{(K)}))$$
induced by the connecting homomorphism in Proposition \ref{K_les_prop} coming from the extension of $R^{(H)}$ by $R^{(K)}$ implicit in their definition.

\end{cor}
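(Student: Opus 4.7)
The plan is to build the spectral sequence from an increasing filtration of $R$ by two-sided ideals indexed by height in the subgroup lattice $\Sub_{G,c}$, apply the exact functor $\KK(-)$, and then unpack the $d_1$ differential in terms of coverings.

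Let $\mathrm{ht}(H)$ denote the length of the longest chain in $\Sub_{G,c}$ strictly above $H$, so that maximal elements have height $0$ and the trivial subgroup has maximal height. Define
$$\Phi_k R \;:=\; \sum_{H \in \Sub_{G,c},\ \mathrm{ht}(H) \leq k} F^{\geq H} R.$$
Each $F^{\geq H} R$ is a two-sided ideal of $R$: it is a left ideal since the monodromy group of a product of Hurwitz words contains that of each factor, and then a right ideal by the identity (\ref{left_right_eqn}). Hence $\Phi_k R$ is an ideal, and the filtration is bounded (since $\Sub_{G,c}$ is finite) and exhaustive. The first computation I would carry out is
$$F^{\geq H} R \,\cap\, \Phi_{k-1} R \;=\; \sum_{K > H} F^{\geq K} R$$
for each $H$ of height $k$, using $F^{\geq H}\cap F^{\geq H'} = F^{\geq H \vee H'}$ (with $H \vee H' \in \Sub_{G,c}$), the observation that no $H' \subsetneq H$ can have $\mathrm{ht}(H') < k$ (any chain above $H$ prolongs through $H$ to one strictly above $H'$), and that every $K > H$ in $\Sub_{G,c}$ does have $\mathrm{ht}(K) \leq k-1$ and itself appears in the sum. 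Since Hurwitz basis words have well-defined monodromy groups, this gives the vector-space identification
$$\Phi_k R / \Phi_{k-1} R \;\cong\; \bigoplus_{\mathrm{ht}(H) = k} R^{(H)}.$$

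Next, the functor $\KK(-) = K(\mB(V))^* \otimes_R (-)$ is exact in the $R$-module argument, because $K(\mB(V))^* \cong \mB(V^*) \otimes R$ is free as a right $R$-module. The ideal filtration of $R$ therefore passes to a bounded filtration of the chain complex $\KK(R)$ by subcomplexes with associated graded $\bigoplus_k \bigoplus_{\mathrm{ht}(H) = k} \KK(R^{(H)})$, and the standard spectral sequence of a filtered complex yields the strongly convergent spectral sequence with $E_1$-page $\bigoplus_H H^{p,q}(\KK(R^{(H)}))$. By construction, its $d_1$ is the connecting homomorphism of
$$0 \to \Phi_{k-1}/\Phi_{k-2} \to \Phi_k/\Phi_{k-2} \to \Phi_k/\Phi_{k-1} \to 0$$
after applying $\KK$.

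Finally, to match $d_1$ with the claimed description, I would decompose this boundary according to the direct-sum structures on source and target. For $\mathrm{ht}(H) = k$ and $\mathrm{ht}(K) = k-1$, lift a cycle $x_H \in \KK(R^{(H)})$ to $\tilde x_H \in \KK(F^{\geq H} R)$; then $d\tilde x_H$ already lies in $\KK(F^{\geq H} R \cap \Phi_{k-1}) = \KK\bigl(\sum_{L > H} F^{\geq L} R\bigr)$, so its projection to $\KK(R^{(K)})$ vanishes unless $K > H$. When $K > H$ with $\mathrm{ht}(K) = \mathrm{ht}(H) - 1$, any strictly intermediate $L$ would satisfy both $\mathrm{ht}(L) \geq \mathrm{ht}(K) + 1 = k$ and $\mathrm{ht}(L) \leq \mathrm{ht}(H) - 1 = k - 1$, an impossibility, so $K$ covers $H$; conversely every covering of such an $H$ lies at height $k - 1$. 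By naturality of the connecting homomorphism under the inclusion into $\Phi_k/\Phi_{k-2}$ of the two-term sub-quotient
$$F^{\geq H} R \,\big/ \sum_{L > H,\ L \neq K} F^{\geq L} R,$$
which is an extension $0 \to R^{(K)} \to \bullet \to R^{(H)} \to 0$, the surviving component of $d_1$ is exactly the map $d_1^{H<K}$ produced by Proposition \ref{K_les_prop}. I expect the trickiest step to be this final naturality verification --- checking that the connecting map of the filtration and the Proposition \ref{K_les_prop} boundary really do coincide under the embedding of the relevant sub-extension --- but this is a standard diagram chase rather than a substantive difficulty.
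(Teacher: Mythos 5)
Your construction is precisely the argument the paper leaves implicit in the phrase ``packaging this filtration together with Proposition \ref{K_les_prop}'': filter $R$ by the two-sided ideals $\Phi_k R=\sum_{\mathrm{ht}(H)\le k}F^{\geq H}R$, check via the decomposition of the Hurwitz basis by exact monodromy group that $\Phi_kR/\Phi_{k-1}R\cong\bigoplus_{\mathrm{ht}(H)=k}R^{(H)}$, use exactness of $\KK(-)$ (freeness of $K(\mB(V))^*$ over $R$) to transport this to a bounded filtration of the complex $\KK(R)$, and take the spectral sequence of the filtered complex. All of this is correct, including the identification $F^{\geq H}R\cap\Phi_{k-1}R=\sum_{K>H}F^{\geq K}R$ and the comparison of the surviving components of $d_1$ with the connecting maps of Proposition \ref{K_les_prop} via the two-step subquotient $F^{\geq H}R/\sum_{L>H,\,L\neq K}F^{\geq L}R$.

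One sub-claim is false, though harmlessly so: ``conversely every covering of such an $H$ lies at height $k-1$'' fails when $\Sub_{G,c}$ is not a ranked poset, since a minimal $K>H$ may have $\mathrm{ht}(K)<\mathrm{ht}(H)-1$ (the longest chain above $H$ need not pass through $K$). For such $K$ the component $d_1^{H<K}$ is not part of the $d_1$ of the height filtration but is deferred to a later differential. So your spectral sequence realizes the corollary's $d_1$ only as the sum over those minimal $K>H$ with $\mathrm{ht}(K)=\mathrm{ht}(H)-1$; no strictly order-reversing filtration index can do better unless the lattice is ranked, so this is best read as an imprecision in the corollary's wording rather than a defect of your argument. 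Nothing downstream is affected: the vanishing theorem that follows uses only the $E_1$-page and strong convergence.
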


\subsection{Multiple Koszul differentials} \label{mult_koszul_section} 

In this section, we show that the Koszul complex attached to a Nichols algebra of rack type actually carries multiple differentials, and that in particular the graded pieces in the filtration on $\KK(R)$ introduced in the previous section thereby acquire the structure of a multiple complex.  Notice, first, that $c$ serves as a natural basis for $V_\epsilon = kc \otimes \epsilon$.

Let $S$ be a subrack of $c$; that is, $S$ is a subset of $c$ closed under the conjugacy operation.  Let $M$ be a graded left $R$-module and write $d_S$ for the operator on $\KK(M)$ given by 
\beq
d_S(\psi \otimes m) = \sum_{w \in S} \partial_{w^*}(\psi) \otimes w m
\eeq
Note that when $S=c$, the operator $d_S$ is just the differential $d$ of $\KK(M)$.  In fact, as we now show, $d_S$ is a differential for any subrack $S$.

\begin{prop}  The operator $d_S$ is a differential.
\label{dsdiff}
\end{prop}

\begin{proof}
The proof that $d_S^2 = 0$ is more or less the same as the argument that the Koszul complex of $\mB(V)$ is a complex.  We have
\begin{eqnarray*}
d_S^2 (\psi \otimes m) & = & d_S \left( \sum_{w' \in S} \partial_{w'^*}(\psi) \otimes w' m \right) \\
 & = & \sum_{w \in S} \sum_{w' \in S} \partial_{w^*} \partial_{w'^*}(\psi) \otimes ww' m \\
 & = & \sum_{w, w' \in S} \partial_{w^*w'^*}(\psi) \otimes w w' m
\end{eqnarray*}

We note that the product $ww'$ acting on $m$ is the product in the ring $R$, while the product $w^*w'^*$ in $\partial_{w^*w'^*}$ is the product in the Nichols algebra $\mB(V_{\epsilon}^*)$. 
 
For each $r \in R_2$, write $\Sigma(r)$ for the set of $(w,w') \in S \times S$ such that $ww' = r$ . We note that (by definition of $R$) the set $\Sigma(r)$, when it is not empty, consists of a single orbit of the braid group $B_2$ on $S \times S$.  Write $\partial_{\Sigma(r)}$ for the operator on $\mB(V_{\epsilon})$ defined by
\beq
 \partial_{\Sigma(r)} = \sum_{(w,w') \in \Sigma(r)} \partial_{w^*w'^*}.
\eeq
 
We can now write
\beq
d_S^2 (\psi \otimes m) = \sum_{r \in R_2} \partial_{\Sigma(r)}(\psi) \otimes r m.
\eeq
It thus suffices to prove that $\partial_{\Sigma(r)}$ is the zero operator for all $r \in R_2$; because $\partial$ is an action of the algebra $\mB(V^*_{\epsilon})$ on $\mB(V_{\epsilon})$, we only need to show that
\beq
\sum_{(w,w') \in \Sigma(r)} w^*w'^*= 0 \in \mB(V^*_{\epsilon})_2.
\eeq

When $\Sigma(r)$ is empty, this is vacuous.  Otherwise, $\Sigma(r)$ consists of a braid orbit in $S \times S$.  In particular, writing as usual $\sigma: V \tensor V \ra V \tensor V$ for the braiding operation $w \tensor w'  \mapsto w' \tensor w^{w'}$, we have
\beq
\sigma \left( \sum_{(w,w') \in \Sigma(r)} w \tensor w'  \right) = \sum_{(w,w') \in \Sigma(r)} w \tensor w' \in V \tensor V
\eeq
since the action of $\sigma$ just permutes the summands.  Thus on $V^*_{\epsilon} \tensor V^*_{\epsilon}$, 
\beq
\sigma \left( \sum_{(w,w') \in \Sigma(r)} w^* \tensor w'^*  \right) = -\sum_{(w,w') \in \Sigma(r)} w^* \tensor w'^* \in V \tensor V
\eeq

But note that the quantum symmetrizer of part 3 of Proposition~\ref{nichols_characterization_prop} is just $\mS_2 = 1 + \sigma$.  We conclude that
\beq
\sum_{(w,w') \in \Sigma(r)} w^* \tensor w'^* \in \ker \mS_2
\eeq
which means $\sum_{(w,w') \in \Sigma(r)} w^* \tensor w'^*$ is killed by the projection $V_{\epsilon}^* \tensor V_{\epsilon}^* \ra \mB(V_{\epsilon}^*)_2$, and we are done.
\end{proof}

Proposition~\ref{dsdiff} is especially interesting in the case where $M=R^{(H)}$, with $H$ a subgroup of $G$ generated by $c \cap H$.  In this case, we note that the differential $d$ in $\KK(R^{(H)})$ is just $d_{c \cap H}$, since the contribution to $d(\psi \tensor m)$ of any $w$ not in $H$ is $\partial_w(\psi) \tensor wm = 0$.  

If $c_1, \ldots, c_m$ are the $H$-conjugacy classes in $c \cap H$, each $c_i$ is a subrack of $c$, so we can define differentials $d_i = d_{c_i}$ on $\KK(R^{(H)})$.  What's more, $R^{(H)}$ carries $m$ gradings, where the $(q_1, \ldots, q_m)$-graded piece of $R^{(H)}$ is spanned by Hurwitz words containing $q_i$ representatives from $c_i$. If $M$ is an $m$-tuply graded $R^{(H)}$-module, then $\KK(M)$ is $m$-tuply graded as well.   We note that each $d_i$ is a graded operator (in the $q_i$ index), sending $\KK(M)_{p,q_1, \ldots, q_m}$ to $\KK(M)_{p+1,q_1, \ldots, q_i-1, \ldots, q_m}$. It follows that the cohomology of the Koszul complex $\KK(M)$ also acquires an $m$-tuple grading. 

\begin{prop} The differentials $d_1, \ldots, d_m$ in $\KK(M)$ defined above give $\KK(M)$ the structure of an $m$-tuply graded multicomplex.
\end{prop}

\begin{proof} All that has to be checked is that $d_i d_j = - d_j d_i$ for all $i \neq j$.   Since $c_i \sqcup c_j$ is a subrack, Proposition~\ref{dsdiff} guarantees that $d_{c_i \sqcup c_j} = d_i + d_j$ is a differential. So
\beq
d_i^2 = d_j^2 = (d_i + d_j)^2 = 0
\eeq
which implies that $d_i$ and $d_j$ anticommute.
\end{proof}

\begin{rem} The argument of \cite{evw} required a strong hypothesis of {\em non-splitting} on the pair $(G,c)$; in the language of the present paper, this hypothesis was that $m=1$ for every subgroup $H < G$ appearing in the filtration, so that multicomplexes never appeared.  The algebraic methods used in the present paper are more able to handle the complications arising from the multicomplex structure, which is what allows the results of this paper to dispense with the non-splitting condition.
\end{rem}

\subsection{High degree vanishing of cohomology of Koszul complexes}

\label{ss:vanishingkoszul}

We are now ready to show that the multicomplexes above have cohomology that vanishes above a specific multidegree.  We note that if $x$ is an element of $R$, right multiplication $\rho_x: R \to R$ by $x$ is a left $R$-module map, and thus provides a map from $\KK(R)$ to $\KK(R)$, and likewise from $\KK(R^{(H)})$ to $\KK(R^{(H)})$.  We study these maps in particular for elements $g \in c_i \subset R_1$; in this case, right multiplication on coefficients by $g$ increases the $i\nth$ grading by $1$ and leaves the others unchanged.

\begin{lem} \label{cp_lem}

Let $H$ and $c_1, \ldots, c_m$ be as above.  For each $i = 1, \dots, m$, there exists a constant $B_i$ such that for every $g \in c_i$, if $q_i> B_i$ (and for all $p$ and $q_j$ for $j \neq i$), right multiplication by $g$ is an isomorphism
$$\KK(R^{(H)})_{p, q_1, \dots, q_i, \dots, q_m} \to \KK(R^{(H)})_{p, q_1, \dots, q_i+1, \dots, q_m}$$

\end{lem}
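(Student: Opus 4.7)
The plan is to first use the tensor decomposition of $\KK(R^{(G)})$ to reduce the statement to a bijectivity claim about right multiplication on $R^{(G)}$ itself, and then invoke a stabilization result of Conway-Parker/Fried-V\"olklein type for $B_n$-orbits on tuples in $c^{\times n}$ generating $G$.

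For the reduction, by Definition \ref{k_comp_defn} the bigraded $k$-module $\KK(R^{(G)})^{p,q_1,\ldots,q_m}$ is canonically isomorphic to $\mB(V_\epsilon^*)_p \otimes_k R^{(G)}_{q_1,\ldots,q_m}$, and the right $R$-action factors through the second tensor factor (right multiplication by a fixed $g$ does not invoke the differential operators $\partial_{v_i}$ and so acts trivially on the $\mB(V_\epsilon^*)_p$-factor). Thus it suffices to produce, for each $i$, a constant $B_i$ such that whenever $q_i > B_i$, right multiplication by every $g \in c_i$ is a bijection
$$R^{(G)}_{q_1,\ldots,q_m} \longrightarrow R^{(G)}_{q_1,\ldots,q_i+1,\ldots,q_m}.$$
Next, using Proposition \ref{sym_prop} and the description of $R$ as $\Sym_{\br}(V)$, I would identify $R^{(G)}_{q_1,\ldots,q_m}$ with the free $k$-module on the set of $B_n$-orbits (Hurwitz action, with $n = \sum_j q_j$) of tuples $(w_1,\ldots,w_n) \in c^{\times n}$ having exactly $q_j$ entries from $c_j$ and $\langle w_1,\ldots,w_n\rangle = G$. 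Under this identification, right multiplication by $g$ becomes the ``append $g$'' map on orbits, so bijectivity is now a purely combinatorial, Hurwitz-theoretic assertion.

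At this point I would invoke the promised variant of the Conway-Parker/Fried-V\"olklein theorem: there exist constants $B_i$, depending only on $G$ and the conjugacy classes $c_j$, such that for $q_i > B_i$ the above orbit sets are parametrized by a ``boundary monodromy'' invariant taking values in a fixed abelian quotient of $G$ (a quotient involving $G^{\rm ab}$ and $H_2(G,\Z)$). Under such a parametrization, appending $g$ corresponds to translation by the image of $g$, which is tautologically a bijection, and both injectivity and surjectivity of ``append $g$'' in the stable range follow.

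The hard part is setting up the CPFV variant in the exact form needed here: the bound $B_i$ must depend only on $c_i$ (and $G$), not on the other multiplicities $q_j$, so stability must hold class-by-class rather than only when every $q_j$ is large simultaneously. Securing this uniform stability is the main technical input, and should follow from a careful adaptation of the classical Conway-Parker argument, leveraging the fact that for $q_i$ large the Hurwitz action is sufficiently transitive on the positions occupied by $c_i$-entries to move any prescribed $B_n$-conjugate of $g$ to the last coordinate.
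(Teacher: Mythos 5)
Your reduction to the claim that appending $g$ is a bijection on $R^{(G)}_{q_1,\dots,q_m}$ is correct, and matches what the paper does implicitly: right multiplication by $g$ acts only on the $R^{(G)}$ tensor factor of $\KK(R^{(G)}) = \mB(V_\epsilon^*)\otimes R^{(G)}$, so the lemma is equivalent to the corresponding statement about Hurwitz orbits. The gap is exactly the step you flag as ``the main technical input'': you invoke a Conway--Parker/Fried--V\"{o}lklein classification of the stable orbit sets by a lifting invariant, in a form where stability holds as soon as the \emph{single} multiplicity $q_i$ exceeds a bound depending only on $G$ and $c_i$, uniformly in the other $q_j$. The classical Conway--Parker theorem does not give this --- its stable range requires every class to appear many times --- and you do not supply the adaptation. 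Since your entire conclusion (bijectivity as translation by the image of $g$ under the lifting-invariant parametrization) rests on this unproved classification, the argument is incomplete as written.

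The paper's proof shows that this heavy input is unnecessary, and that only the \emph{surjectivity} half needs a combinatorial argument. Let $d_i$ be the common order of the elements of $c_i$ and set $b_i = d_i\cdot \#c_i$. If $q_i > b_i$, pigeonhole produces some $h\in c_i$ occurring at least $d_i+1$ times in a Hurwitz word $w$ with monodromy group $G$; the block $(h,\dots,h)$ of length $d_i$ has trivial product in $G$, so by the argument of Proposition 3.4 of \cite{evw} (or the appendix to \cite{fried-volklein}) it can be braided to the end of the word and then conjugated by the subgroup generated by the remaining letters --- which is still all of $G$, since at least one $h$ remains --- turning it into $(g,\dots,g)$ for any prescribed $g\in c_i$. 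This exhibits $w$ as $w'\cdot g$, i.e.\ appending $g$ is surjective once $q_i > b_i$. Injectivity then comes for free: for fixed $q_j$ ($j\neq i$) one has a chain of surjections of finite-dimensional vector spaces indexed by increasing $q_i$, and such a chain must eventually consist of isomorphisms; $B_i$ is chosen past that stabilization point. No identification of the stable orbit set is required. To salvage your route you would have to actually prove the class-by-class stable classification, which is considerably harder than the lemma itself.
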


\begin{proof}  By definition, every Hurwitz word $w = (g_1, \dots, g_{q})$ in $R_{q_1, \dots, q_m}^{(H)}$ has full monodromy group $H$.  Note that each element of $c_i$ has a common order $d_i$, and define 
$$b_i: = d_i \cdot \#c_i.$$  
Then, if $w \in R_{q_1, \dots, q_m}^{(H)}$, and $q_i>b_i$, there is some element $h \in c_i$ which appears at least $d_i+1$ times in the word $w$.

Following the argument of Proposition 3.4 of \cite{evw} or the appendix to \cite{fried-volklein}, one can use the braid action to move $h^d$ to the end of the word $w$, in such a way that the remainder of the word still generates $H$.  Then one may use braid moves to conjugate $h^d$ to $g^d$ for any $g \in c_i$.  So we can write
$$w = w' \cdot g.$$
That is: right multiplication by $g$ is a surjection.

Therefore, if $q_i>b_i$, and $g \in c_i$, we have a sequence of surjections
$$\xymatrix@1{
R_{q_1, \dots, q_i-1, \dots, q_m}^{(H)} \ar[r]^-{- \cdot g} & R_{q_1, \dots, q_i, \dots, q_m}^{(H)}  \ar[r]^-{- \cdot g} & R_{q_1, \dots, q_i+1, \dots, q_m}^{(H)} \ar[r]^-{- \cdot g} & \cdots}$$
However, since the vector spaces in question have finite dimension, these surjections must eventually be isomorphisms.  Let $B_i$ be chosen so that if $q_i>B_i$, right multiplication by $g$, 
$$- \cdot g: R_{q_1, \dots, q_i, \dots, q_m}^{(H)}  \to R_{q_1, \dots, q_i+1, \dots, q_m}^{(H)}$$
is an isomorphism for every $g \in c_i$.  This induces the claimed isomorphism between the corresponding graded pieces of the Koszul complex.

\end{proof}

Having shown that right multiplication by $g$ is an isomorphism on $\KK(R^{(H)})$ in sufficiently high degree, we now show it is homotopic to zero (in {\em every} degree.)  In fact, the argument below will work for a more general bimodule $M$ in place of $R^{(H)}$, but we do not need this level of generality here. 

\begin{prop} \label{mult_vanish_prop}

Let $S$ be a subrack of $c \cap H$ and write $\KK_S(R^{(H)})$ for the complex $\mB(V_\epsilon) \tensor_R R^{(H)}$ with $d_S$ as differential.  Then, for any $g \in S$, right multiplication by $g$ is nullhomotopic on $\KK_S(R^{(H)})$.

\end{prop} 

\begin{rem}  Note in particular that the statement of the Proposition applies to $\KK(R^{(H)})$ by taking $S = c \cap H$.
\end{rem}

\begin{proof}

Define a map
$$P_g: \KK_S(R^{(H)})^{p, q} \to \KK_S(R^{(H)})^{p+1, q} \; \mbox{ by } \; \psi \otimes r \mapsto g \psi \otimes r.$$
That is, $P$ is given by left multiplication by $g \in c \subseteq V_{\epsilon} = \mB(V_{\epsilon})_1$ in the first tensor factor. 
We compute:
$$P_gd_S(\psi \otimes r) = \sum_{h \in S} g \partial_{h^*}(\psi) \otimes h r$$
and 
\begin{eqnarray*}
d_SP_g(\psi \otimes r) & = & \sum_h \partial_{h^*}(g \psi) \otimes h r \\
 & = & \sum_h \partial_{h^*}(g) \psi^{h^{-1}} \otimes h r + \sum_h g \partial_{h^*}(\psi) \otimes h r \\
 & = & \psi^{g^{-1}} \otimes g r + \left(\sum_h g \partial_{h^*}(\psi) \otimes h r \right)
\end{eqnarray*}
so
$$[d_SP_g-P_gd_S](\psi \otimes r) = \psi^{g^{-1}} \otimes g r = \psi^{g^{-1}} \otimes r^{g^{-1}} g = \psi^{g^{-1}} \otimes (rg)^{g^{-1}}$$
which shows that $\psi \otimes r \mapsto \psi^{g^{-1}} \otimes (rg)^{g^{-1}}$ is nullhomotopic.  However, this map is conjugate (using the right $G$ action on $\KK_S(R^{(H)})$ coming from the Yetter-Drinfeld structure) to the map $\psi \otimes r \mapsto \psi \otimes r g$, so that map is nullhomotopic too, which was the claim to be proved.

\end{proof}

\begin{rem} The same computation shows that $P_g$ and $d_S$ commute when $d_S$ is the differential arising from a subrack $S$ {\em not} containing $g$.
\label{pgcommute}
\end{rem}

\begin{cor}
\label{cohboundrh}

If, for any $i$, $q_i>B_i$, then $H^{p, q_1, \dots, q_m}(\KK(R^{(H)})) = 0$.

\end{cor}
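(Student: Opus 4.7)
The plan is to exploit a direct tension between Lemma \ref{cp_lem} and Proposition \ref{mult_vanish_prop}. Lemma \ref{cp_lem} says that right multiplication by any $g \in c_i$ is an isomorphism at the chain level (in the relevant multidegree) whenever $q_i > B_i$; Proposition \ref{mult_vanish_prop} says that any such multiplication acts by zero on Koszul cohomology. An isomorphism that is simultaneously the zero map forces both source and target to vanish.

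The only thing to check before these fit together is that Proposition \ref{mult_vanish_prop} applies to $R^{(G)}$. By definition $R^{(G)} = F^{\geq G}R / \sum_{K > G} F^{\geq K}R$, and since $G$ is the top element of $\Sub_{G,c}$ the denominator vanishes, so $R^{(G)} = F^{\geq G}R$. This is a two-sided ideal, since the monodromy group of a product of Hurwitz words contains the monodromy group of either factor. Thus, taking $I = F^{\geq G}R$ and $J = 0$, Proposition \ref{mult_vanish_prop} asserts that right multiplication by every $x \in R_{>0}$ is zero on $H^{*,*}(\KK(R^{(G)}))$. Now fix $i$ with $q_i > B_i$ and pick any $g \in c_i$. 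Lemma \ref{cp_lem} supplies an isomorphism of $R^{(G)}$ in the stated multidegrees given by right multiplication; tensoring with the left-hand factor of $\KK$, and using that right multiplication commutes with the Koszul differential (which acts only on the left factor), we obtain an isomorphism of Koszul chain complexes, and hence an induced isomorphism
$$H^{p,q_1,\ldots,q_i,\ldots,q_m}(\KK(R^{(G)})) \to H^{p,q_1,\ldots,q_i+1,\ldots,q_m}(\KK(R^{(G)})).$$
This isomorphism is also the zero map, so $H^{p,q_1,\ldots,q_m}(\KK(R^{(G)})) = 0$, as required.

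I do not foresee a substantive obstacle: both inputs are in hand, and all the creative work is invested upstream in the Conway-Parker/Fried-V\"{o}lklein style argument underlying Lemma \ref{cp_lem} and in the explicit nullhomotopy of Proposition \ref{mult_vanish_prop}. The sole point requiring brief attention is that maximality of $G$ promotes $R^{(G)}$ from a subquotient of ideals to a genuine ideal, so that Proposition \ref{mult_vanish_prop} applies cleanly.
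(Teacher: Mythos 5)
Your overall strategy --- playing the chain-level isomorphism of Lemma \ref{cp_lem} against the vanishing statement of Proposition \ref{mult_vanish_prop} --- is exactly the paper's, and your observation that $R^{(G)} = F^{\geq G}R$ is a genuine two-sided ideal (so that Proposition \ref{mult_vanish_prop} applies with $J = 0$) is correct. The gap is at the step where you pass from the multidegree-wise isomorphism to ``an isomorphism of Koszul chain complexes'' and hence of $H^{p,q_1,\dots,q_m}$. The Koszul differential does \emph{not} act only on the left factor: $d(\psi\otimes r) = \sum_w \partial_w(\psi)\otimes wr$ applies $\partial_w$ to the $\mB(V^*)$ factor \emph{and} multiplies the $R^{(G)}$ factor by $w$, so it sends $\mB(V_\epsilon^*)_p\otimes R^{(G)}_{q_1,\dots,q_m}$ into $\bigoplus_j \mB(V_\epsilon^*)_{p-1}\otimes R^{(G)}_{q_1,\dots,q_j+1,\dots,q_m}$. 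Consequently there is no subcomplex of $\KK(R^{(G)})$ concentrated in a fixed multidegree $(p,q_1,\dots,q_m)$; the symbol $H^{p,q_1,\dots,q_m}$ itself needs interpretation (the total differential preserves only the quantities $p_i+q_i$, not the individual $q_i$); and a chain map that is an isomorphism on the chain groups in a range of multidegrees need not induce an isomorphism on cohomology at a given spot, since the relevant cocycles and coboundaries live in neighboring multidegrees --- including ones whose $i$-th index is $q_i-1$, which may equal $B_i$, where Lemma \ref{cp_lem} gives nothing.

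This is precisely the difficulty the paper's proof is engineered to handle. It uses the decomposition $d = \sum_i d_i$ into anticommuting differentials, each of which \emph{is} honestly multigraded, and works with the iterated homology $\HHH^{p,q_1,\dots,q_m}$, the $E_1$-term of a sequence of spectral sequences converging to the cohomology in question. It then verifies that $P_g(\psi\otimes r) = \psi g^*\otimes r$ is a nullhomotopy of right multiplication by $g$ for the differential $d_i$ \emph{alone} and commutes with $d_j$ for $j\neq i$, so that it descends through the iterated homologies --- note that citing Proposition \ref{mult_vanish_prop}, which concerns the total differential, would not suffice for this --- while $\rho_g$ remains an automorphism there by Lemma \ref{cp_lem}. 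Your argument can be repaired along these lines, but as written the clean ``isomorphism of complexes that is also zero on cohomology'' reduction is not available from the two cited results alone.
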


\begin{proof}

Define
$$\HHH^{p, q_1, \dots, q_m}(\KK(R^{(H)})) = H^*(H^*( \cdots(H^*(\KK(R^{(H)}), d_1), \dots,) d_{m-1}), d_m)^{p, q_1, \dots, q_m}$$
to be the iterated homology of the Koszul complex $\KK(R^{(H)})$ with respect to each of the differentials $d_i$.  This is the $E_1$ term of the first of a series of $m-1$ spectral sequences that converge ultimately to $H^{p, q_1, \dots, q_m}(\KK(R^{(H)}))$; this is the $m$-fold analogue of the spectral sequence of a bicomplex.  Thus, it suffices to show that $\HHH^{p, q_1, \dots, q_m}(\KK(R^{(G)}))$ vanishes for $q_i>B_i$.

By  Proposition \ref{mult_vanish_prop}, applied with $S=c_i$, we have that right multiplication by any $g \in c_i$ is a nullhomotopy with respect to the $d_i$ differential.  Moreover, by Remark~\ref{pgcommute},  $d_j$ and $P_g$ commute when $i \neq j$.  Consequently, for each $j<i$, $P_g$ descends through the homology with respect to $d_j$ to give a nullhomotopy of multiplication by $g$.  However, the previous Lemma shows that when $q_i > B_i$, multiplication by $g$ induces a self-isomorphism of the Koszul complex (and hence of the iterated homology).  Thus the isomorphism $g$ is also $0$ whenever $q_i>B_i$.

\end{proof}

\begin{thm} \label{eventually_zero_thm}

For each $H \in \Sub_{G, c}$, there exists a constant $B_H$ so that if $q>B_H$ (and for all $p$), $H^{p, q}(\KK(R^{(H)})) = 0$. Having specified these constants, we have $H^{p, q}(\KK(R)) = 0$ for all $q> \max_H (B_H)$.

\end{thm}

\begin{proof}

The second statement follows from the first via the spectral sequence of Corollary \ref{K_ss_cor}. 

Decompose $c \cap H = c_1 \sqcup \dots \sqcup c_m$ into a disjoint union of conjugacy classes; then Corollary~\ref{cohboundrh} shows that $H^{p, q_1,\dots, q_m} \KK(R^{(H)}) = 0$ whenever any $q_i$ is greater than some constant $B_i$ which depends only upon $H$ and $c_i$.  Taking $B_H = \sum B_i$, if $q = \sum q_i > B_H$, then for some $i$, it must be the case that $q_i>B_i$, so $H^{p, q}(\KK_H(R^{(H)})) = 0$.

\end{proof}

\begin{rem} It might be complicated to write down a good bound for the threshold $B_H$ in general, but for any particular choice of $G$ and $c$, one should be able to compute $B_H$ directly; it depends only on the combinatorics of braid orbits on long tuples of elements of $c \cap H$.  Getting good explicit bounds for $B_H$ would provide explicit bounds for how large the finite field $\F_q$ must be in order for the main counting results of this paper to apply, which would certainly be of interest.
\end{rem}

\subsection{Cohomological growth of Nichols algebras}

\begin{cor} \label{koszul_bound_cor}

There is a constant $D = D(G, c)$ such that
$$\rk(\bigoplus_{q \geq 0} H^{1+j, q}(\KK(R))) \leq D(\# c)^j.$$

\end{cor}

\begin{proof}

Theorem \ref{eventually_zero_thm} shows that the terms in this sum vanish for $q> \mu := \max_H (B_H)$.  The rank of this cohomology is bounded by that of the Koszul complex itself, so:
\begin{eqnarray*} \rk_k \bigoplus_{q = 0}^{\mu} H^{1+j, q}(\KK(R)) & \leq & \rk_k \mB(V_\epsilon)_{1+j} \cdot \left( \sum_{q=0}^{\mu} \rk_k R_q \right) \\
\end{eqnarray*}
Since $\mB(V_\epsilon)$ is a quotient of the tensor algebra on $V_{\epsilon}$, the rank of the first term in the product is bounded by $(\# c)^{1+j}$.  The result follows if we set 
$$D := (\#c) \sum_{q=0}^{\mu} \rk_k R_q.$$

\end{proof}

\begin{cor} \label{we_win_cor}

For arbitrary $(G, c)$, if $V = kc$, Hypothesis \ref{e_o_conj} holds for $\mB(V_\epsilon^*)$: there is a constant $C_{\mB}$ such that the ranks of the cohomology of $\mB(V^*_{\epsilon})$ are bounded as
$$\rk(\Ext_{\mB(V^*_{\epsilon})}^{n-j, n}(k, k)) \leq p(n) C_{\mB}^j,$$
where $p(n) = \max_{m \leq n} \dim_k (R_m).$ 

Consequently Hypothesis \ref{exp_hyp} holds for $\mA(V^*_{\epsilon})$: if $C  = 4\max\{C_{\mB}, \#c\}^2$, then
$$\rk(\Ext_{\mA(V^*_{\epsilon})}^{n-j, n}(k, k)) \leq p(n) C^j.$$

\end{cor}

\begin{rem} It follows from Lemma~\ref{cp_lem} (the Conway-Parker-Fried-V\"{o}lklein argument) that $p(n)$ is a polynomial for $n$ large enough.  In fact, we can describe its degree explicitly; if $m$ is the the maximum over all subgroups $H < G$ of the number of $H$-conjugacy classes making up $c \cap H$, then $\deg p(n) = m-1$.  In particular, $p(n)$ is eventually constant if and only if $c$ consists of a single conjugacy class and $(G,c)$ satisfies the non-splitting condition of \cite{evw}.  We note that the quantity $m$ is also the Gelfand-Kirillov dimension of $R$.   
\label{rem:pnpoly}
\end{rem}

\begin{proof}

The second statement follows from the first by Theorem \ref{alg_bound_thm}. 

Theorem \ref{main_thm} (with $A = \mB(V_{\epsilon}^*$) and $S=R$) gives
$$\dim \Ext_{\mB(V^*_{\epsilon})}^{n-j, n}(k, k)) \leq \sum_{m=0}^j \dim (H^*(\KK(R)))^{\tensor m} \otimes R)_{n-j,n}.$$
This bound\footnote{Given that $\Ext_{\mB(V_\epsilon^*)}(k, k)$ is a braided symmetric algebra, bounding its growth by that of a tensor algebra over $R$ is undoubtedly wildly inefficient.  However, for our purposes, the above suffices.} is none other than the dimension of the bidegree $(n-j,n)$ piece of $U \otimes R$, where $U$ is the superdiagonal subalgebra of the tensor algebra $T(H^*(\KK(R)))$.  

We now apply Proposition \ref{exp_growth_tensor_prop} with $M = H^*(\KK(R))$, thought of as a $k$-vector space graded by topological degree.  Corollary \ref{koszul_bound_cor} says exactly that the $j$th graded piece of $M$ has dimension at most $D(\#c)^j$.  It now follows from Proposition \ref{exp_growth_tensor_prop} that $U_j$ has dimension at most $[(4D)(\#c)]^j$.  Thus $U_j \tensor R$ is a free $R$-module on at most $[(4D)(\#c)]^j$ generators, all of them of nonnegative bidegree; it follows immediately that the bidegree $(n-j,n)$ piece of $U \otimes R$ has dimension at most $p(n) [(4D)(\#c)]^j$.  This is a bound of the form claimed.
\end{proof}

\section{Malle's conjecture over function fields} \label{malle_section}

\subsection{Hurwitz spaces} \label{hur_rack_section}

Let $c \subseteq G$ be a union of conjugacy classes in a finite group $G$.  We will regard $c$ as a rack and obtain a braided vector space $V = kc$ via the definition in Section \ref{rack_section}.  Note that the action of $B_n$ on $V^{\otimes n}$ is by permutation of the basis $c^{\times n}$ of $V^{\otimes n}$.  We recall that $B_n = \pi_1 \Conf_n(\C)$ is the fundamental group of the (unordered) configuration space of the plane.

\begin{defn}

The $n^{\rm th}$ \emph{Hurwitz space} of $\C$ associated to the pair $(G, c)$ is the quotient
$$\Hur_{G, n}^c := \widetilde{\Conf}_n(\C) \times_{B_n} c^{\times n}$$
where $\widetilde{\Conf}_n(\C)$ is the universal cover of $\Conf_n(\C)$, and the action of the braid group is diagonal.  There is an isomorphism
$$H_*(\Hur_{G, n}^c; k) \cong H_*(B_n, V^{\otimes n}).$$

\end{defn}

\begin{rem} The space $\Hur_{G, n}^c$ may be identified with the moduli space of branched covers of the unit disk $D \subseteq \C$ with $n$ branch points in the interior of $D$, Galois group $c$, and a trivialization of the covering over $1 \in \partial D$.  Furthermore, we may define a subspace $\CHur_{G, n}^c$ to be the union of components consisting of branched covers whose domain is connected.  See section 2.3 of \cite{evw} for details.  \end{rem}

We notice that 
$$\Hur_G^c := \coprod_n \Hur_{G, n}^c$$
forms an $A_\infty$ H-space; on the base $\coprod \Conf_n(\C)$ this is the usual $A_\infty$ (in fact, $E_2$) multiplication given by the natural action of the 2-dimensional little disks operad. On the fibres it is the identity map $c^{\times m} \times c^{\times n} \to c^{\times m+n}$, which is equivariant for the homomorphism $B_m \times B_n \to B_{m+n}$.  

Corollary \ref{main_cor} gives us a ring isomorphism
$$H_j \Hur_{G, n}^c \cong \Ext_{\mA(V^*_\epsilon)}^{n-j, n}(k, k)$$
between the homology of the Hurwitz space with this product and the cohomology of the quantum shuffle algebra $\mA = \mA(V^*_\epsilon)$.

\subsection{Counting $G$-covers via the \'{e}tale cohomology of Hurwitz schemes} 
\label{ss:hurwitzschemes}

Malle's conjecture concerns extensions of a global field $K$ with Galois group $G$.  When $K$ is a rational function field $\F_q(t)$, there is a natural bijection between extensions of $K$ and Galois covers of $\P^1$ over $\F_q$; the latter objects can be approached by means of counting $\F_q$-rational points on a certain moduli scheme.  We make this more precise as follows, following the treatment of \cite[\S 7]{evw}.  The following definition is adapted from \cite[\S 2.1]{romagnywewers}.  As is the case throughout this paper, we assume that $|G|$ is prime to the characteristic of $K$, so that all tameness conditions in \cite{romagnywewers} are automatically satisfied.

\begin{defn} Let $G$ be a finite group and $k$ a field.  A {\em tame $G$-cover of $\P^1$ over $k$} is a triple $(X,f,\tau)$ where
\begin{itemize}
\item $X/k$ is a smooth proper geometrically connected curve;
\item $f: X \ra \P^1 / k$ is a tame Galois cover; that is, $f$ is a finite separable map,  \'{e}tale away from a reduced divisor $D \subset \P^1$, such that $\Aut(f)$ acts transitively on every geometric fiber of $f$, and such that the ramification of $f$ over each geometric point of $D$ is nontrivial and prime to the characteristic of $k$ (i.e. $f$ is tamely ramified.)
\item $\tau: G \ra \Aut(f)$ is an isomorphism.
\end{itemize}
\label{def:gcover}
\end{defn}

In fact, we will require from this point on that the characteristic of $k$ is prime to $|G|$, making the tame ramification condition in Definition~\ref{def:gcover} automatic.

We say two $G$-covers $(X,f,\tau),(X',f',\tau')$ are isomorphic if there exists an isomorphism $h: X \ra X'$ over $k$ such that $f' \circ h = f$ and $h \circ \tau(g) = \tau'(g) \circ h$ for all $g \in G$.    

As in \cite{evw}, there is a slight technical annoyance to address arising from the fact that in algebraic geometry it is simpler to talk about covers of projective curves, while our topological story concerns $G$-covers of the disc, which is homeomorphic not to $\P^1(\C)$ but to $\A^1(\C)$.  By a {\em tame $n$-branched $G$-cover of $\A^1$ over $k$} (or, when context makes the parameters clear, a $G$-cover of $\A^1$) we shall mean a tame $G$-cover of $\P^1$ over $k$ as in Definition~\ref{def:gcover} with the property that $D \cap \A^1 \subset \P^1$ is a divisor of degree $n$.

We may also want to specify the local monodromy type of a $G$-cover.  Our covers are all tamely ramified by assumption; the local tame inertia group of $\A^1$ in the neighborhood of a puncture is canonically identified with $\hat{\Z}(1)$, so the local monodromy in a branched $G$-cover can be thought of as a conjugacy class of morphisms $\hat{\Z}(1) \ra G$.  We choose, once and for all, a generator for $\hat{\Z}(1)$; having done so, the local monodromy morphism is simply a conjugacy class in $G$, and so we have attached a multiset of $n$ conjugacy classes to the $n$-branched cover.  We say a multiset of conjugacy classes is {\em $\F_q$-rational} when it is closed under the natural ``Frobenius action" on $G$ which sends $g$ to $g^q$.  It is easy to see that when $X \ra \A^1/\F_q$ is a $G$-cover, its local monodromy must be $\F_q$-rational.

With this in mind, we say a subset of $G$ is $\F_q$-rational if it is closed under the permutation $g \mapsto g^q$ (a permutation because $q$ is always prime to $|G|$), and we suppose from now on that $c$ is a subset of $G$ which is not only closed under conjugacy but is $\F_q$-rational as well.  (In \cite{evw} we imposed a slightly stronger condition, that $c$ was closed under the full action of $\Zhat^\times$ rather than that of the subgroup $q^{\Zhat}$; in retrospect, the weaker condition of $\F_q$-rationality would have sufficed in \cite{evw} as well.)

As mentioned above, if $L/\F_q(t)$ is a Galois $G$-extension of function fields, then the unique smooth proper curve $X/\F_q$ with function field $L$ acquires a natural map $f$ to $\P^1$; if $U$ is the preimage $f^{-1}(\A^1)$, then $f: U \ra \A^1$ carries the structure of a $G$-cover of $\A^1/\F_q$, and this association is a bijection between field extensions and $G$-covers.

It would thus be convenient if there were a moduli scheme $\Hn_{G,n}^c$ such that the set of $k$-points $\Hn_{G,n}^c(S)$ were naturally in bijection with the set of $G$-covers over $k$ with $n$ branch points and local monodromy of type $c$.  If that were the case we could recast Malle's conjecture as a question about counting $\F_q$-rational points on $\Hn_{G,n}^c$.
  
Unfortunately, matters are not quite so simple.  The functor sending $S$ to the category of $G$-covers of $\A^1$ over $S$ is not representable by a scheme in general, but only by an algebraic stack.  The problem is that $G$-covers may have nontrivial automorphisms.  An automorphism of $(X,f,\tau)$ is an isomorphism $h: X \ra X$ such that $h \circ f = f$; in other words, it is an element of $\Aut(f)$, and thus, by definition of $G$-cover, an element $g \in G$.  Moreover, the requirement that $h$ be compatible with $\tau$ means that $h$ has to commute with every element of $G$; that is, $h$ is an element of the center $Z_G$.

When $G$ has trivial center, this problem is absent, and indeed it turns out there is a moduli scheme for $G$-covers.  We explain in \cite[\S 7.4, 7.7-7.8]{evw} how, given this hypothesis, we may use bounds on cohomology arising from topology to control the number of $\F_q$-rational $G$-covers of $\A^1$.  In that paper, our primary application concerned the case where $G$ was a generalized dihedral group, so not much was lost by assuming $G$ was center-free.  In the present paper, more generality is desired, so we will address this issue.  While there is no moduli scheme for $G$-covers when $G$ has nontrivial center, one does have the next best thing, a {\em coarse moduli scheme} over $\Z[1/|G|]$, which is what we denote $\Hn_{G,n}^c$;  this is defined by Romagny and Wewers in \cite[\S 4]{romagnywewers}.  \footnote{Romagny and Wewers don't discuss the specification of local monodromy, but $\Hn_{G,n}^c$ is an open and closed subcheme of their coarse moduli scheme, as explained in the paragraph before Lemma 7.4 of \cite{evw}.  The condition that $c$ is $\F_q$-rational guarantees that $\Hn_{G,n}^c$ is defined over $\F_q$.  See also \cite[\S 7.3]{evw} for discussion of how $\Hn_{G,n}^c$ is obtained from the moduli spaces of covers of $\P^1$ produced in \cite{romagnywewers}.}

\begin{prop}  Let $G$ be a finite group, $\F_q$ a finite field with characteristic prime to $|G|$, and $c$ an $\F_q$-rational conjugacy-invariant subset of $G$.  The number of $G$-covers of $\A^1$ over $\F_q$ with $n$ branch points and local monodromy of type $c$ is  
\beq |Z_G| |\Hn_{G,n}^c(\F_q)|. \eeq
\label{pr:stackycount}
\end{prop}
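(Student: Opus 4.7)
The plan is to exploit the fact, established in \cite{romagnywewers}, that the moduli problem of $G$-covers is represented by an algebraic stack $\mathcal{H}_{G,n}^c$ whose coarse moduli space is $\Hn_{G,n}^c$, and then to count isomorphism classes of $G$-covers over $\F_q$ fiber by fiber over the points of the coarse space.

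First I would record, as the paragraph preceding the statement already observes, that any automorphism of a $G$-cover $(X,f,\tau)$ is an element of $\Aut(f)\cong G$ that commutes with the entire image of $\tau$, and hence lies in the center $Z_G$. Since such an automorphism is manifestly defined over the base, the inertia of $\mathcal{H}_{G,n}^c$ at every object is canonically the constant étale group scheme $Z_G$. Consequently the structure map $\mathcal{H}_{G,n}^c \to \Hn_{G,n}^c$ is a gerbe banded by the constant abelian group scheme $Z_G$.

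Next I would analyze the fiber of the map from isomorphism classes of $G$-covers over $\F_q$ down to $\Hn_{G,n}^c(\F_q)$ above each point $m$. First, an $\F_q$-form of the $\Fqbar$-cover represented by $m$ exists because the obstruction lives in $H^2(\Gal(\Fqbar/\F_q),Z_G)$, which vanishes since the absolute Galois group of a finite field has cohomological dimension one. Having fixed one such form, the set of all $\F_q$-forms up to isomorphism is a torsor under $H^1(\Gal(\Fqbar/\F_q),Z_G)$. Because automorphisms are defined over the base of any cover, $Z_G$ carries the trivial Galois action, so this $H^1$ is $\Hom_{\mathrm{cts}}(\Zhat,Z_G)\cong Z_G$, of cardinality $|Z_G|$.

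Summing over the $|\Hn_{G,n}^c(\F_q)|$ points $m$ yields the claimed count $|Z_G|\,|\Hn_{G,n}^c(\F_q)|$. The main subtlety is the careful bookkeeping needed to pass from the (groupoid of) $\F_q$-points of the stack to the set of isomorphism classes of $G$-covers, and to apply the cohomological vanishing in the right place; all of the algebro-geometric infrastructure, in particular the construction and properties of the coarse moduli space, has been carried out in \cite{romagnywewers}, so the remaining work is a standard Galois-cohomological descent argument for twisted forms over finite fields.
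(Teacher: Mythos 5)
Your argument is correct in outline, and it is in fact precisely the proof that the authors themselves sketch in the Remark immediately following the proposition and describe as the ``right'' proof: view the moduli problem as a gerbe banded by $Z_G$ over the coarse space, kill the lifting obstruction in $H^2(\Gal(\Fqbar/\F_q),Z_G)$ using that $\hat{\Z}$ has cohomological dimension one, and count forms via $H^1(\Gal(\Fqbar/\F_q),Z_G)\cong Z_G$. However, this is \emph{not} the proof the paper actually gives, and the reason is a foundational one that your write-up glosses over: what Romagny--Wewers construct in the published reference is the \emph{coarse} moduli scheme, whereas the Deligne--Mumford stack of $G$-covers (and hence the gerbe structure you invoke) is only constructed in unpublished work of Wewers. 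Your first paragraph attributes the stack to \cite{romagnywewers}, which is not quite available; the authors explicitly avoid relying on it. The paper's actual proof replaces the stack by hand: it identifies $G$-covers of $\A^1$ over $\F_q$ with branch locus $D$ with conjugacy classes of surjections $\phi:\pi_1^{\et}(U)\ra G$ ($U=\A^1-D$) that remain surjective on $\pi_1^{\et}(\bar{U})$, identifies $\F_q$-points of $\Hn_{G,n}^c$ with Galois-stable conjugacy classes of surjections from $\pi_1^{\et}(\bar{U})$, shows surjectivity of the restriction map by extending over a Frobenius lift, and computes the fiber by an explicit nonabelian cocycle calculation showing that the difference $\zeta(\gamma)=\phi'(\gamma)\phi(\gamma^{-1})$ of two lifts is forced into $Z_G$ and factors through $\Gal(\Fqbar/\F_q)$. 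This is exactly the inflation--restriction argument underlying your gerbe computation, carried out concretely. What your approach buys is brevity and conceptual clarity; what the paper's approach buys is independence from the unpublished stack-theoretic infrastructure. If you want your proof to stand on its own, you should either cite a source that actually constructs the stack and verifies it is a $Z_G$-gerbe over $\Hn_{G,n}^c$, or unwind the gerbe language into the fundamental-group argument as the paper does.
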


\begin{rem}  The ``right" proof of Proposition~\ref{pr:stackycount} would be to say: there is a Deligne-Mumford moduli stack $\XX$ for $G$-covers, every point of which has inertia group $Z_G$; for whatever field $k$ of characteristic not dividing $|G|$, the $k$-rational points of $\XX$ are the $G$-covers over $k$.  For any field $k$, the obstruction to a $k$-rational point of the coarse moduli space $\Hn_{G,n}^c$ lifting to a $k$-rational point of $\XX$ lies in $H^2(k,Z_G)$; when $k$ is finite, this cohomology group is trivial, so every $k$-rational point of the coarse moduli space actually parametrizes a $k$-rational $G$-cover.  What's more, the set of $k$-rational points of $\XX$ over a $k$-rational point of $\Hn_{G,n}^c$ is a torsor for the cohomology group $H^1(k,Z_G)$; when $k$ is a finite field, this cohomology group has size $|Z_G|$.  This is essentially the argument of \cite[Cor 2.3.4]{behrend:thesis} (ascribed by Behrend to Serre.)

The reason for not relying on this argument in the present case is merely because the construction of the Deligne-Mumford moduli stack of $G$-covers is carried out in unpublished work of Wewers~\cite{wewers} and because we don't want to give the impression that the results of the present paper depend in a critical way on theorems about stacks.  It is certainly true, however, that the stack point of view is the most illuminating way to think about the technical difficulties arising from the case where $G$ has nontrivial center.
\end{rem}

We now return to the proof of Proposition~\ref{pr:stackycount}.

\begin{proof}

We will need the following properties of the coarse moduli scheme $\Hn_{G,n}^c$:

\begin{itemize}
\item If $(X,f,\tau)$ is a branched $G$-cover of $\A^1$ over an algebraically closed field $k$, parametrized by a point $x$ of $\Hn_{G,n}^c(k)$, and $\sigma: k \ra k$ is a field automorphism, then the branched $G$-cover $(X,f,\tau)^\sigma$ is parametrized by $x^\sigma$. (\cite[4.11 i)]{romagnywewers})
\item There is a finite \'{e}tale morphism $\pi$ from $\Hn_{G,n}^c$ to the configuration space $\UU_n$ of $n$ points on $\A^1$, which sends $(X,f,\tau)$ to the branch locus of $f$. 
\end{itemize}

These facts are essentially drawn from \cite[4.11]{romagnywewers}, where the analogous facts are proved for their moduli space $\mathcal{H}_G^n$, which parametrizes branched $G$-covers of $\P^1$, rather than branched covers of $\A^1$.  We recall again \cite[\S 7.3]{evw}, which explains that $\Hn_{G,n}^c$ is a union of locally closed subschemes of $\mathcal{H}_G^n$ and $\mathcal{H}_G^{n+1}$.  In particular, the results of \cite[4.11]{romagnywewers} apply to $\Hn_{G,n}^c$ as well as to $\mathcal{H}_G^n$; the reader should have this in my mind when we refer to \cite{romagnywewers} again later in the argument.

We will now prove that the set of branched $G$-covers of $\A^1$ parametrized by a point $x \in \Hn_{G,n}^c(\F_q)$ has cardinality $|Z_G|$, which immediately implies Proposition~\ref{pr:stackycount}.

  Let $\Sigma$ be the set of isomorphism classes of $G$-covers of $\A^1$ over $\F_q$ with $n$ branch points and local monodromy of type $c$.  If $(X,f,\tau)$ is such a cover, then the base change $(\bar{X},\bar{f},\bar{\tau})$ of $(X,f,\tau)$ from $\F_q$ to $\Fqbar$ is a $G$-cover of $\A^1$ over $\Fqbar$.  By \cite[4.11 i)]{romagnywewers},  the cover $(\bar{X},\bar{f},\bar{\tau})$ is parametrized by a point of $\Hn_{G,n}^c(\Fqbar)$ which is fixed by the Galois action, and is thus a point of $\Hn_{G,n}^c(\F_q)$.  This provides a map from $\Sigma$ to $\Hn_{G,n}^c(\F_q)$; we will now show that all fibers of this map have size $|Z_G|$.

Let $(X,f,\tau)$ be an element of $\Sigma$, let $D \in \A^1/\F_q$ be its branch locus, and let $U$ be the complement of $D$ in $\A^1$.  Then the \'{e}tale fundamental group $\pi_1^{\et}(U)$ fits into an exact sequence
\beq
1 \ra \pi_1^{\et}(\bar{U})  \ra \pi_1^{\et}(U) \ra \Gal(\Fqbar/\F_q) \ra 1
\eeq
where $\bar{U}$ is the base change of $U$ to $\Fqbar$.  This sequence affords an outer action of $\Gal(\Fqbar/\F_q)$ on $\pi_1^{\et}(\bar{U})$.  Write $F$ for an element of $\pi_1^{\et}(U)$ projecting to Frobenius in $\Gal(\Fqbar/\F_q)$.

Write $\Sigma_U$ for the subset of $\Sigma$ consisting of covers whose branch locus is $D$.  Any $(X,f,\tau) \in \Sigma_U$ is \'{e}tale when restricted to $U$; indeed, $\Sigma_U$ may be identified with the set of \'{e}tale $G$-covers of $U$ with local monodromy of type $c$.  In other words, $\Sigma_U$ is in bijection with the set of conjugacy classes of surjective homomorphisms
\beq
\phi: \pi_1^{\et}(U) \ra G
\eeq
which send the specified tame inertia generators at each geometric point of $D$ (precisely: the generators specified by our choice of a generator of $\Zhat(1)$) to $c$, and which remain surjective after restriction to $\pi_1^{\et}(\bar{U})$.  (This last condition is imposed by the requirement that $X$ is geometrically connected.)

On the other hand, suppose $x$ is a point of $\Hn_{G,n}^c(\F_q)$.  The image of $x$ under the finite morphism $\pi$ from $\Hn_{G,n}^c$ to the configuration space $\UU_n$ of $n$ points on $\A^1$ (as in \cite[4.11 ii)]{romagnywewers}) parametrizes a configuration of $n$ points in $\A^1$; since $\UU_n$ is a fine moduli space, $\pi(x)$ corresponds to a divisor $D \in \A^1/\F_q$ -- or, what is the same, an open subscheme $U=\A^1-D$.  The $\Fqbar$ points of the corresponding fiber $\pi^{-1}(\pi(x))$ parametrize the set of \'{e}tale $G$-covers of $\bar{U}/\Fqbar$ with local monodromy of type $c$.  The $\F_q$-rational points of $\pi^{-1}(\pi(x))$ are just the points of $\pi^{-1}(\pi(x))$ which are fixed by the action of $\Gal(\Fqbar/\F_q)$; that is, they are those conjugacy classes of surjective homomorphisms
\beq
\pi_1^{\et}(\bar{U}) \ra G
\eeq
which are preserved under the outer action of $\Gal(\Fqbar/\F_q)$ on $\pi_1^{\et}(\bar{U})$.  

If $\phi: \pi_1^{\et}(U) \ra G$ is an element of $\Sigma_U$, the corresponding point in $\Hn_{G,n}^c(\F_q)$ is just the restriction $\phi | \pi_1^{\et}(\bar{U})$.  Having expressed the map from $\Sigma$ to $\Hn_{G,n}^c(\F_q)$ in group-theoretic terms, we proceed to the proof of the proposition.

First, we need to know that the map $\Sigma \ra \Hn_{G,n}^c(\F_q)$ is surjective.  Given a point $x$ of $\Hn_{G,n}^c(\F_q)$, let 
\beq
\psi: \pi_1^{\et}(\bar{U}) \ra G
\eeq
be the corresponding surjective homomorphism.  We know $\psi$ is preserved up to conjugacy by the outer action of  $\Gal(\Fqbar/\F_q)$; that is, there exists $\alpha \in  \pi_1^{\et}(\bar{U})$ such that
\beq
\psi(\gamma^F) = \psi(\alpha^{-1} \gamma \alpha)
\eeq
for all $\gamma \in  \pi_1^{\et}(\bar{U})$.  This means we can extend $\psi$ to a homomorphism $\phi: \pi_1^{\et}(U) \ra G$ by defining $\phi(F) = \psi(\alpha)$; thus, there is a cover in $\Sigma$ mapping to $x$.

In other words, we know the fiber $\Sigma_x$ of $\Sigma$ over $x$ is nonempty.  The fiber can be computed immediately from the inflation-restriction sequence in nonabelian group cohomology, but for the reader's convenience we will write out the argument directly.

Let $\phi, \phi':  \pi_1^{\et}(U) \ra G$ be two elements of $\Sigma_x$.  Then $\phi$ and $\phi'$ become conjugate upon restriction to $\pi_1^{\et}(\bar{U})$.  Because $\phi$ and $\phi'$ are only defined up to conjugacy, we may assume $\phi$ and $\phi'$ actually agree after restriction to $\pi_1^{\et}(\bar{U})$.  Now the map
\beq
\zeta: \gamma \mapsto \phi'(\gamma)\phi(\gamma^{-1})
\eeq
is a cocycle in $H^1(\pi_1^{\et}(U),G)$, where the action of $\gamma$ on $G$ is conjugation by $\phi(\gamma^{-1})$.  

We note that $\zeta(n) = 1$ for all $n \in \pi_1^{\et}(\bar{U})$.  By the cocycle relation, we have on the one hand 
\beq
\zeta(n \gamma) = \zeta(\gamma)^{\phi(n)}
\eeq
and on the other hand
\beq
\zeta(n \gamma) = \zeta(\gamma n^{\gamma}) = \zeta(\gamma).
\eeq
It follows that $\zeta(\gamma)$ is invariant under conjugation by $\phi(n)$ for all $n \in \pi_1^{\et}(\bar{U})$.  But $\phi$ is surjective on restriction to $\pi_1^{\et}(\bar{U})$, so $\zeta(\gamma)$ must actually lie in the center $Z_G$.  This implies, in turn, that $\zeta$ is actually a homomorphism from $\pi_1^{\et}(U)$ to $Z_G$ which factors through the quotient  $\Gal(\Fqbar/\F_q)$.

To sum up:  every element of $\phi'$ of $\Sigma_x$ can be written as $\phi \zeta$ where $\zeta$ is a homomorphism from $\pi_1^{\et}(U)$ to $Z_G$ factoring through $\Gal(\Fqbar/\F_q)$.  Conversely, any such $\phi \zeta$ is an element of $\Sigma_x$.  

Suppose $\phi \zeta$ and $\phi \zeta'$ agree in $\Sigma_x$, which is to say they are conjugate by some element $g \in G$.  Both $\phi \zeta$ and $\phi \zeta'$ restrict to the same surjective homomorphism $\psi:  \pi_1^{\et}(\bar{U}) \ra G$; for $g$ to conjugate $\psi$ to itself, $g$ must lie in $Z_G$, and this implies that $\zeta = \zeta'$.

We have now shown that $\Sigma_x$ is in bijection with $\Hom(\Gal(\Fqbar/\F_q),Z_G)$, an abelian group isomorphic to $Z_G$.  This proves the claimed result.
\end{proof}

\begin{prop} The ranks of $H^{2n-j}_{c, \et}(\Hn_{G, n}^c/\Fqbar, \Q_\ell)$ and $H_{j}(\CHur_{G,n}^c/G, \Q_\ell)$ are equal. \label{same_rank_prop}
\end{prop}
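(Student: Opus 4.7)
The plan is to translate the claim through three standard operations: Poincar\'{e} duality, smooth base change between \'{e}tale cohomologies in different characteristics, and Artin's comparison theorem. First, $\Hn_{G,n}^c$ is smooth of relative dimension $n$ over $\Spec\Z[1/\#G]$: by the proof of Proposition~\ref{pr:stackycount}, the branch-locus morphism $\pi \colon \Hn_{G,n}^c \to \UU_n$ is finite \'{e}tale, and $\UU_n$ is itself smooth of dimension $n$. Poincar\'{e} duality in $\ell$-adic \'{e}tale cohomology then supplies a perfect pairing
\[
H^{2n-j}_{c,\et}(\Hn_{G,n}^c/\Fqbar, \Q_\ell) \otimes H^{j}_{\et}(\Hn_{G,n}^c/\Fqbar, \Q_\ell) \longrightarrow \Q_\ell(-n),
\]
so the two factors have equal $\Q_\ell$-rank.

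Next, one moves the base from $\Fqbar$ to $\C$. The integral model over $\Spec \Z[1/\#G]$ admits a smooth compactification $\overline{\Hn}_{G,n}^c$ whose boundary is a normal crossings divisor (obtained, for instance, via Harris--Mumford admissible covers). Smooth and proper base change applied to $\overline{\Hn}_{G,n}^c$, and inductively to each closed stratum of its boundary, can be assembled through the long exact sequences of pairs to yield
\[
\rk_{\Q_\ell} H^{j}_{\et}(\Hn_{G,n}^c/\Fqbar, \Q_\ell) \;=\; \rk_{\Q_\ell} H^{j}_{\et}(\Hn_{G,n}^c/\C, \Q_\ell).
\]

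Finally, Artin's comparison theorem identifies the right-hand side with singular cohomology $H^j(\Hn_{G,n}^c(\C), \Q_\ell)$. The complex analytic space $\Hn_{G,n}^c(\C)$ is precisely the quotient $\CHur_{G,n}^c / G$: both parametrize isomorphism classes of connected branched $G$-covers of a disk with $n$ branch points and monodromy type $c$, with the $G$-action on the right cancelling the trivialization over the basepoint that is built into $\CHur_{G,n}^c$. Because $\#G$ is invertible in $\Q_\ell$, the usual transfer argument identifies $H^*(\CHur_{G,n}^c/G, \Q_\ell)$ with the $G$-invariants of $H^*(\CHur_{G,n}^c, \Q_\ell)$, and the universal coefficient theorem then matches the rank of $H^j(\CHur_{G,n}^c/G,\Q_\ell)$ with that of $H_j(\CHur_{G,n}^c/G,\Q_\ell)$, completing the chain of rank equalities.

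The main obstacle is the second step: ensuring that $\ell$-adic ranks do not jump between the generic and special fibers of $\Hn_{G,n}^c \to \Spec \Z[1/\#G]$ despite the non-properness of $\Hn_{G,n}^c$. The cleanest path is to exhibit a smooth integral compactification whose boundary strata are themselves smooth over $\Spec\Z[1/\#G]$; we would appeal either to the compactification and boundary analysis carried out in the analogous setting of \cite{evw}, or to the Romagny--Wewers compactified moduli space $\mathcal{H}_G^n$ together with a check that its boundary stratification behaves well over the integral base.
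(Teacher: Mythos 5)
Your overall chain --- Poincar\'{e} duality, comparison of $\ell$-adic cohomology between $\Fqbar$ and $\C$, Artin comparison, and the identification $\Hn_{G,n}^c(\C) \cong \CHur_{G,n}^c/G$ --- is the same as the paper's. The difference, and the place where your argument has a genuine gap, is the middle step. You propose to control specialization of \'{e}tale cohomology by producing a smooth integral compactification of $\Hn_{G,n}^c$ with normal crossings boundary via admissible covers and running smooth--proper base change stratum by stratum. This is substantially harder than what is needed and is not actually carried out: the Harris--Mumford compactification is not smooth as a scheme (its coarse space is singular), and verifying that a resolution exists over $\Spec \Z[1/\#G]$ with all boundary strata smooth over the base is a serious undertaking that your sketch defers to citations you have not checked. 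The paper avoids this entirely by exploiting the fact (already used in your first step) that $\pi: \Hn_{G,n}^c \to \UU_n$ is finite \'{e}tale: a finite \'{e}tale cover of configuration space is determined by the tame braid-group action on its fiber, which does not change between characteristic $p$ and characteristic $0$, so the comparison $H^{j}_{\et}(\Hn_{G,n}^c/\Fqbar, \Q_\ell) \cong H^{j}_{\et}(\Hn_{G,n}^c/\C, \Q_\ell)$ reduces to the corresponding well-understood statement for $\UU_n$ itself (this is Proposition 7.7 of \cite{evw}). You should route the base-change step through the cover of $\UU_n$ rather than through a compactification of the total space.

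Two smaller points. First, the homeomorphism $\Hn_{G,n}^c(\C) \cong \CHur_{G,n}^c/G$ deserves more than the one sentence you give it: the clean argument identifies both sides as finite covers of $\Conf_n(\C)$ and checks that they correspond to the same finite $B_n$-set, namely the Nielsen classes $\Ni(G,n,c) = c^{\times n}/G^{ad}$; one must also observe that $Z_G$ acts trivially on $\CHur_{G,n}^c$, so the topological quotient by $G$ coincides with the quotient by $G/Z_G$, which is what the coarse moduli space sees. Second, the transfer-to-$G$-invariants step at the end is unnecessary: the universal coefficient theorem over $\Q_\ell$ already gives $\rk H^j = \rk H_j$ for the quotient space directly.
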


\begin{proof}    Until now we have been working entirely in finite characteristic, but we now need to recall that the Hurwitz spaces $\Hn_{G,n}$ defined in \cite{romagnywewers} are defined over $\Spec \Z[1/|G|]$.  Our $\Hn_{G,n}^c$, obtained from $\Hn_{G,n}$ by imposing constraints on local monodromy, is a closed and open subscheme of $\Hn_{G,n}$, defined over a cyclotomic extension of $\Z$ having a residue field contained in $\F_q$.  We bring this up only in order to have access to the manifold $\Hn_{G,n}^c(\C)$, which, as we now explain, is homeomorphic to $\CHur_{G,n}^c/G$.  This is essentially the same as Lemma 7.4 of \cite{evw}, where this statement is proved in the case that $G$ is centerfree.  In turn, that lemma is essentially an explication of \cite[4.11 (iii)]{romagnywewers}.  It turns out that the triviality of $Z_G$ is not really used in \cite[Lemma 7.4]{evw}, whose proof we recapitulate below.

The complex manifold $\CHur_{G,n}^c/G$ is a finite-sheeted cover of the configuration space $\Conf_n(\C)$; any such cover corresponds to a finite set with an action of the $n$-strand braid group.  The finite set can be identified with the set $\Ni(G,n,c) = c^{\times n}/G^{ad}$ of (Nielsen) equivalence classes of $n$-tuples $(g_1, \ldots, g_n)$ where each $g_i$ lies in $c$ and the equivalence relation is simultaneous conjugacy by $G$ on all entries.  We note that $Z_G$ acts as the identity on $\CHur_{G,n}^c$; our quotient here is the usual quotient of topological spaces and not a homotopy quotient, so the quotient by $G$ is just the same as the quotient by $G/Z_G$. 

On the other hand, the complex manifold $\Hn_{G,n}^c(\C)$ is a finite algebraic cover of the complex configuration space $U_n(\C)$.  Such a cover is also described by a finite set with a braid group action; in this case, the finite set is the set of isomorphism classes of $G$-covers of $\A^1$ over $\C$ with a fixed degree-$n$ branch divisor $D$ and local monodromy of type $c$, by \cite[4.11]{romagnywewers}.  Such an isomorphism class is equivalent to a conjugacy class of surjective homomorphisms from $\pi_1^{\et}(\A^1 - D)$ to $G$ sending the puncture classes to $c$, and the set of surjective homomorphisms is again $\Ni(G,n,c)$ with the same braid group action.  This proves the existence of the desired homeomorphism.

The rest of the argument proceeds exactly as in \cite{evw}.  Proposition 7.7 of \cite{evw} shows that $H^{j}_{\et}(\Hn_{G, n}^c/\F_q, \Q_\ell)$ is isomorphic to $H^{j}_{\et}(\Hn_{G, n}^c/\C, \Q_\ell)$; all that's needed is that $\Hn_{G,n}^c$ is an \'{e}tale cover of configuration space, which is true without the assumption that $G$ is centerfree (\cite[4.11]{romagnywewers}).  Comparison of \'{e}tale and analytic cohomology for the complex variety  $\Hn_{G, n}^c/\C$ and Poincar\'{e} duality complete the proof.

\end{proof}

\begin{thm} \label{weak_counting_thm}

Let $p(n) = \max_{m \leq n} \dim_k (R_m)$ as in Corollary~\ref{we_win_cor}.  For every $C > 1$, there exists a constant $Q = Q(G, c,C)$ such that, for all $q>Q$ and prime to $|G|$, and and all positive integers $n$, we have
\beq
|\Hn_{G, n}^c(\F_q)| \leq C p(n) q^n.
\eeq

\label{th:hurcount}

\end{thm}

\begin{proof}

Proposition \ref{same_rank_prop} gives an isomorphism of $\Q_\ell$ vector spaces 
$$H^{2n-j}_{c, \et}(\Hn_{G, n}^c/\Fqbar, \Q_\ell) \cong H_{j}(\CHur_{G,n}^c/G, \Q_\ell).$$
Notice that $\CHur_{G,n}^c/G$ is a finite quotient of a union of components of $\Hur_{G,n}^c$.  Thus the dimension of the homology of the former is bounded by that of the latter.   This, in turn, can be expressed in terms of the dimension of an Ext group:
$$\dim_k H_j \Hur_{G, n}^c = \dim_k \Ext_{\mA(V^*_\epsilon)}^{n-j, n}(k, k)$$ 

We have shown in Corollary \ref{we_win_cor} that there exists a constant $Y = Y(G,c)$ such that 
\beq
\dim \Ext_{\mA(V^*_\epsilon)}^{n-j,n} \leq p(n)Y^j.
\eeq
From those bounds on Betti numbers, we obtain a bound on the number of points in $\Hn_{G, n}^c(\F_q)$ using the Grothendieck-Lefschetz fixed point theorem and Deligne's bounds on the eigenvalues of the geometric Frobenius:
\begin{eqnarray*}
\frac{\# \Hn_{G, n}^c(\F_q)}{ q^n} & = & q^{-n} \sum_{j=0}^{2n} (-1)^j \tr(\Frob | H^{2n-j}_{c, \et}(\Hn_{G, n}^c/\Fqbar, \Q_\ell)) \\
& \leq &  \sum_{j=0}^{2n}   q^{-j/2} \rk H^{2n-j}_{c, \et}(\Hn_{G, n}^c/\Fqbar, \Q_\ell) \\
& \leq &  \sum_{j=0}^{2n}  q^{-j/2} p(n) Y^j 
\end{eqnarray*}
Then, for any $q > Y^2$ we have
\beq
\frac{\# \Hn_{G, n}^c(\F_q)}{ q^n} \leq \sum_{j=0}^{2n}  q^{-j/2} p(n) Y^j \leq p(n) \sum_{j=0}^{\infty}  q^{-j/2} Y^j \leq C p(n)
\eeq
as long as we choose $q$ large enough that $(1-q^{1/2} Y)^{-1} < C$.  This proves the claim.

\end{proof}

We now show how to derive the upper bound in the weak Malle conjecture from Theorem~\ref{th:hurcount}.  In fact, we prove something slightly more general, addressing ``generalized discriminants" as considered in \cite{ev}.  Let $G$ be a finite group, $c$ an $\F_q$-rational conjugacy-invariant subset of $G$, and $w: c \ra \Z_{>0}$ a conjugacy-invariant function on $c$.

 If $\phi: Y \ra \P^1/\F_q$ is a branched $G$-cover with all local monodromy of type $c$, we define the {\em $w$-discriminant} of $\phi$ to be the product over all branch points $y_1, \ldots, y_n$ in $\A^1$ of $q^{w(g_{y_i})}$, where $g_{y_i}$ is the local monodromy at $y_i$ (that is, the image in $G$ of the generator of $\hat{\Z}(1)$ we chose earlier and have left fixed throughout.)  It may seem a little artificial, not to mention non-canonical, that this discriminant receives contributions only from ramification points in $\A^1$, and ignores any ramification at $\infty$.  This choice is not crucial but it is convenient; first of all, it means that the cover $Y$ is represented by a point of $\Hn_{G, n}^c$, which parametrizes covers with $n$ branch points in $\A^1$; second, it works well in the analogy with number fields, where our discriminant does not include any contribution from the archimedean place or places.

When $G$ is a subgroup of $S_m$, there is a natural choice for $w$; send a conjugacy class $c$ (thought of as a permutation $\pi$ on $n$ letters) to its {\em index}, the difference between $m$ and the number of orbits of $\pi$.  In this case, the $w$-discriminant of a cover is simply the usual discriminant of the degree-$m$ cover of $\A^1$ associated to $\phi$ via the embedding $G \ra S_m$.

\begin{thm}
Let $G$ be a finite group, $c$ a conjugacy-invariant subset of $G \bs e$, and $w$ a conjugacy-invariant function on $c$.  Let  $N_{G}^{c,w}(\F_q(t); X)$ be the number of $G$-extensions of $\F_q(t)$ with all local monodromy contained in $c$ and $w$-discriminant at most $X = q^r$.  Suppose further that $q$ is prime to $|G|$ and is at least $Q(G,c)$, the constant appearing in Theorem~\ref{weak_counting_thm}.  Then 
\beq
N_{G}^{c,w}(\F_q(t); X) = O(X^{a(w,G,c)} (\log X)^{d-1})
\eeq
where $a(w,G,c)$ is $(\min_{g \in c} w(g))^{-1}$, and where $d$ is the Gelfand-Kirillov dimension of $R$.  
\label{th:mainmalle}
\end{thm}

\begin{proof}

Note first that we haven't imposed the condition that $c$ be $\F_q$-rational.  The reason is not very deep. Since any $G$-cover of $\A^1/\F_q$ has $\F_q$-invariant monodromy, a $G$-cover of $\A^1/\F_q$ with monodromy contained in $c$ in fact has monodromy contained in an $\F_q$-invariant subset of $c$.  So if $c$ is not $\F_q$-rational, we can simply run the argument below for each $\F_q$-invariant subset of $c$ separately.  Thus, in the proof, we can and do assume that $c$ is $\F_q$-rational. 

Let $c_1, \ldots, c_m$ be the conjugacy classes comprising $c$.  Then the $w$-discriminant of a $G$-cover with $m_i$ branch points of type $c_i$ is $q^{\sum_i m_iw (c_i)}.$  Thus a $G$-cover with local monodromy of type $c$ and $w$-discriminant $q^r$ has at most $r a(w,G,c)$ branch points.  Theorem~\ref{th:hurcount} now tells us that, for all $q > Q(G,c)$, we have
\beq
N_{G}^{c,w}(\F_q(t); X)  \leq  \sum_{n=1}^{r a(w,G,c)}  |\Hn_{G, n}^c(\F_q)| \leq \sum_{n=1}^{r a(w,G,c)} C p(n) q^n.
\eeq

As noted in Remark~\ref{rem:pnpoly}, the function $p(n)$ is a polynomial of degree $d-1$ for $n$ large enough, and in particular is bounded above by $B n^{d-1}$ for some $B$, and thus by $B r^{d-1} a(w,G,c)^{d-1}$ for all $n$ in the range $1 \leq n \leq r a(w,G,c)$.  We then have
\beq
N_{G}^{c,w}(\F_q(t); X) \leq \sum_{n=1}^{r a(w,G,c)} C p(n) q^n \leq BC r^{d-1} a(w,G,c)^{d-1} (1-1/q)^{-1} q^{r a(w,G,c)}.
\eeq
Since $\log X = r \log q$, this upper bound is bounded above by a constant multiple of
\begin{equation}
X^{a(w,G,c)} (\log X)^{d-1}
\label{finalbound}
\end{equation}
as claimed.

It is now now at last time to pay the piper concerning the issue of geometrically connected versus connected.  Fortunately, the fee is modest.  Recall that our ultimate goal is to count extensions of the field $\F_q(t)$ with Galois group $G$.  Such an extension is {\em almost} the same thing as a branched $G$-cover of $\P^1/\F_q$, except for our requirement that a branched $G$-cover be geometrically connected.  What about the $G$-extensions $K/\F_q(t)$ corresponding to covers $Y \ra \P^1$ where $Y$ is connected but not geometrically connected?  In this scenario, there's a subgroup $G_0 < G$ with $G/G_0$ cyclic of order $m$, and $K$ contains $\F_{q^m}$.  So $K/\F_{q^m}(t)$ is a $G$-extension whose discriminant is equal to that of $K/\F_q(t)$, and which is {\em regular} -- that is, it contains no extension of the constant subfield.  The above argument provides an upper bound for the number of regular $G$-extensions of discriminant at most $X$; now apply this with $G_0$ in place of $G$, for each of the finitely many possibilities for $G_0$, and sum.  We note that the exponent $a(w,G_0,c \cap G_0)$ cannot exceed $a(w,G,c)$.  Thus, the bound \eqref{finalbound} holds even when non-regular $G$-extensions are included.  Indeed, one could include not only the field extensions, but the etale $G$-extensions, corresponding to covers of curves which are not even connected, and \eqref{finalbound} would still hold.

\end{proof}

The upper bound in the Weak Malle conjecture in its usual formulation when $G$ is a subgroup of $S_d$ follows immediately, by taking $w$ to be the index, as above.

\begin{cor}
Let $G$ be a transitive subgroup of $S_m$ and let $c$ be a conjugacy-invariant subset of $G \bs e$. Write $N_{G}^c(\F_q(t); X)$ be the number of degree $m$ extensions of $\F_q(t)$ whose Galois closure has Galois group $G$, whose local monodromy all lies in $c$, and whose discriminant is at most $X = q^r$.  Suppose further that $q$ is prime to $|G|$ and is at least $Q(G,c)$, the constant appearing in Theorem~\ref{weak_counting_thm}.  Then 
\beq
N_{G}^c(\F_q(t); X) = O(X^{a(G,c)} (\log X)^{d-1})
\eeq
where $a(G,c) = (\min_{g \in c} \ind(g))^{-1}$ is the constant predicted by the weak Malle conjecture, and where $d$ is the Gelfand-Kirillov dimension of $R$.   \
\label{cor:mainmalle}
\end{cor}

The full Malle conjecture would require that the power of $\log X$ appearing here be the number of conjugacy classes in $c$ realizing the minimal index (or, in the generalized discriminant setting, the minimal value of $w$.)  It seems to us that the methods of the present paper would need substantial refinement in order to get the power of $\log X$ correct in general, as we now explain.  Consider, for example, the case where $G = S_4$ and $c$ is the class of transpositions.  Then $c$ consists of only one conjugacy class, but if $H$ is the subgroup of $G$ generated by $(12)$ and $(34)$, then $c \cap H$ splits into two $H$-conjugacy classes, so the Gelfand-Kirillov dimension of $R$ in this case is not $2$ but $1$ and the upper bound provided by the theorem for quartic extensions of discriminant at most $X$ is on order $X \log X$, while the Malle conjecture prediction (and indeed, in this case, the proven fact) is that the number of $(G,c)$-extensions of $\F_q(t)$ of discriminant at most $X$ is on order $X$.  The problem is that our bound for the number of points on the Hurwitz space parametrizing geometrically connected $G$-covers comes from a bound for the number of points on the moduli space parametrizing {\em all} $G$-covers, connected or not.  In the case under discussion, this means that our bound counts not only quartic extensions with Galois group $S_4$, but also those with Galois group $H$; and in fact, the number of biquadratic extensions with discriminant at most $X$ actually {\em is} on order $X \log X$, and so dominates the term that the Malle conjecture actually asks us to count.  

However, this is in some sense the {\em only} obstacle to getting lower bounds of the same order of growth as our upper bounds, as the following corollary shows.

\begin{cor}  Let $G$ be a finite group, $c$ a conjugacy-invariant subset of $G \bs e$, and $w$ a conjugacy-invariant function on $c$.  Write $d$ for the number of conjugacy classes contained in $c$ which realize the minimal value of $w$ on $c$.  Suppose that, for any proper subgroup $H \subset G$, the number of $H$-conjugacy classes in $c \cap H$ is at most $d$.  Suppose that $q$ is prime to $|G|$ and greater than $Q(G,c)$ and $q-1$ is sufficiently divisible.  Then there exist constants $C_1, C_2$ such that
\beq
C_1 (X^{a(w,G,c)} (\log X)^{d-1}) < N_{G}^{c,w}(\F_q(t); X) < C_2 (X^{a(w,G,c)} (\log X)^{d-1})
\eeq
\label{co:lowerbounds}
\end{cor}

\begin{proof}
We note first that the condition of the corollary implies that $w$ is actually {\em constant} on $c$, since otherwise the number of conjugacy classes making up $c \cap H$ would be greater than $d$ when $H=G$.  So $a(w,G,c) = 1/w$.  We hope that in later work we will be able to relax this condition, e.g. by only needing control over the number of $w$-minimal conjugacy classes in $c \cap H$.  We also note that the condition of the corollary is related to the {\em splitting number} introduced by Seguin in \cite[Def 1.1]{seguin}.

We also note that, when $G$ is endowed with a permutation representation $G \inj S_m$ and $w$ is index, the Malle prediction would be that the power of log in $N_{G}^c(\F_q(t); X)$ is one less than the number of orbits on the $d$ conjugacy classes under the natural action of Frobenius already described; for $q-1$ divisible enough (say, divisible by $|G|$) this action is trivial, so the expected power of log is $d-1$.  In other words, when Corollary~\ref{co:lowerbounds} applies, it shows that the counting function for $G$-extensions is bounded above and below by constant multiples of the Malle prediction.

The notation $d$ for the number of conjugacy classes making up $c$ is justified by the fact that, given the hypothesis on the intersections $c \cap H$, the Gelfand-Kirillov dimension of $R$ is also $d$.  Thus, the upper bound to be proved is just a restatement of Theorem~\ref{th:mainmalle}, and the point of this corollary is to establish the lower bound.  We turn to this now.

First of all, since $w$ is constant, the number of $G$-extensions whose $w$-discriminant is {\em exactly} $q^{wn}$ is $|Z_G| |\Hn_{G,n}^c(\F_q)|$ by Proposition~\ref{pr:stackycount}.  This number is a lower bound for $N_G^{c,w}(\F_q(t); q^{wn})$.  So the lower bound required for the Corollary can be expressed as
\beq
C' q^n n^{d-1} < |\Hn_{G,n}^c(\F_q)|.
\eeq
for some constant $C'$.

Recall from the proof of Theorem~\ref{weak_counting_thm} that we have
\beq
\frac{\# \Hn_{G, n}^c(\F_q)}{ q^n}  =  q^{-n} \sum_{j=0}^{2n} (-1)^j \tr(\Frob | H^{2n-j}_{c, \et}(\Hn_{G, n}^c/\Fqbar, \Q_\ell)) 
\eeq
which we separate into
\begin{equation}
\label{eq:split}
q^{-n}(\tr(\Frob | H^{2n}_{c, \et}(\Hn_{G, n}^c/\Fqbar, \Q_\ell)) + \sum_{j=1}^{2n} (-1)^j \tr(\Frob | H^{2n-j}_{c, \et}(\Hn_{G, n}^c/\Fqbar, \Q_\ell)))
\end{equation}

As in the proof of Theorem~\ref{weak_counting_thm}, the second summand is bounded above by 
\beq
p(n) \sum_{j=1}^{\infty}  q^{-j/2} Y(G,c)^j
\eeq
which now (because we are starting at $j=1$ instead of $j=0$) is bounded above for all $q > Q(G,c)$ by
\beq
C q^{-1/2} p(n)
\eeq
for some constant $C = C(G,c)$.  In particular, since $p(n)$ is a polynomial of degree $d-1$ for large $n$, the second summand satisfies a bound of the form
\begin{equation}
q^{-n} \sum_{j=1}^{2n} (-1)^j \tr(\Frob | H^{2n-j}_{c, \et}(\Hn_{G, n}^c/\Fqbar, \Q_\ell))) \leq C q^{-1/2} n^{d-1}.
\label{eq:secondsummand}
\end{equation}

The new ingredient in this proof is to prove a lower bound for the first summand of \eqref{eq:split}.  We note that
\beq
q^{-n}(\tr(\Frob | H^{2n}_{c, \et}(\Hn_{G, n}^c/\Fqbar, \Q_\ell)))
\eeq
is precisely the trace of Frobenius in its permutation action on the geometrically connected components of $\Hn_{G, n}^c$.  We now show that we can make this trace large by ensuring that many of the components are fixed by Frobenius, as long as we make $q-1$ sufficiently divisible.

First of all, we recall that the geometrically connected components of $\Hn_{G, n}^c$ are precisely the orbits of the braid group on $n$-tuples
\beq
(g_1, \ldots, g_n)
\eeq
with each $g_i$ lying in $c$.  We will restrict our attention to those $n$-tuples satisfying
\begin{equation}
g_1 \cdot \ldots \cdot g_n = 1
\label{prod1}
\end{equation}
 a subset easily seen to be preserved by the braid action.  This set of components will already be enough to obtain a bound of the desired quality.

Write $c_1, \ldots, c_d$ for the conjugacy classes making up $c$.  As explained in \S \ref{ss:hurwitzschemes}, any $n$-branched $G$-cover of $\A^1$ has a local monodromy type, which is the $d$-tuple of natural numbers $\mathbf{n} = n_1, \ldots, n_d$ where $n_i$ is the number of branch points of type $c_i$.  Wood, in \cite{mmw:liftinginvariant}, defines a {\em lifting invariant} of $n$-branched covers satisfying \eqref{prod1}, valued in a Galois module $U'(G,c)$, which is constant on any geometrically connected component of $\Hn_{G, n}^c$ by \cite[\S 6.2]{mmw:liftinginvariant} and which is equivariant with the action of Galois on geometrically connected components~\cite[\S 6.1]{mmw:liftinginvariant}.  (In the notation of \cite{mmw:liftinginvariant}, $U'(G,c)$ is the kernel of the homomorphism $U(G,c) \ra G$.)

 More precisely:  $\Gal(\bar{\Q}/\Q)$ acts on the set $c_1, \ldots, c_d$ through the cyclotomic character $\chi: \Gal(\bar{\Q}/\Q) \ra \Zhat^*$:  a Galois element $\sigma$ sends $c_i$ to $c_i^{\chi(\omega)}$.  The free abelian group generated by the $c_i$ thus acquires the structure of a Galois module, which we call $A$.  There is a Galois-equivariant homomorphism $\pi: U'(G,c) \ra A$ with finite-index image $A_0$, whose kernel is a finite Galois module~\cite[\S 2, \S 5]{mmw:liftinginvariant}.  (As you might expect, if $u$ is the lifting invariant of a branched cover, $\pi(u)$ is its local monodromy type.)

The Galois action on $U'(G,c)$ also factors through the cyclotomic character. More specifically, if $\sigma$ is a Galois element fixing the $|G|^2$ roots of unity, $\sigma$ acts trivially on $U'(G,c)$~\cite[Remark 4.1]{mmw:liftinginvariant}.  In particular, if $q-1$ is a multiple of $|G|^2$, Frobenius acts trivially on $U'(G,c)$.

Now Theorem 3.1 of \cite{mmw:liftinginvariant} tells us that there is a constant $M(G,c)$ such that, for any element $a$ of $A_0$ in which each $c_i$ appears with multiplicity at least $M(G,c)$, the geometrically connected components of Hurwitz space with local monodromy type $a$ are actually in bijection with the elements of $U(G,c)$ lying over $a$.  In particular, once $|G|^2 | q-1$, this means that the geometrically connected components of Hurwitz space with local monodromy type $a$ are all defined over $\F_q$.  Thus, the number of $\F_q$-rational geometrically connected components of $\Hn_{G, n}^c$ is at least as large as the number of elements of $A_0$ with coordinates summing to $n$ all coordinates at least $M(G,c)$; this number, in turn, is bounded below by a constant multiple of $n^{d-1}$ for $n$ large enough.

We conclude that
\beq
q^{-n}(\tr(\Frob | H^{2n}_{c, \et}(\Hn_{G, n}^c/\Fqbar, \Q_\ell))) \geq C'' n^{d-1}
\eeq
for some positive constant $C''$ and all sufficiently large $n$. Combining this with \eqref{eq:secondsummand} and making $q$ large enough that $C q^{-1/2} n^{d-1} < C''$, we arrive at the desired result
\beq
\frac{\# \Hn_{G, n}^c(\F_q)}{ q^n} \geq C' n^{d-1}.
\eeq
\end{proof}

The conditions of Corollary~\ref{co:lowerbounds} are somewhat restrictive, but the result does provide lower bounds for some natural counting problems in arithmetic statistics over function fields.  In particular, we are able to give a growth rate for the number of quartics with Galois group $A_4$ and discriminant at most $X$, over function fields containing a cube root of unity.  Malle's conjecture would predict that the number of such quartics would be asymptotic to a multiple of $X^{1/2} (\log X)^2$.  The best known lower bound for number fields is on order $X^{1/2}$, due to Baily and to Alberts~\cite{alberts:solvable}, while the best known upper bound (proved for $A_4$-extensions of $\Q$ only) is on order $X^{0.7784 \ldots}$.\cite{bstttz}.  Over $\F_q(t)$, when $q$ is $1$ mod $3$, we can come within a constant of the Malle prediction.

\begin{prop} Let $N_{A_4}(\F_q(t);X)$ be the number of quartic extensions of $\F_q(t)$ whose Galois group is contained in $A_4$ and whose discriminant is at most $X$.  Then, for all sufficiently large odd $q$ congruent to $1$ mod $3$,
\beq
C_1 X^{1/2} (\log X)^2 \leq N_{A_4}(\F_q(t);X) \leq C_2 X^{1/2} (\log X)^2.
\eeq
\end{prop}

\begin{proof}
We note first of all that all three nontrivial conjugacy classes in $A_4$ have index $2$.  To show that the conditions of Corollary~\ref{co:lowerbounds} are satisfied, we need to check that no subgroup of $A_4$ has more than three nontrivial conjugacy classes; this is easy to check exhaustively, with the Klein $4$-group being the only proper subgroup that even has as many as three nontrivial conjugacy classes.  So Corollary~\ref{co:lowerbounds} applies with $d=3$ and $w$ the constant function $2$.  

All that remains is to check that the condition ``$q-1$ sufficiently divisible" can be made explicit as ``$q-1$ is a multiple of $3$."  Divisibility conditions on $q-1$ appear in two places in the proof of Corollary~\ref{co:lowerbounds}.  First, we need Frobenius to act trivially on the conjugacy classes making up $c$; to be precise, these are the conjugacy classes of $(123)$, of $(132)$, and of $(12)(34)$.  The Frobenius action of raising to the $q$ power always fixes the double-flip class; when $3|q-1$ it also fixes the two $3$-cycle classes.

What remains is to show that when $3|q-1$, Frobenius acts trivially on $U'(G,c)$.  For this, we need to look at the construction of $U'(G,c)$ a little more closely, following \S 2 of \cite{mmw:liftinginvariant}.  The Schur multiplier $H_2(A_4,\Z)$ of $A_4$ is a group of order $2$, and a central extension of $A_4$ realizing that Schur multiplier is $\SL_2(\F_3) \ra \PSL_2(\F_3) \cong A_4$.  One checks that the commutator of
\beq
\mat{1}{0}{0}{-1}, \mat{0}{1}{1}{0}
\eeq
in $\SL_2(\F_3)$ is $-I$, and thus generates the whole Schur multiplier; moreover, both of those involutions map to elements of $c$ in $A_4$; thus, the reduced Schur cover $S_c$ defined before Lemma 2.2 of \cite{mmw:liftinginvariant} is isomorphic to $G$.  Given that fact, \cite[Thm 2.5]{mmw:liftinginvariant} gives $U'(G,c)$ a very explicit description: the map $U'(G,c) \ra A$ is an isomorphism onto its image, which in this case is just the kernel of the homomorphism from $A$ to $\Z/3\Z$ which sends $(123)$ to $1$, $(132)$ to $2$, and $(12)(34)$ to $0$.  Combinatorially speaking, we are just saying that the braid group acts {\em transitively} on $n$-tuples $(g_1, \ldots, g_n)$ with a given local monodromy type and satisfying $g_1 \cdot \ldots \cdot g_n = 1$, so long as the local monodromy type contains each nontrivial conjugacy class of $A_4$ sufficiently many times.

In particular, the action of Frobenius on $U'(G,c)$ is trivial once $3 | q-1$.   This completes the proof.

\end{proof}

\bibliography{biblio}

\end{document}